\def\tank#1{\protected@xdef\@thanks{\@thanks
        \protect\footnotetext[0]{#1}}}
\def\bigfoot{

    \@footnotetext}
\newcommand{\ea}{\end{array}}
\newtheorem{theorem}{Theorem}[section]
\newtheorem{lem}{Lemma}[section]
\newtheorem{prp}[theorem]{Proposition}
\newtheorem{thm}[theorem]{Theorem}
\newtheorem{dfn}[theorem]{Definition}
\newtheorem{exmp}[theorem]{Example}
\newtheorem{remark}{Remark}
\newtheorem{con}{Condition}[section]
\title{Large Deviations for SPDEs of Jump Type}
\author{ Xue Yang \quad Jianliang Zhai \quad  Tusheng Zhang \\ School of Mathematics, University of Manchester, \\Oxfor Road, Manchester, M13 9PL, UK}
\date{}
\newenvironment{proof}{\par\noindent{\bf Proof:}}{\hspace*{\fill}$\blacksquare$\par}
\begin{document}
\maketitle

\begin{minipage}{140mm}
\begin{center}
{\bf Abstract}
\end{center}
In this paper, we establish a large deviation principle for a fully non-linear stochastic evolution equation driven by both Brownian motions and Poisson random measures on a given Hilbert space $H$. The weak convergence
method plays an important role.
\end{minipage}

\vspace{4mm}

\noindent \textbf{AMS Subject Classification}:  Primary 60H15   Secondary 35R60, 37L55.

\vspace{3mm}
\noindent \textbf{Key Words:} Large deviations; Stochastic partial differential equations; Poission random measures; Brownian motions; Tightness of measures
\section{Introduction}

In this paper, we are concerned with large deviation principles for stochastic evolution equations (stochastic partial differential equations (SPDEs) in particular) of jump type on some Hilbert space $H$:
\begin{eqnarray}\label{SPDE-01}
   X^\epsilon_t
       =
        X^\epsilon_0-\int^t_0\mathcal{A}(X^\epsilon_s)ds+\sqrt{\epsilon}\int_{0}^{t}\sigma(s,X^{\epsilon}_{s})d\beta(s)
        +\epsilon\int^t_0\int_\mathbb{X}G(s,X^\epsilon_{s-},v)\widetilde{N}^{\epsilon^{-1}}(dsdv).
\end{eqnarray}

Here $\mathcal{A}$ is an (normally unbounded) linear operator on $H$, $\mathbb{X}$ is a locally compact Polish space. $\beta=(\beta_{i})_{i=1}^{\infty}$ is an i.i.d. family of
standard Brownian motions.  $N^{\epsilon^{-1}}$ is a Poisson random measure on $[0,T]\times\mathbb{X}$ with a
$\sigma$-finite
mean measure $\epsilon^{-1}\lambda_T\otimes \nu$, $\lambda_T$ is the Lebesgue measure on $[0,T]$ and $\nu$ is a $\sigma-$finite measure on $\mathbb{X}$.
$\widetilde{N}^{\epsilon^{-1}}([0,t]\times B)=N^{\epsilon^{-1}}([0,t]\times B)-\epsilon^{-1}t\nu(B)$,
$\forall B\in \mathcal{B}(\mathbb{X})$ with $\nu(B)<\infty$, is the compensated Poisson random measure.
\vskip 0.3cm
Large deviations for stochastic evolution equations and stochastic
 partial differential equations driven by Gaussian processes have been
investigated in many papers, see e.g.\ \cite{CW}, \cite{CM2},\cite{CR}, \cite{S}, \cite{Z}. The situations for stochastic evolution equations and stochastic
 partial differential equations driven by L\'e{}vy noise are drastically different because of the appearance of the jumps. There is not much work on this topic so far. The first paper on large deviations of SPDEs of jump type is \cite{Rockner-Zhang} where the additive noise is considered. The case of multiplicative  L\'e{}vy noise is studied in \cite{Swiech-Zabczyk} where the large deviation was obtained on a larger space ( hence with a weaker topology ) than the actual state space of the solution. Recently, a new approach to large deviations of measurable maps of Poisson random measures (PRM) and Brownian motion (BM) was  introduced in \cite{Budhiraja-Dupuis-Maroulas.} based on variational representations of certain functionals of PRM and BM.
 One of the key elements in this approach
is to prove the weak convergence of  random perturbations of  the corresponding equations. So the underline topology is a very important factor to consider when establishing large deviations. In the new preprint \cite{Budhiraja-Chen-Dupuis}, the authors applied the criteria in \cite{Budhiraja-Dupuis-Maroulas.} to obtain  a large deviation principle
for stochastic partial differential equations driven by Poisson random measures on some nuclear spaces where tightness of measures are relatively easy to establish. Often the real state space of the solution of a stochastic partial differential equation is a smaller Hilbert space contained in the nuclear space. This makes it interesting to directly consider large deviations on  the actual state space.
\vskip 0.3cm
 The aim of this paper is to establish a large deviation principle for a fully non-linear stochastic evolution equation
driven by both Brownian motions and Poisson random measures like (\ref{SPDE-01}) on a given Hilbert space $H$. We will apply the  criteria in \cite{Budhiraja-Dupuis-Maroulas.}. Among other things , we need to  prove the tightness of the solutions of random perturbations of the equation (\ref{SPDE-01}) on the space $D([0,T]; H)$. To this end, we  split the time interval $[0,T]$ into $[0, t_0]$ and $[t_0, T]$ for a given arbitrarily small positive constant $t_0>0$ because two different treatments are needed for these two intervals. This also make the proofs involved.
\vskip 0.3cm
Finally we mention that  large
deviations for  L\'e{}vy processes on Banach spaces and large
deviations for solutions of stochastic differential equations driven
by Poisson measures in finite dimensions were studied in \cite{A1}, \cite{A2}.
\vskip 0.3cm
The rest of the paper is organized as follows. In Section 2, we recall the general criteria of large deviations obtained in \cite{Budhiraja-Dupuis-Maroulas.} and formulate precisely the stochastic evolution equations we are going to study. Section 3 is devoted to the proof of the large deviation principle. A number of preparing propositions and lemmas will be  proved in this section.

\vskip 0.3cm

We end this section with some notations. For a topological space $\mathcal{E}$, denote the corresponding Borel
$\sigma$-field by $\mathcal{B(\mathcal{E})}$. We will use the symbol $"\Longrightarrow"$ to denote convergence
in distribution. Let $\mathbb{N},\ \mathbb{N}_0,\ \mathbb{R},\ \mathbb{R}_+,\ \mathbb{R}^d$ denote the set of
positive integers, non-negative integers, real numbers, positive real numbers, and d-dimensional real
vectors respectively. For a Polish space $\mathbb{X}$, denote by $C([0,T],\mathbb{X})$, $D([0,T],\mathbb{X})$
the space of continuous functions and right continuous functions with left limits from [0,T] to $\mathbb{X}$
respectively.
For a metric space $\mathcal{E}$, denote by $M_b(\mathcal{E})$, $C_b(\mathcal{E})$ the space of real valued bounded
$\mathcal{B}(\mathcal{E})/\mathcal{B}(\mathbb{R})$-measurable maps and real valued  bounded continuous functions
respectively.
For $p>0$, a measure $\nu$ on $\mathcal{E}$, and a Hilbert space $H$, denote by $L^p(\mathcal{E},\nu;H)$ the space of
measurable
functions $f$ from $\mathcal{E}$ to $H$ such that $\int_{\mathcal{E}}\|f(v)\|^p\nu(dv)<\infty$, where $\|\cdot\|_{H}$ is
the
norm on $H$. For a function $x:[0,T]\rightarrow\mathcal{E}$, we use the notation $x_t$ and $x(t)$ interchangeably for
the evaluation of $x$ at $t\in[0,T]$. Similar convention will be followed for stochastic processes. We say a
collection $\{X^\epsilon\}$
of $\mathcal{E}$-valued random variables is tight if the probability distributions of $X^\epsilon$ are tight in
$\mathcal{P}(\mathcal{E})$
(the space of probability measures on $\mathcal{E}$).

\section{Preliminaries}
In the first part of this section, we will recall the general criteria for a large deviation principle given in \cite{Budhiraja-Dupuis-Maroulas.}. To this send,  we closely follow the framework and the notations  in \cite{Budhiraja-Chen-Dupuis} and \cite{Budhiraja-Dupuis-Maroulas.}. In the second part, we will precisely formulate
the stochastic evolution equations we will study.

\subsection{Large Deviation Principle}

Let $\{X^\epsilon,\epsilon>0\}\equiv\{X^\epsilon\}$ be a family of random variables defined on a probability space
$(\Omega,\mathcal{F},\mathbb{P})$
and taking values in a Polish space (i.e., a complete separable metric space) $\mathcal{E}$. Denote expectation with
respect to $\mathbb{P}$ by $\mathbb{E}$.
The theory of large deviations is concerned with events $A$ for which probability $\mathbb{P}(X^\epsilon\in A)$
converge to
zero exponentially fast as $\epsilon\rightarrow 0$. The exponential decay rate of such probabilities is typically
expressed
in terms of a ``rate function" $I$ mapping $\mathcal{E}$ into $[0,\infty]$.
    \begin{dfn}\label{Dfn-Rate function}
       \emph{\textbf{(Rate function)}} A function $I: \mathcal{E}\rightarrow[0,\infty]$ is called a rate function on
       $\mathcal{E}$,
       if for each $M<\infty$ the level set $\{x\in\mathcal{E}:I(x)\leq M\}$ is a compact subset of $\mathcal{E}$.
       For
       $A\in \mathcal{B}(\mathcal{E})$,
       we define $I(A)\doteq\inf_{x\in A}I(x)$.
    \end{dfn}
    \begin{dfn}
       \emph{\textbf{(Large deviation principle)}} Let $I$ be a rate function on $\mathcal{E}$. The sequence
       $\{X^\epsilon\}$
       is said to satisfy the large deviation principle on $\mathcal{E}$ with rate function $I$ if the following two
       conditions
       hold.

         a. Large deviation upper bound. For each closed subset $F$ of $\mathcal{E}$,
              $$
                \limsup_{\epsilon\rightarrow 0}\epsilon\log\mathbb{P}(X^\epsilon\in F)\leq-I(F).
              $$

         b. Large deviation lower bound. For each open subset $G$ of $\mathcal{E}$,
              $$
                \limsup_{\epsilon\rightarrow 0}\epsilon\log\mathbb{P}(X^\epsilon\in G)\geq-I(G).
              $$
    \end{dfn}

If a sequence of random variables satisfies a large deviation principle with some rate function,
then the rate function is unique.

\subsection{Poisson Random Measure and Brownian Motion}\label{Section Representation}

\subsubsection{Poisson Random Measure}
\label{Poisson Random Measure}
Let $\mathbb{X}$ be a locally compact Polish space. Let $\mathcal{M}_{FC}(\mathbb{X})$ be the space of all
measures $\nu$ on $(\mathbb{X},\mathcal{B}(\mathbb{X}))$ such that $\nu(K)<\infty$ for every compact $K$ in
$\mathbb{X}$. Endow $\mathcal{M}_{FC}(\mathbb{X})$ with the weakest topology such that for every $f\in
C_c(\mathbb{X})$
(the space of continuous functions with compact supports), the function
$\nu\rightarrow\langle f,\nu\rangle=\int_{\mathbb{X}}f(u)d\nu(u),\nu\in\mathcal{M}_{FC}(\mathbb{X})$ is continuous.
This topology can be metrized such that $\mathcal{M}_{FC}(\mathbb{X})$ is a Polish space (see e.g.
\cite{Budhiraja-Dupuis-Maroulas.}).
Fix $T\in(0,\infty)$ and let $\mathbb{X}_T=[0,T]\times\mathbb{X}$. Fix a measure
$\nu\in\mathcal{M}_{FC}(\mathbb{X})$,
and let $\nu_T=\lambda_T\otimes\nu$, where $\lambda_T$ is Lebesgue measure on $[0,T]$.

We recall that a Poisson random measure $\textbf{n}$ on $\mathbb{X}_T$ with mean measure (or intensity measure)
$\nu_T$ is a $\mathcal{M}_{FC}(\mathbb{X}_T)$ valued random variable such that for each
$B\in\mathcal{B}(\mathbb{X}_T)$
with $\nu_T(B)<\infty$, $\textbf{n}(B)$ is Poisson distributed with mean $\nu_T(B)$ and for disjoint
$B_1,\cdots,B_k\in\mathcal{B}(\mathbb{X}_T)$, $\textbf{n}(B_1),\cdots,\textbf{n}(B_k)$ are mutually independent
random
variables (cf. \cite{Ikeda-Watanabe}). Denote by $\mathbb{P}$ the measure induced by $\textbf{n}$ on
$(\mathcal{M}_{FC}(\mathbb{X}_T),\mathcal{B}(\mathcal{M}_{FC}(\mathbb{X}_T)))$.
Then letting $\mathbb{M}=\mathcal{M}_{FC}(\mathbb{X}_T)$, $\mathbb{P}$ is the unique probability measure on
$(\mathbb{M},\mathcal{B}(\mathbb{M}))$
under which the canonical map, $N:\mathbb{M}\rightarrow\mathbb{M},\ N(m)\doteq m$, is a Poisson random measure with
intensity measure $\nu_T$. With applications to large deviations in mind, we also consider, for $\theta>0$,
probability
measures $\mathbb{P}_\theta$ on $(\mathbb{M},\mathcal{B}(\mathbb{M}))$ under which $N$ is a Poissson random measure
with intensity $\theta\nu_T$. The corresponding expectation operators will be denoted by $\mathbb{E}$ and
$\mathbb{E}_\theta$,
respectively.



Let $\mathbb{Y}=\mathbb{X}\times[0,\infty)$ and $\mathbb{Y}_T=[0,T]\times\mathbb{Y}$. Let
$\bar{\mathbb{M}}=\mathcal{M}_{FC}(\mathbb{Y}_T)$
and let $\bar{\mathbb{P}}$ be the unique probability measure on $(\bar{\mathbb{M}},\mathcal{B}(\bar{\mathbb{M}}))$
under which the canonical map, $\bar{N}:\bar{\mathbb{M}}\rightarrow\bar{\mathbb{M}},\bar{N}(m)\doteq m$, is a Poisson
random
measure with intensity measure $\bar{\nu}_T=\lambda_T\otimes\nu\otimes \lambda_\infty$, with $\lambda_\infty$ being
Lebesgue measure on $[0,\infty)$.
The corresponding expectation operator will be denoted by $\bar{\mathbb{E}}$. Let
$\mathcal{F}_t\doteq\sigma\{\bar{N}((0,s]\times A):0\leq s\leq t,A\in\mathcal{B}(\mathbb{Y})\},$ and let
$\bar{\mathcal{F}}_t$
denote the completion under $\bar{\mathbb{P}}$. We denote by $\bar{\mathcal{P}}$ the predictable $\sigma$-field on
$[0,T]\times\bar{\mathbb{M}}$
with the filtration $\{\bar{\mathcal{F}}_t:0\leq t\leq T\}$ on $(\bar{\mathbb{M}},\mathcal{B}(\bar{\mathbb{M}}))$.
Let
$\bar{\mathcal{A}}$
be the class of all $(\bar{\mathcal{P}}\otimes\mathcal{B}(\mathbb{X}))/\mathcal{B}[0,\infty)$-measurable maps
$\varphi:\mathbb{X}_T\times\bar{\mathbb{M}}\rightarrow[0,\infty)$. For $\varphi\in\bar{\mathcal{A}}$, define a
counting process $N^\varphi$ on
$\mathbb{X}_T$ by
   \begin{eqnarray}\label{Jump-representation}
      N^\varphi((0,t]\times U)=\int_{(0,t]\times U}\int_{(0,\infty)}1_{[0,\varphi(s,x)]}(r)\bar{N}(dsdxdr),\
      t\in[0,T],U\in\mathcal{B}(\mathbb{X}).
   \end{eqnarray}

$N^\varphi$ is  the controlled random measure, with $\varphi$ selecting the intensity for the points at location
$x$
and time $s$, in a possibly random but non-anticipating way. When $\varphi(s,x,\bar{m})\equiv\theta\in(0,\infty)$, we
write $N^\varphi=N^\theta$. Note that $N^\theta$ has the same distribution with respect to $\bar{\mathbb{P}}$ as $N$
has with respect to $\mathbb{P}_\theta$.

\subsubsection{PRM and BM}
\label{section-PRM AND BM}
Denote $C([0,T],\mathbb{R}^{\infty})$ by $\mathbb{W}$, where $\mathbb{R}^{\infty}$ is the infinite
product space of the real line $\mathbb{R}$ and endowed with the product topology. Let
$\mathbb{V}=\mathbb{W}\times \mathbb{M}$ . Then let the  mapping $N: \mathbb{V}\rightarrow \mathbb{M}$  be defined by
$N(w,m)=m$ for $(w,m)\in  \mathbb{V}$, and let $\beta=(\beta_{i})_{i=1}^{\infty}$ be defined by $\beta_{i}(w,m)=w_{i}$
for
$(w,m)\in  \mathbb{V}$. Define the $\sigma-$ filtration $\mathcal{G}_{t}:=\sigma\{N((0,s]\times A),\beta_{i}(s):
0\leq
s\leq t, A\in \mathcal{B}(\mathbb{Y}), i\geq 1\}$. For every $\theta>0$, $\mathbb{P}_{\theta}$ denotes the unique
probability measure on $(\mathbb{V},\mathcal{B}(\mathbb{V}))$ such that :\\
(a) $(\beta_{i})_{i=1}^{\infty}$  is an i.i.d. family of standard Brownian motions.\\
(b) $N$ is a PRM with intensity measure $\theta\nu_{T}$.\\

If controlled Poisson random measure is also considered, we set $\bar{\mathbb{V}}:=\mathbb{W}\times
\bar{\mathbb{M}}$,
and let the mapping  $\bar{N}: \bar{\mathbb{V}}\rightarrow \bar{\mathbb{M}}$ be defined by
$\bar{N}(\bar{w},\bar{m})=\bar{m}$ for $(\bar{w},\bar{m})\in  \bar{\mathbb{V}}$ accordingly. Analogously, we define
$(\bar{\mathbb{P}}_{\theta},\bar{\mathcal{G}}_{t})$. We denote by $\{\bar{\mathcal{F}}_{t}\}$ the
$\bar{\mathbb{P}}-$completion of $\{\bar{\mathcal{G}}_{t}\}$ and $\bar{\mathcal{P}}$ the predictable $\sigma-$filed
on
$[0,T]\times \bar{\mathbb{V}}$ with the filtration $\{\bar{\mathcal{F}}_{t}\}$ on
$(\bar{\mathbb{V}},B(\bar{\mathbb{V}}))$.  Let $\bar{\mathcal{A}}$
be the class of all $(\bar{\mathcal{P}}\otimes\mathcal{B}(\mathbb{X}))/\mathcal{B}[0,\infty)$-measurable maps
$\varphi:\mathbb{X}_T\times\bar{\mathbb{V}}\rightarrow[0,\infty)$.
Define $l:[0,\infty)\rightarrow[0,\infty)$ by
    $$
    l(r)=r\log r-r+1,\ \ r\in[0,\infty).
    $$

For any $\varphi\in\bar{\mathcal{A}}$ the quantity
    \begin{eqnarray}\label{L_T}
      L_T(\varphi)=\int_{\mathbb{X}_T}l(\varphi(t,x,\omega))\nu_T(dtdx)
    \end{eqnarray}
is well defined as a $[0,\infty]$-valued random variable.

Let $H$ be a separable Hilbert space.

Define function space
\begin{eqnarray}
\mathcal{L}_{2}:=\{\psi: \psi\; \text{is}\; \bar{\mathcal{P}}\setminus
\mathcal{B}(H)\; \text{measurable and} \; \int_{0}^{T}\|\psi(s)\|_{H}^{2}\;ds<\infty,a.s.-
\bar{\mathbb{P}}\}.
\end{eqnarray}

Set $\mathcal{U}=\mathcal{L}_{2}\times \bar{\mathcal{A}}$. Define
$\tilde{L}_{T}(\psi):=\frac{1}{2}\int_{0}^{T}\|\psi(s)\|_{H}^{2}\;ds$ for $\psi\in \mathcal{L}_{2}$,
and $\bar{L}_{T}(u):=\tilde{L}_{T}(\psi)+L_{T}(\varphi)$ for $u=(\psi,\varphi)$.

\subsection{A General Criteria}

In this section, we recall a general criteria for a large deviation principle established in \cite{Budhiraja-Dupuis-Maroulas.}. Let
$\{\mathcal{G}^\epsilon\}_{\epsilon>0}$
be a family of measurable maps from $\bar{\mathbb{V}}$ to $\mathbb{U}$, where $\bar{\mathbb{V}}$ is introduced in Section $\ref{Poisson Random Measure}$ and $\mathbb{U}$ is some Polish space. We present
below a sufficient condition for large deviation principle (LDP in abbreviation) to hold for the family
$Z^\epsilon=\mathcal{G}^\epsilon(\sqrt{\epsilon}\beta, \epsilon N^{\epsilon^{-1}})$,
as $\epsilon\rightarrow 0$.

Define
   \begin{eqnarray}\label{S_N}
     S^N=\{g:\mathbb{X}_T\rightarrow[0,\infty):\,L_T(g)\leq N\},
   \end{eqnarray}
and
\begin{eqnarray}
 \tilde{S}^N=\{f:L^{2}([0,T]:H):\,\tilde{L}_T(f)\leq N\}.
\end{eqnarray}

A function $g\in S^N$ can be identified with a measure $\nu_T^g\in\mathbb{M}$, defined by
   \begin{eqnarray*}
      \nu_T^g(A)=\int_A g(s,x)\nu_T(dsdx),\ \ A\in\mathcal{B}(\mathbb{X}_T).
   \end{eqnarray*}

This identification induces a topology on $S^N$ under which $S^N$ is a compact space, see the Appendix of
\cite{Budhiraja-Chen-Dupuis}.
Throughout we use this topology on $S^N$. Set $\bar{S}^{N}=\tilde{S}^{N}\times S^{N}$. Define
$\mathbb{S}=\bigcup_{N\geq 1}\bar{S}^N$, and let
    $$
      \mathcal{U}^N=\{u=(\psi,\varphi)\in \mathcal{U}: u(\omega)\in \bar{S}^N,\bar{\mathbb{P}}\ a.e.\ \omega\},
    $$
where $\mathcal{U}$ is introduced in Section $\ref{section-PRM AND BM}$.

The following condition will be sufficient to establish a LDP for a family $\{Z^\epsilon\}_{\epsilon>0}$ defined
by $Z^\epsilon=\mathcal{G}^\epsilon(\sqrt{\epsilon}\beta, \epsilon N^{\epsilon^{-1}})$.

\begin{con}\label{LDP}
 There exists a measurable map $\mathcal{G}^0:\mathbb{V}\rightarrow \mathbb{U}$ such that the following hold.

 a. For $N\in\mathbb{N}$, let $(f_{n},g_n),\ (f,g)\in \bar{S}^N$ be such that $(f_{n},g_n)\rightarrow (f,g)$ as
 $n\rightarrow\infty$. Then
      $$
         \mathcal{G}^0(\int_{0}^{\cdot}f_{n}(s)ds, \nu_T^{g_n})\rightarrow \mathcal{G}^0(\int_{0}^{\cdot}f(s)ds,
         \nu_T^{g})\quad\text{in}\quad \mathbb{U}.
      $$

 b. For $N\in\mathbb{N}$, let $u_{\epsilon}=(\psi_{\epsilon},\varphi_\epsilon),\ u=(\psi,\varphi)\in\mathcal{U}^N$ be
 such that $u_\epsilon$
 converges in distribution to $u$ as $\epsilon\rightarrow 0$. Then
      $$
         \mathcal{G}^\epsilon(\sqrt{\epsilon}\beta+\int_{0}^{\cdot}\psi_{\epsilon}(s)ds, \epsilon
         N^{\epsilon^{-1}\varphi_\epsilon})\Rightarrow
         \mathcal{G}^0(\int_{0}^{\cdot}\psi(s)ds,\nu_T^{\varphi}).
      $$
\end{con}


For $\phi\in\mathbb{U}$, define $\mathbb{S}_\phi=\{(f,g)\in\mathbb{S}:\phi=\mathcal{G}^{0}(\int_{0}^{\cdot}f(s)ds,
\nu^g_T)\}$. Let $I:\mathbb{U}\rightarrow[0,\infty]$
be defined by
     \begin{eqnarray}\label{Rate function I}
        I(\phi)=\inf_{q=(f,g)\in\mathbb{S}_\phi}\{\bar{L}_T(q)\},\ \ \phi\in\mathbb{U}.
     \end{eqnarray}
By convention, $I(\phi)=\infty$ if $\mathbb{S}_\phi=\emptyset$.

The following criteria was established in \cite{Budhiraja-Dupuis-Maroulas.}.

\begin{thm}\label{LDP-main}
For $\epsilon>0$, let $Z^\epsilon$ be defined by $Z^\epsilon=\mathcal{G}^\epsilon(\sqrt{\epsilon}\beta,\epsilon
N^{\epsilon^{-1}})$, and suppose
that Condition \ref{LDP} holds. Then $I$ defined as in (\ref{Rate function I}) is a rate function on $\mathbb{U}$ and
the family $\{Z^\epsilon\}_{\epsilon>0}$ satisfies a large deviation principle with rate function $I$.
\end{thm}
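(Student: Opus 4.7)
The plan is to deduce the large deviation principle from the Laplace principle via the standard equivalence (valid because $I$ will be shown to be a good rate function), and to establish the Laplace principle using the variational representation for nonnegative functionals of $(\beta, N^{\epsilon^{-1}})$ proved in \cite{Budhiraja-Dupuis-Maroulas.}. Concretely, for $F \in M_b(\mathbb{U})$ one has
\begin{equation*}
-\epsilon \log \mathbb{E}\bigl[\exp(-F(Z^\epsilon)/\epsilon)\bigr]
= \inf_{u=(\psi,\varphi)\in\mathcal{U}} \mathbb{E}\Bigl[\epsilon\bar{L}_T(u/\epsilon) + F\bigl(\mathcal{G}^\epsilon(\sqrt{\epsilon}\beta+\textstyle\int_0^\cdot\psi(s)ds,\,\epsilon N^{\epsilon^{-1}\varphi})\bigr)\Bigr],
\end{equation*}
so the problem reduces to showing that the right-hand side converges to $\inf_{(f,g)\in\mathbb{S}} \{\bar{L}_T(f,g) + F(\mathcal{G}^0(\int_0^\cdot f(s)ds,\nu_T^g))\}$.

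First I would verify that $I$ is a good rate function. Because $\bar{L}_T$ has compact sublevel sets $\bar{S}^N$ in $L^2([0,T];H)\times S^N$ (the $S^N$ compactness being recorded in the Appendix of \cite{Budhiraja-Chen-Dupuis} and the $\tilde{S}^N$ compactness following from weak $L^2$ compactness), condition (a) says that the map $(f,g)\mapsto \mathcal{G}^0(\int_0^\cdot f(s)ds, \nu_T^g)$ is continuous on each $\bar{S}^N$. Hence $\{I\leq M\}$ is the continuous image of the compact set $\bar{S}^{\lceil M\rceil}$ intersected with the closed preimage of $\phi$, which gives compactness of the sublevel sets after a standard diagonal argument.

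For the Laplace upper bound, starting from near-minimizers $u_\epsilon = (\psi_\epsilon,\varphi_\epsilon)$ for the variational expression, a truncation argument (bounding $\bar L_T(u_\epsilon)$ uniformly in $\epsilon$) restricts attention to $u_\epsilon \in \mathcal{U}^N$ for some $N$. By the compactness of $\bar S^N$ one extracts a subsequential weak limit $u$, and condition (b) lets one pass to the limit inside $F$, while lower semicontinuity of $\bar L_T$ under the topology on $\bar S^N$ controls the cost term from below; this yields $\liminf_\epsilon(\cdot) \geq \inf_{(f,g)}\{\bar L_T(f,g) + F(\mathcal{G}^0(\int_0^\cdot f ds, \nu_T^g))\}$. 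For the lower bound, given $\delta>0$ I pick a near-optimal deterministic $q=(f,g)\in\mathbb{S}$ and take it as the constant control $u_\epsilon \equiv q$; condition (b) then furnishes convergence of the corresponding $Z^\epsilon$-like process to $\mathcal{G}^0(\int_0^\cdot f ds, \nu_T^g)$, producing the matching upper estimate on the variational expression.

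The main obstacle in applying the criterion is really hidden in condition (b) itself (verification is left to the later sections of this paper), but for the present statement the delicate point is handling the joint weak convergence of the mixed BM/PRM controls and relating it to the deterministic limit $\mathcal{G}^0$; this is exactly where the continuity provided by (a) and the weak-convergence behavior in (b) are pieced together. Once both Laplace bounds are established with the rate function $I$ defined by (\ref{Rate function I}), and since $I$ has been verified to be a good rate function, Varadhan's lemma together with Bryc's inverse lemma (or the direct argument in \cite{Budhiraja-Dupuis-Maroulas.}) yields the large deviation principle on $\mathbb{U}$, completing the proof.
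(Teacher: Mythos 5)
The paper does not actually prove this theorem: it is imported verbatim from \cite{Budhiraja-Dupuis-Maroulas.} (``The following criteria was established in...''), so there is no internal proof to compare against. Your outline reproduces the strategy of that reference --- variational representation for $-\epsilon\log\mathbb{E}[e^{-F(Z^\epsilon)/\epsilon}]$, compactness of the level sets $\bar S^N$ plus Condition \ref{LDP}(a) to get a good rate function, near-minimizing controls with tightness and Condition \ref{LDP}(b) for one Laplace bound, a near-optimal deterministic control for the other, then the Laplace principle/LDP equivalence --- so in substance you are following the same route as the cited source. One concrete point to repair: the cost term in your displayed representation should be $\bar L_T(u)$, not $\epsilon\bar L_T(u/\epsilon)$; since $l(r)=r\log r-r+1$ is not positively homogeneous (e.g.\ $\epsilon\, l(1/\epsilon)\to\infty$ while $l(1)=0$) and $\tilde L_T$ is quadratic, your expression does not reduce to the correct cost, and the correct scaling is exactly what makes the limit produce $I$ as defined in (\ref{Rate function I}). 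The level-set compactness argument also needs a cleaner statement (closedness of $\{I\le M\}$ via lower semicontinuity, plus containment in the compact continuous image $\mathcal{G}^0(\bar S^{M+1})$) rather than ``the closed preimage of $\phi$''.
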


For applications, the following strengthened form of Theorem \ref{LDP-main} is useful. Let $\{K_n\subset \mathbb{X},\
n=1,2,\cdots\}$
be an increasing sequence of compact sets such that $\cup _{n=1}^\infty K_n=\mathbb{X}$. For each $n$ let
   \begin{eqnarray*}
     \bar{\mathcal{A}}_{b,n}
          &\doteq&
              \{\varphi\in\bar{\mathcal{A}}:
                                   \ for\ all\ (t,\omega)\in[0,T]\times\bar{\mathbb{M}},\
                                   n\geq\varphi(t,x,\omega)\geq
                                   1/n\ if\ x\in K_n\ \\
                                  &&\ \ \ \ \ \ \ \ \ \ \ \ \ \ \ \ \ \ \ \ \ \ \ \ \ \ \ \ \ \ \ \ \ \ \ \ \ \ \ \ \
                                  \ \ \ \
                                       and\ \varphi(t,x,\omega)=1\ if\ x\in K_n^c
              \},
   \end{eqnarray*}
and let $\bar{\mathcal{A}}_{b}=\cup _{n=1}^\infty\bar{\mathcal{A}}_{b,n}$. Define
$\tilde{\mathcal{U}}^N=\mathcal{U}^N\cap\{(\psi,\phi): \phi\in\bar{\mathcal{A}}_b\}$.

\begin{thm}\label{LDP-main-01}
  Suppose Condition \ref{LDP} holds with $\mathcal{U}^N$ replaced by $\tilde{\mathcal{U}}^N$. Then the conclusions of
  Theorem \ref{LDP-main}
  continue to hold .
\end{thm}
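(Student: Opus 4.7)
The plan is to deduce Theorem \ref{LDP-main-01} from Theorem \ref{LDP-main} by a truncation argument. Since Condition \ref{LDP}(a) concerns only deterministic sequences in $\bar{S}^N$, it is unaffected by the replacement $\mathcal{U}^N\to\tilde{\mathcal{U}}^N$. Thus what I must upgrade is Condition \ref{LDP}(b): given $u_\epsilon=(\psi_\epsilon,\varphi_\epsilon)\in\mathcal{U}^N$ with $u_\epsilon$ converging in distribution to $u=(\psi,\varphi)\in\mathcal{U}^N$, the goal is to prove
$$
Y_\epsilon:=\mathcal{G}^\epsilon\!\Big(\sqrt{\epsilon}\beta+\int_0^\cdot\psi_\epsilon(s)\,ds,\,\epsilon N^{\epsilon^{-1}\varphi_\epsilon}\Big)\Longrightarrow \mathcal{G}^0\!\Big(\int_0^\cdot\psi(s)\,ds,\,\nu_T^{\varphi}\Big)=:Y,
$$
using only the same statement for controls in $\tilde{\mathcal{U}}^N$.

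For $m\in\mathbb{N}$ I would introduce the truncation
$$
\varphi_\epsilon^m(s,x,\omega)=\bigl((\varphi_\epsilon(s,x,\omega)\vee m^{-1})\wedge m\bigr)\mathbf{1}_{K_m}(x)+\mathbf{1}_{K_m^c}(x),
$$
and define $\varphi^m$ analogously from $\varphi$. Because $l$ is nonnegative, convex, and $l(1)=0$, pushing values toward $1$ can only decrease $l$, so $L_T(\varphi_\epsilon^m)\le L_T(\varphi_\epsilon)\le N$; hence $\varphi_\epsilon^m\in\bar{\mathcal{A}}_{b,m}$ and $u_\epsilon^m:=(\psi_\epsilon,\varphi_\epsilon^m)\in\tilde{\mathcal{U}}^N$, with $u^m\in\tilde{\mathcal{U}}^N$ likewise. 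A continuous-mapping argument in the compact topology of $\bar{S}^N$ described in \cite{Budhiraja-Chen-Dupuis} yields $u_\epsilon^m\Rightarrow u^m$ as $\epsilon\to 0$ and $\nu_T^{\varphi^m}\to\nu_T^{\varphi}$ as $m\to\infty$. The standing hypothesis applied to $u_\epsilon^m$ then gives, for each fixed $m$,
$$
Y_\epsilon^m:=\mathcal{G}^\epsilon\!\Big(\sqrt{\epsilon}\beta+\int_0^\cdot\psi_\epsilon\,ds,\,\epsilon N^{\epsilon^{-1}\varphi_\epsilon^m}\Big)\Longrightarrow Y^m:=\mathcal{G}^0\!\Big(\int_0^\cdot\psi\,ds,\,\nu_T^{\varphi^m}\Big),
$$
while Condition \ref{LDP}(a), applied with the constant sequence $f_n\equiv\psi$ and $g_n=\varphi^{m_n}$, delivers $Y^m\to Y$ as $m\to\infty$.

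It then remains to bridge $Y_\epsilon^m$ to $Y_\epsilon$, i.e.\ to establish
$$
\lim_{m\to\infty}\limsup_{\epsilon\to 0}\mathbb{P}\bigl(d_{\mathbb{U}}(Y_\epsilon,Y_\epsilon^m)>\delta\bigr)=0,\qquad\forall\,\delta>0.
$$
With this estimate available, a diagonal extraction $m=m(\epsilon)\uparrow\infty$ sufficiently slowly gives $Y_\epsilon\Rightarrow Y$, and Theorem \ref{LDP-main} supplies the LDP. This uniform-in-$\epsilon$ estimate is the main obstacle. By realising $N^{\epsilon^{-1}\varphi_\epsilon}$ and $N^{\epsilon^{-1}\varphi_\epsilon^m}$ on a common probability space through the representation (\ref{Jump-representation}), the difference of the two compensated integrals is supported on $K_m^c\cup(K_m\cap\{\varphi_\epsilon\notin[1/m,m]\})$; the bound $L_T(\varphi_\epsilon)\le N$ together with the superlinear growth of $l$ at $0$ and $\infty$ forces the $\epsilon\cdot(\epsilon^{-1}\nu_T)$-mass of this exceptional set to vanish uniformly in $\epsilon$ as $m\to\infty$. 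Feeding that bound into standard energy and Gronwall estimates for the controlled version of the SPDE (\ref{SPDE-01}) will close the argument; the careful bookkeeping between the $m$-truncation and the $\epsilon$-asymptotics is what makes this the delicate technical point, paralleling the proof in \cite{Budhiraja-Dupuis-Maroulas.}.
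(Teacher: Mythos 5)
The paper does not prove Theorem \ref{LDP-main-01}: it is recalled from \cite{Budhiraja-Dupuis-Maroulas.} (see also \cite{Budhiraja-Chen-Dupuis}), so there is no in-paper argument to compare with. Judged on its own terms, your strategy --- show that Condition \ref{LDP}(b) for $\tilde{\mathcal{U}}^N$ already implies Condition \ref{LDP}(b) for all of $\mathcal{U}^N$, and then invoke Theorem \ref{LDP-main} --- has a structural flaw. Theorem \ref{LDP-main-01} is a statement about an arbitrary family of measurable maps $\mathcal{G}^\epsilon:\bar{\mathbb{V}}\rightarrow\mathbb{U}$; at this level of generality there is no equation, so the ``standard energy and Gronwall estimates for the controlled version of the SPDE (\ref{SPDE-01})'' on which your bridge estimate $\lim_{m}\limsup_{\epsilon}\mathbb{P}(d_{\mathbb{U}}(Y_\epsilon,Y_\epsilon^m)>\delta)=0$ relies are simply not among the hypotheses. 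An arbitrary measurable $\mathcal{G}^\epsilon$ has no stability under perturbation of the control, so the intermediate claim you are trying to prove (the upgrade of Condition \ref{LDP}(b) from $\tilde{\mathcal{U}}^N$ to $\mathcal{U}^N$) cannot be established at the stated level of generality. The cited references instead argue directly on the variational representation of $-\epsilon\log\bar{\mathbb{E}}[\exp(-F(Z^\epsilon)/\epsilon)]$ for bounded continuous $F$, showing that near-optimal controls there may be replaced by controls in $\tilde{\mathcal{U}}^N$ at small cost both in $\bar{L}_T$ and in the expectation of the bounded functional $F\circ\mathcal{G}^\epsilon$; the boundedness of $F$ is what substitutes for the total absence of continuity of $\mathcal{G}^\epsilon$ in the control.

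Two further steps would need repair even if you restricted attention to the concrete $\mathcal{G}^\epsilon$ of this paper. First, the assertion that $u_\epsilon^m\Rightarrow u^m$ ``by a continuous-mapping argument'' is unjustified: the topology on $S^N$ is that of weak convergence of the measures $\nu_T^g$, and the pointwise nonlinear truncation $g\mapsto\bigl((g\vee m^{-1})\wedge m\bigr)\mathbf{1}_{K_m}+\mathbf{1}_{K_m^c}$ is not continuous for this topology, since weak convergence of densities does not commute with pointwise truncation. Second, the bridge estimate is not a technical afterthought but the entire content of the theorem: carrying it out amounts to verifying Condition \ref{LDP}(b) for unbounded, non-compactly-supported controls, i.e.\ to doing precisely the work that Theorem \ref{LDP-main-01} is designed to let one avoid --- indeed, the authors invoke it exactly so that Theorem \ref{Prop-2} only has to treat $\phi_\epsilon\in\tilde{\mathcal{U}}^M$.
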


\subsection{SPDEs}
\label{section SPDEs}
In this section we introduce the stochastic partial differential equations (SPDEs in addreviation) that will be studied
in this paper. Let $H$, $V$ be two separable Hilbert spaces such that $V$ is continuously, densely imbedded in $H$. Identifying
$H$ with its dual we have
    $$
      V\subset H\cong H'\subset V',
    $$
where $V'$ stands for the topological dual of $V$. Let $\mathcal{A}$ be a bounded linear operator from $V$ to $V'$ satisfying
the following coercivity hypothesis: There exist constants $\alpha>0$ and $\lambda_0\geq0$ such that
\begin{eqnarray}
\label{condition-lower bounds of A}
   2\langle \mathcal{A}u,u\rangle+\lambda_0\|u\|^2_H\geq\alpha\|u\|^2_V,\ for\ all\ u\in V.
\end{eqnarray}

\begin{exmp}
Let $H=L^2(D)$, where $D\subset \mathbb{R}^d$ is a bounded domain,  and set
$$
V=H^{1,2}_{0}(D)=\overline{C^{\infty}_{0}(D)}^{\|\cdot \|},
$$
where $C^{\infty}_{0}(D)$ is the space of infinite differentiable functions with compact supports and the norm is defined as follows
$$
\|f\|^{2}:=\|f\|^{2}_{L^2}+\|\nabla f\|^{2}_{L^2}.
$$

Denote by $a(x)=(a_{ij}(x))$  a matrix-valued function on $D$
satisfying  the uniform ellipticity condition:
$$
\frac{1}{c} I_d \leq a(x)\leq c I_d\qquad \hbox{for some constant
$\;c\in(0,\infty)$}.
$$
Let $b(x)$ be a vector field on $D$ with $b\in L^p(D)$ for
some $p>d$. Define
$$
\mathcal{A}u=-div(a(x)\nabla u(x))+b(x)\cdot \nabla u(x).
$$
Then (\ref{condition-lower bounds of A}) is fulfilled for $(H,V,\mathcal{A})$.
\end{exmp}

\begin{exmp}
\rm \ Stochastic evolution equations associated with fractional
Laplacian:
\begin{eqnarray}
dY_t&=&\Delta_{\alpha}Y_t dt+dL_t, \\
Y_0&=&h\in H,
\end{eqnarray}
where  $\Delta_{\alpha}$ denotes  the generator of the symmetric
$\alpha$-stable process in $R^d$, $0<\alpha \leq 2$.
$\Delta_{\alpha}$ is called the fractional Laplace operator. $L_t$
stands for a L\'e{}vy process. It is well known that the Dirichlet
form associated with $\Delta_{\alpha}$ is given by
$${\cal E}(u,v)=K(d,\alpha )\int \int_{R^d\times R^d} \frac{(u(x)-u(y))(v(x)-v(y))}{|x-y|^{d+\alpha}}\,dxdy,$$
$$D({\cal E})=\{u\in L^2(R^d):\quad \int \int_{R^d\times R^d} \frac{|u(x)-u(y)|^2}{|x-y|^{d+\alpha}}\,dxdy<\infty \},$$
where $K(d,\alpha )=\alpha 2^{\alpha
-3}\pi^{-\frac{d+2}{2}}sin(\frac{\alpha\pi}{2})\Gamma
(\frac{d+\alpha}{2})\Gamma (\frac{\alpha}{2})$. We choose  $H=L^2(\mathbb{R}^d)$, and $V=D({\cal E})$ with the inner
product $<u,v>={\cal E}(u,v)+(u,v)_{L^2(R^d)}$.

\noindent Define
$$
\mathcal{A}u=-\Delta_{\alpha}.
$$
Then (\ref{condition-lower bounds of A}) is fulfilled for $(H,V,\mathcal{A})$. See \cite{FOT} for details about
the fractional Laplace operator.
\end{exmp}
\vspace{3mm}

Assume that $\mathcal{A}^{*}$ the adjoint operator of $\mathcal{A}$, admits a complete system of eigenvectors; that is, there exists a sequence $\{e_{k},k\geq 1\}\subset V$ that forms an orthonormal basis of $H$ such that
$$
\mathcal{A}^{*}e_{k}=\zeta_{k}e_{k}\quad \text{for} k\geq1.
$$

We assume
$
0\leq\zeta_1\leq\zeta_2\leq\cdots\rightarrow\infty
$
and denote by $H_{\mathcal{A}^{*}}=\{h\in H: \|\mathcal{A}^{*}h\|_{H}^{2}<\infty\}$ the domain of $\mathcal{A}^{*}$. Suppose that the $H$
cylindrical Brownian motion $\beta$
admits the following representation:
$$
\beta_t=\sum_{k=1}^\infty\beta_k(t)e_k
$$
where $\beta_k(t),k\geq1$ are independent standard Brownian motions.
%

Denote by $L_2(H)$ the space of all Hilbert-Schmidt operators from $H$ to $H$.
Let $\sigma:[0,T]\times H\rightarrow L_2(H)$, $G:[0,T]\times H\times\mathbb{X}\rightarrow H$ be maps satisfying the following conditions:
\begin{con}
\label{Condition G}
\mbox{}\par
There exists $K(\cdot)\in L^{1}([0,T],\mathbb{R}^+)$ such that

(1)(Growth) For all $t\in [0,T]$, and $u\in H$,
    $$
      \|\sigma(t,u)\|^2_{L_2(H)}+\int_{\mathbb{X}}\|G(t,u,v)\|^2_H\nu(dv)\leq K(t)(1+\|u\|^2_H);
    $$

$(2)$ (Lipschitz) For all $t\in[0,T]$, and $u_1,\ u_2\in H$,
    $$
         \|\sigma(t,u_1)-\sigma(t,u_2)\|^2_{L_2(H)}+\int_{\mathbb{X}}\|G(t,u_1,v)-G(t,u_2,v)\|^2_H\nu(dv)\leq K(t)\|u_1-u_2\|^2_H.
    $$

\end{con}

Consider the following stochastic evolution equation:
\begin{eqnarray}
\label{SPDE}
X^\epsilon_t
       =
        X^\epsilon_0-\int^t_0\mathcal{A}X^\epsilon_s ds+\sqrt{\epsilon}\int_{0}^{t}\sigma(s,X^{\epsilon}_{s})d\beta(s)
        +\epsilon\int^t_0\int_\mathbb{X}G(s,X^\epsilon_{s-},v)\widetilde{N}^{\epsilon^{-1}}(dsdv).
\end{eqnarray}

Here the precise definition of the solution to  (\ref{SPDE}) is as follows.

\begin{dfn}\label{Def-Solution-001}
Let $(\bar{\mathbb{V}},\mathcal{B}(\bar{\mathbb{V}}),\bar{\mathbb{P}},\{\bar{\mathcal{F}_t}\})$ be the filtered probability
space
described in Section \ref{Section Representation}. Suppose that $X_0$ is a $\bar{\mathcal{F}_0}$-measurable $H$-valued
random variable
such that $\bar{\mathbb{E}}\|X_0\|^2_H<\infty$. A stochastic process $\{X^\epsilon_t\}_{t\in[0,T]}$ defined on $\bar{\mathbb{V}}$
is said
to be a $H$-valued solution to (\ref{SPDE}) with initial value $X_0$, if

$a)$ $X^\epsilon_t$ is a $H$-valued $\bar{\mathcal{F}}_t$-measurable random variable, for all $t\in[0,T]$;

$b)$ $X^\epsilon\in D([0,T],H)\cap L^2([0,T],V)$ a.s.;

$c)$ For all $t\in[0,T]$, every $\phi\in V$,
     \begin{eqnarray}\label{Def-Solution-001-weak}
       \langle X^\epsilon_t,\phi\rangle
           &=&
             \langle X_0,\phi\rangle-\int_0^t\langle \mathcal{A}X^\epsilon_s,\phi\rangle ds
             +\sqrt{\epsilon}\langle\int_{0}^{t}\sigma(s,X^{\epsilon}_{s})d\beta(s),\phi\rangle\nonumber\\
            && +\epsilon \int_0^t\int_{\mathbb{X}}\langle
            G(s,X_{s-}^\epsilon,v),\phi\rangle\widetilde{N}^{\epsilon^{-1}}(ds,dv), \text{a.s.}.
     \end{eqnarray}
    \end{dfn}

    \begin{dfn}\label{Def-Solution-001-Unique}$(\bf{Pathwise\ uniqueness})$
       We say that the $H$-valued solution for the stochastic evolution equation (\ref{SPDE}) has the pathwise
       uniqueness if any two
       $H$-valued solutions $X$ and $X'$ defined on the same filtered probability space with
       respect to the same Poisson random measure and Brownian motion starting from the same initial condition $X_0$ coincide almost
       surely.
    \end{dfn}

 \section{Large Deviation Principle}
 Assume $X_{0}$ is deterministic. Let $X^\epsilon$ be the $H$-valued solution to  (\ref{SPDE}) with initial value $X_0$. In this
 section, we establish an LDP for $\{X^\epsilon\}$ under suitable assumptions.

 We begin by introducing the map $\mathcal{G}_0$ that will be used to define the rate function and also used for
 verification of
 Condition \ref{LDP}. Recall that $\mathbb{S}=\bigcup_{N\geq1} \bar{S}^N$, where $\bar{S}^N$ is defined in last
 section. As a first step we show
 that under the conditions below, for every $q=(f,g)\in \mathbb{S}$, the deterministic integral equation
     \begin{eqnarray}\label{SPDE-determined equation}
        \widetilde{X}^q_t
               =
                 X_0
               -
                 \int_0^t\mathcal{A}\widetilde{X}^q_sds
               +
               \int_{0}^{t}\sigma(s,  \widetilde{X}^q_s)f(s)ds
               +
                 \int_0^t\int_{\mathbb{X}}G(s,\widetilde{X}^q_s,v)(g(s,v)-1)\nu(dv)ds
     \end{eqnarray}
has a unique continuous solution. Here $q=(f,g)$ plays the role of a control.

Let
     $$
       \|G(t,v)\|_{0,H}=\sup_{u\in H}\frac{\|G(t,u,v)\|_H}{1+\|u\|_H},\ \ (t,v)\in[0,T]\times \mathbb{X}.
     $$
     $$
       \|G(t,v)\|_{1,H}=\sup_{u_1,u_2\in H,u_1\neq u_2}\frac{\|G(t,u_1,v)-G(t,u_2,v)\|_H}{\|u_1-u_2\|_H},\ \
       (t,v)\in[0,T]\times \mathbb{X}.
     $$

\begin{con}$(\bf{Exponential\ Integrability})$\label{Condition Exponential Integrability}
For $i=0,\ 1$, there exists $\delta^i_1>0$ such that for all $E\in\mathcal{B}([0,T]\times \mathbb{X})$ satisfying
$\nu_T(E)<\infty$, the following holds
         $$
           \int_Ee^{\delta^i_1 \|G(s,v)\|^2_{i,H}}\nu(dv)ds<\infty.
         $$
\end{con}

\begin{remark}\label{Remark-01}
Condition \ref{Condition Exponential Integrability} implies that, for every $\delta_2>0$ and for all
$E\in\mathcal{B}([0,T]\times \mathbb{X})$
satisfying $\nu_T(E)<\infty$,
   $$
     \int_Ee^{\delta_2\|G(s,v)\|_{0,H}}\nu(dv)ds<\infty.
   $$
\end{remark}

\vspace{4mm}
  Now recall the following inequalities from \cite{Budhiraja-Chen-Dupuis}, which will be used later.

  $a)$ For $a,b,\sigma\in(0,\infty)$, there exists $C(\sigma)$ only depending on $\sigma$, such that
     \begin{eqnarray}\label{Inequality-001}
       ab\leq C(\sigma)e^{\sigma a}+\frac{1}{\sigma}(b\log b-b+1)=C(\sigma)e^{\sigma a}+\frac{1}{\sigma}l(b);
     \end{eqnarray}

  $b)$ For each $\beta>0$ there exists $c_1(\beta)>0$, such that $c_1(\beta)\rightarrow 0$ as $\beta\rightarrow\infty$
  and
      $$
          |x-1|\leq c_1(\beta)l(x)\ whenever\ |x-1|\geq\beta;
      $$

  $c)$ For each $\beta>0$ there exists $c_2(\beta)<\infty$, such that
        $$
          |x-1|^2\leq c_2(\beta)l(x)\ whenever\ |x-1|\leq\beta.
        $$

The following lemma was proved in  \cite{Budhiraja-Chen-Dupuis}.
\begin{lem}\label{Lemma-Condition-0,H-1,H}
Under Condition \ref{Condition G} and Condition \ref{Condition Exponential Integrability}, for $i=0,\ 1$ and every
$N\in\mathbb{N}$,
     \begin{eqnarray}\label{Inq-G-0-1}
        C^N_{i,2}:=\sup_{g\in S^N}\int_{\mathbb{X}_T}\|G(s,v)\|^2_{i,H}(g(s,v)+1)\nu(dv)ds<\infty,
     \end{eqnarray}

     \begin{eqnarray}\label{Inq-G-0-2}
        C^N_{i,1}:=\sup_{g\in S^N}\int_{\mathbb{X}_T}\|G(s,v)\|_{i,H}|g(s,v)-1|\nu(dv)ds<\infty.
     \end{eqnarray}

\end{lem}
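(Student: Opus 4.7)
The plan is to establish both bounds via a decomposition of $\mathbb{X}_T$ into two regions according to the deviation of $g$ from $1$, combined with inequalities (a)--(c) and Condition \ref{Condition Exponential Integrability}.

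First I fix $\beta \in (0,1)$ and set $A_\beta = \{(s,v) \in \mathbb{X}_T : |g(s,v) - 1| > \beta\}$. Inequality (b) and $L_T(g) \leq N$ give
$$
\beta\, \nu_T(A_\beta) \leq \int_{A_\beta} |g-1|\, d\nu_T \leq c_1(\beta) \int_{\mathbb{X}_T} l(g)\, d\nu_T \leq c_1(\beta) N,
$$
so $\nu_T(A_\beta) \leq c_1(\beta) N/\beta < \infty$ uniformly over $g \in S^N$. Consequently Condition \ref{Condition Exponential Integrability} and Remark \ref{Remark-01} make $\int_{A_\beta} e^{\sigma\|G(s,v)\|^2_{i,H}}\, d\nu_T$ and $\int_{A_\beta} e^{\sigma\|G(s,v)\|_{i,H}}\, d\nu_T$ finite uniformly in $g\in S^N$, for any $\sigma \leq \delta^i_1$.

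For $C^N_{i,2}$ I would split the integral as
$$
\int_{\mathbb{X}_T} \|G\|^2_{i,H}(g+1)\, d\nu_T = \int_{A_\beta} \|G\|^2_{i,H}(g+1)\, d\nu_T + \int_{A_\beta^c} \|G\|^2_{i,H}(g+1)\, d\nu_T.
$$
On $A_\beta$, write $\|G\|^2_{i,H}(g+1) = \|G\|^2_{i,H} g + \|G\|^2_{i,H}$ and apply inequality (a) with $a=\|G\|^2_{i,H}$, $b=g$, and $\sigma = \delta^i_1$ to the first term, while using the elementary bound $\|G\|^2_{i,H} \leq \sigma^{-1} e^{\sigma\|G\|^2_{i,H}}$ on the second; integrating, the exponential parts are controlled by the uniform estimates above and the $l(g)$ part is controlled by $L_T(g)\leq N$. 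On $A_\beta^c$, $g+1$ is pointwise bounded by $2+\beta$; here I exploit the smallness of $|g-1|\leq\beta$ via inequality (c), writing $(g+1) \leq 2 + (g-1)$ and applying Cauchy--Schwarz together with $(g-1)^2 \leq c_2(\beta) l(g)$ to reduce matters to $\int l(g)\, d\nu_T \leq N$ and quadratic moments that themselves reduce back to the $A_\beta$ part via the same splitting.

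Finally, $C^N_{i,1}$ is deduced from $C^N_{i,2}$ by Cauchy--Schwarz:
$$
\int_{\mathbb{X}_T} \|G\|_{i,H}|g-1|\, d\nu_T \leq \Bigl(\int_{\mathbb{X}_T}\|G\|^2_{i,H}(g+1)\, d\nu_T\Bigr)^{1/2} \Bigl(\int_{\mathbb{X}_T}\frac{(g-1)^2}{g+1}\, d\nu_T\Bigr)^{1/2}.
$$
The first factor is $\sqrt{C^N_{i,2}}$. For the second, an elementary convexity argument shows that there is a universal constant $c_0$ with $(g-1)^2 \leq c_0 (g+1)\, l(g)$ for all $g\geq 0$ (checking the asymptotics $g\to 0,1,\infty$), so the second factor is bounded by $\sqrt{c_0 N}$.

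The principal obstacle is that Condition \ref{Condition Exponential Integrability} provides exponential integrability of $\|G\|_{i,H}$ only on sets of finite $\nu_T$-measure, whereas $\nu_T(\mathbb{X}_T)$ is generally infinite. The $A_\beta$/$A_\beta^c$ decomposition is engineered precisely to confine every exponential-type estimate (the applications of (a)) to the finite-measure set $A_\beta$, while on $A_\beta^c$ the two-sided boundedness $|g-1|\leq\beta$ combined with the quadratic control $(g-1)^2 \leq c_2(\beta)l(g)$ and $L_T(g)\leq N$ serves as a substitute for exponential integrability in the $\sigma$-finite regime.
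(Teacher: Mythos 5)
The paper does not actually prove this lemma --- it is quoted verbatim from \cite{Budhiraja-Chen-Dupuis} --- so your proposal can only be measured against the standard argument given there. Your skeleton is that argument: bound $\nu_T(A_\beta)$ uniformly over $S^N$ via inequality (b), apply inequality (a) on the finite-measure piece $A_\beta$ and inequality (c) on $A_\beta^c$. Your derivation of (\ref{Inq-G-0-2}) from (\ref{Inq-G-0-1}) through Cauchy--Schwarz and the pointwise bound $(g-1)^2\leq c_0(g+1)l(g)$ is correct and rather cleaner than treating the two estimates separately.

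There is, however, a genuine gap in the proof of (\ref{Inq-G-0-1}), and it sits at the crux of the lemma: you never establish $\int_{\mathbb{X}_T}\|G(s,v)\|^2_{i,H}\,\nu(dv)ds<\infty$. This is needed in two places. First, on $A_\beta^c$ you have $1\leq g+1\leq 2+\beta$, and $A_\beta^c$ has infinite $\nu_T$-measure in general, so $\int_{A_\beta^c}\|G\|^2_{i,H}(g+1)\,d\nu_T$ is comparable to the full integral $\int_{\mathbb{X}_T}\|G\|^2_{i,H}\,d\nu_T$; your phrase ``quadratic moments that themselves reduce back to the $A_\beta$ part'' is circular, because nothing confines these moments to a finite-measure set. (Taking $g\equiv 1\in S^N$ shows that the assertion $C^N_{i,2}<\infty$ literally contains this square-integrability, so no manipulation of $l(g)$ can avoid it.) Second, your claim that $\int_{A_\beta}e^{\sigma\|G\|^2_{i,H}}\,d\nu_T$ is bounded \emph{uniformly} in $g\in S^N$ does not follow formally from Condition \ref{Condition Exponential Integrability}, which asserts finiteness only for each \emph{fixed} finite-measure set, whereas $A_\beta$ varies with $g$; the standard repair is to split at a level $L$, writing $\int_{A_\beta}e^{\sigma\|G\|^2_{i,H}}\,d\nu_T\leq e^{\sigma L^2}\nu_T(A_\beta)+\int_{\{\|G\|_{i,H}>L\}}e^{\sigma\|G\|^2_{i,H}}\,d\nu_T$, which requires $\nu_T(\{\|G\|_{i,H}>L\})<\infty$ --- again the missing square-integrability. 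Note that Condition \ref{Condition G} controls $\int_{\mathbb{X}}\|G(t,u,v)\|^2_H\nu(dv)$ for each \emph{fixed} $u$, and the supremum over $u$ (or over pairs $u_1,u_2$) in the definitions of $\|\cdot\|_{0,H}$ and $\|\cdot\|_{1,H}$ does not commute with the integral, so this integrability is not free; it is the a priori input that the cited source supplies and that your argument must either import explicitly or prove before the $A_\beta$/$A_\beta^c$ decomposition can close.
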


We also need the following lemma whose proof can be found in  Chapter III of \cite{Temam}.

\begin{lem}\label{Lemma-thm-01}
Assume that         $$
                            \left\{
                                  \begin{array}{ll}
                                      \!\!\! f=f_1+f_2,\ f_1\in L^2([0,T],V'),\ f_2\in L^1([0,T],H),\\
                                      \!\!\! u_0\in H,
                                  \end{array}
                            \right.
                           $$
then there exists a unique function $u$ (denote by $u'$ its derivative) which satisfies
                          $$
                            \left\{
                                  \begin{array}{lll}
                                      \!\!\! u\in L^2([0,T],V)\cap C([0,T],H),u'\in L^2([0,T],V')+L^1([0,T],H),\\
                                      \!\!\! \langle u_t,\phi\rangle
                                             =
                                               \langle u_0,\phi\rangle
                                              -
                                               \int_0^t\langle \mathcal{A}u_s,\phi\rangle ds
                                              +
                                               \int_0^t\langle f_s,\phi\rangle ds,\ \ \forall\phi\in V,\\
                                      \!\!\! \frac{d}{dt}\langle u,u\rangle=2\langle u',u\rangle.
                                  \end{array}
                            \right.
                           $$
\end{lem}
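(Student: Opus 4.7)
My plan is to follow the classical Galerkin approximation scheme of Lions--Temam, using the coercivity hypothesis (\ref{condition-lower bounds of A}) as the main analytic input. Uniqueness I would dispose of first: if $u,\tilde u$ are two solutions with the same data, then $w=u-\tilde u$ satisfies $w(0)=0$ and $w'+\mathcal{A}w=0$ in $V'$, so assuming the chain rule $\frac{d}{dt}\|w\|_H^2=2\langle w',w\rangle$ together with (\ref{condition-lower bounds of A}) gives $\frac{d}{dt}\|w\|_H^2\le\lambda_0\|w\|_H^2-\alpha\|w\|_V^2$, and Gronwall forces $w\equiv 0$. Thus once the chain-rule conclusion is established, uniqueness is immediate.

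For existence, I would work in the finite-dimensional subspaces $V_n:=\mathrm{span}\{e_1,\dots,e_n\}$, where $\{e_k\}$ is the orthonormal basis of eigenvectors of $\mathcal{A}^*$ fixed in Section \ref{section SPDEs}, and seek $u_n(t)=\sum_{k=1}^{n}c_k^n(t)e_k$ solving the projected equation
\begin{equation*}
\langle u_n'(t),e_j\rangle+\langle\mathcal{A}u_n(t),e_j\rangle=\langle f(t),e_j\rangle,\qquad j\le n,\quad u_n(0)=P_nu_0.
\end{equation*}
This is a linear system of ODEs with $L^1$ coefficients/forcing in time, hence uniquely solvable on $[0,T]$ by Carath\'eodory theory. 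Testing against $u_n$ itself and using (\ref{condition-lower bounds of A}) gives
\begin{equation*}
\frac{d}{dt}\|u_n\|_H^2+\alpha\|u_n\|_V^2\le\lambda_0\|u_n\|_H^2+2\|f_1\|_{V'}\|u_n\|_V+2\|f_2\|_H\|u_n\|_H,
\end{equation*}
and after absorbing $\|u_n\|_V$ via Young and applying Gronwall (with the integrable weight $\lambda_0+2\|f_2\|_H$) one obtains uniform bounds for $u_n$ in $L^\infty([0,T],H)\cap L^2([0,T],V)$. Rearranging the Galerkin equation, $u_n'=-\mathcal{A}u_n+f_1+f_2$, which is bounded in $L^2([0,T],V')+L^1([0,T],H)$.

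Next I would pass to the limit. Extract a subsequence with $u_n\rightharpoonup u$ in $L^2([0,T],V)$, weakly-$*$ in $L^\infty([0,T],H)$, and with $\mathcal{A}u_n\rightharpoonup\mathcal{A}u$ in $L^2([0,T],V')$. Taking $\phi=e_j$ in the projected equation, integrating against an arbitrary smooth scalar test function in time, and passing to the limit produces the weak formulation in the second line of the conclusion first for $\phi\in\bigcup_nV_n$ and then for all $\phi\in V$ by density. The identity itself then shows $u'=(f_1-\mathcal{A}u)+f_2\in L^2([0,T],V')+L^1([0,T],H)$.

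The delicate step --- and the only real obstacle --- is to upgrade $u$ to $C([0,T],H)$ and to establish the chain rule $\frac{d}{dt}\langle u,u\rangle=2\langle u',u\rangle$, which both the uniqueness argument above and the statement require. The textbook Lions--Magenes lemma is usually stated for $u\in L^2([0,T],V)$ with $u'\in L^2([0,T],V')$, whereas here $u'$ only lies in the sum space $L^2([0,T],V')+L^1([0,T],H)$. To handle this I would split $u'=v_1+v_2$ with $v_1\in L^2([0,T],V')$ and $v_2\in L^1([0,T],H)$, and regularize in time by extending $u$ to $\mathbb{R}$ and convolving with a mollifier $\rho_\varepsilon$; on the smooth level both terms $\langle u_\varepsilon',u_\varepsilon\rangle$ and $\frac{d}{dt}\|u_\varepsilon\|_H^2$ make classical sense. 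Letting $\varepsilon\downarrow 0$, the contribution from $v_1$ passes via the standard $L^2$--$L^2$ duality argument, while the $v_2$ part converges because $\langle v_2,u\rangle$ is in $L^1([0,T])$ (using $u\in L^\infty([0,T],H)$ up to a null set). This yields the chain rule and, via Cauchy sequences in $H$, strong continuity $u\in C([0,T],H)$, completing the proof.
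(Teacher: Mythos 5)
The paper does not prove this lemma at all --- it simply cites Chapter III of Temam --- and your Galerkin/energy/mollification argument is precisely the classical proof found there (finite-dimensional projection, coercivity-based a priori bounds, weak passage to the limit, and a regularization argument to get the chain rule $\frac{d}{dt}\|u\|_H^2=2\langle u',u\rangle$ when $u'$ only lies in the sum space $L^2([0,T],V')+L^1([0,T],H)$), so you are following the intended route and the proof is correct. The only point worth making explicit is that $\bigcup_n \mathrm{span}\{e_1,\dots,e_n\}$ must be dense in $V$ (not merely in $H$) for the final density step extending the weak formulation to all $\phi\in V$; this is implicit in the paper's standing assumptions on the eigenbasis of $\mathcal{A}^*$.
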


\begin{lem}\label{Lemma-thm-02}
 a). If $Y\in C([0,T],H)$, for any $q=(f,g)\in\mathbb{S}$,  then
       $$
         \sigma(\cdot,Y(\cdot))f(\cdot)\in L^1([0,T],H),\ \int_{\mathbb{X}}G(\cdot,Y(\cdot),v)(g(\cdot,v)-1)\nu(dv)\in L^1([0,T],H);
       $$
 b). If $Y_n\in C([0,T],H)$, $n\geq1$ with $C=\sup_{n}\sup_{s\in[0,T]}\|Y_n(s)\|_H<\infty$, then
       \begin{eqnarray*}
          \widetilde{C}_N&:=&\sup_{q=(f,q)\in
          \bar{S}^N}\sup_n\Big[\int_0^T\|\int_{\mathbb{X}}G(s,Y_n(s),v)(g(s,v)-1)\nu(dv)\|_Hds+\int_0^T\|\sigma(s,Y_n(s))f(s)\|_Hds\Big]\\
          &<&\infty.
       \end{eqnarray*}
\end{lem}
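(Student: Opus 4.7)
The plan is to reduce both statements to two elementary pointwise bounds and then invoke Lemma~\ref{Lemma-Condition-0,H-1,H} together with the fact that $f\in L^2([0,T],H)$ with a controlled norm. Each of the two integrands splits naturally; the $\sigma$-part is handled by the Hilbert--Schmidt growth bound and Cauchy--Schwarz, while the $G$-part is handled by the definition of $\|G(s,v)\|_{0,H}$.

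For the first term in (a), write $q=(f,g)\in\bar S^N$ for some $N$ (since $\mathbb{S}=\bigcup_N\bar S^N$). I would use the standard inequality $\|\sigma(s,Y(s))f(s)\|_H\le \|\sigma(s,Y(s))\|_{L_2(H)}\|f(s)\|_H$, combined with the growth hypothesis from Condition~\ref{Condition G}(1): $\|\sigma(s,u)\|_{L_2(H)}^2\le K(s)(1+\|u\|_H^2)$. Then a Cauchy--Schwarz step gives
\begin{equation*}
\int_0^T\|\sigma(s,Y(s))f(s)\|_H\,ds\le \Bigl(\int_0^T K(s)(1+\|Y(s)\|_H^2)\,ds\Bigr)^{1/2}\Bigl(\int_0^T\|f(s)\|_H^2\,ds\Bigr)^{1/2}.
\end{equation*}
The first factor is finite because $K\in L^1$ and $Y\in C([0,T],H)$ is bounded; the second factor equals $\sqrt{2\tilde L_T(f)}\le\sqrt{2N}<\infty$. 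This proves the $\sigma$-membership in $L^1([0,T],H)$.

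For the second term in (a), I would use the definition $\|G(s,Y(s),v)\|_H\le \|G(s,v)\|_{0,H}(1+\|Y(s)\|_H)$ pointwise in $(s,v)$, then estimate
\begin{equation*}
\int_0^T\Bigl\|\int_\mathbb{X}G(s,Y(s),v)(g(s,v)-1)\,\nu(dv)\Bigr\|_H ds\le \bigl(1+\sup_{s\le T}\|Y(s)\|_H\bigr)\int_0^T\!\!\int_\mathbb{X}\|G(s,v)\|_{0,H}|g(s,v)-1|\,\nu(dv)\,ds.
\end{equation*}
By Lemma~\ref{Lemma-Condition-0,H-1,H}, the double integral is bounded by $C^N_{0,1}<\infty$, and $Y$ is bounded on $[0,T]$, so finiteness follows.

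Part (b) is proved by exactly the same two estimates, with the uniform bound $C=\sup_n\sup_{s\in[0,T]}\|Y_n(s)\|_H$ replacing $\sup_s\|Y(s)\|_H$, and with the supremum over $q\in\bar S^N$ absorbed into Lemma~\ref{Lemma-Condition-0,H-1,H} for the $G$-term and into the uniform bound $\tilde L_T(f)\le N$ for the $\sigma$-term. Explicitly, one gets $\widetilde C_N\le(1+C)C^N_{0,1}+\|K\|_{L^1}^{1/2}\sqrt{1+C^2}\sqrt{2N}$, which is finite and independent of $n$ and $q$. There is no real obstacle here; the only mild bookkeeping issue is to remember that the Hilbert--Schmidt norm dominates the operator norm so that $\sigma(s,\cdot)f(s)$ is actually controlled pointwise in $H$, and that $q\in\mathbb{S}$ means $q$ lies in some $\bar S^N$ so that Lemma~\ref{Lemma-Condition-0,H-1,H} applies.
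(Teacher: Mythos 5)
Your proposal is correct and follows essentially the same route as the paper: the $G$-term is handled identically via $\|G(s,Y(s),v)\|_H\le\|G(s,v)\|_{0,H}(1+\|Y(s)\|_H)$ and Lemma \ref{Lemma-Condition-0,H-1,H}, and the $\sigma$-term via the Hilbert--Schmidt growth bound, the paper merely using $ab\le a^2+b^2$ where you use Cauchy--Schwarz (an immaterial difference). Your version even gives a slightly cleaner explicit constant and avoids a small slip in the paper, which drops the ``$+1$'' from $K(s)(1+\|Y(s)\|_H^2)$ in its $\sigma$-estimate.
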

\begin{proof}
Since
   \begin{eqnarray*}
          &  & \int_0^t\|\int_{\mathbb{X}}G(s,Y(s),v)(g(s,v)-1)\nu(dv)\|_Hds\\
          &\leq&
          \int_0^t\int_\mathbb{X}\|G(s,Y(s),v)(g(s,v)-1)\|_H\nu(dv)ds\nonumber\\
      &=&
          \int_0^t\int_\mathbb{X}\frac{\|G(s,Y(s),v)\|_H}{1+\|Y(s)\|_H}|g(s,v)-1|(1+\|Y(s)\|_H)\nu(dv)ds\nonumber\\
      &\leq&
          (1+\sup_{s\in[0,T]}\|Y(s)\|_H)\int_0^t\int_{\mathbb{X}}\|G(s,v)\|_{0,H}|g(s,v)-1|\nu(dv)ds,\nonumber
   \end{eqnarray*}
and by Condition \ref{Condition G},
   \begin{eqnarray*}
           \int_0^t\|\sigma(s,Y(s))f(s)\|_Hds
       &\leq&
           \int_0^t\|\sigma(s,Y(s))\|_{L_2(H)}\|f(s)\|_Hds\\
       &\leq&
           \int_0^t\|\sigma(s,Y(s))\|^2_{L_2(H)}ds+\int_0^t\|f(s)\|^2_Hds\\
       &\leq&
           \int_0^tK(s)\|Y(s)\|^2_{H}ds+\int_0^t\|f(s)\|^2_Hds\\
       &\leq&
           \Big(\sup_{s\in[0,T]}\|Y(s)\|^2_{H}\Big)\int_0^TK(s)ds+\int_0^T\|f(s)\|^2_Hds,
   \end{eqnarray*}
the lemma follows from Lemma \ref{Lemma-Condition-0,H-1,H}.
\end{proof}
\vspace{3mm}

\begin{thm}\label{Main-thm-01}
  Fix $q=(f,g)\in\mathbb{S}$. Suppose Condition \ref{Condition G} and Condition \ref{Condition Exponential Integrability}
  hold. Then there exists a unique $\widetilde{X}^q\in C([0,T],H)$ such that for every $\phi\in V$,
     \begin{eqnarray}\label{main-thm-01-eq}
       \langle \widetilde{X}^q_t,\phi\rangle
          =
            \langle X_0,\phi\rangle
          &  -&
            \int_0^t\langle \widetilde{X}^q_s,\mathcal{A}^{*}\phi\rangle ds
          +
          \int_0^t\langle\sigma(s,\widetilde{X}^q_s)f(s),\phi\rangle ds\nonumber\\
          &  +&
            \int_0^t\int_{\mathbb{X}}\langle G(s,\widetilde{X}^q_s,v),\phi\rangle(g(s,v)-1)\nu(dv)ds.
     \end{eqnarray}
Moreover, for fixed $N\in\mathbb{N}$, there exists $C_N>0$ such that
     \begin{eqnarray}\label{main-thm-01-ineq}
        \sup_{q\in S^N}\Big(\sup_{s\in[0,T]}\|\widetilde{X}^q_s\|^2_H+\int_0^T\|\widetilde{X}^q_s\|^2_Vds\Big)\leq
        C_N.
     \end{eqnarray}
\end{thm}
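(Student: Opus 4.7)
The plan is to construct $\widetilde{X}^q$ by Picard iteration built on Lemma \ref{Lemma-thm-01} (Temam), and then derive both the uniform bound (\ref{main-thm-01-ineq}) and the uniqueness from a single energy estimate combined with a Gronwall inequality. Define iteratively $\widetilde{X}^{(0)} \equiv X_0$ and, given $\widetilde{X}^{(n)} \in C([0,T],H) \cap L^2([0,T],V)$, set
$$
F_n(s) := \sigma(s,\widetilde{X}^{(n)}_s)f(s) + \int_{\mathbb{X}} G(s,\widetilde{X}^{(n)}_s,v)(g(s,v)-1)\,\nu(dv).
$$
By Lemma \ref{Lemma-thm-02}, $F_n \in L^1([0,T],H)$. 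Applying Lemma \ref{Lemma-thm-01} with $f_1 \equiv 0$, $f_2 = F_n$ produces a unique $\widetilde{X}^{(n+1)} \in L^2([0,T],V) \cap C([0,T],H)$ solving the linear weak equation with forcing $F_n$, and entitles us to the energy identity $\tfrac{d}{dt}\|\widetilde{X}^{(n+1)}_t\|_H^2 = 2\langle(\widetilde{X}^{(n+1)})',\widetilde{X}^{(n+1)}\rangle$.

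\textbf{Key estimates.} Writing $\Delta_n := \widetilde{X}^{(n+1)} - \widetilde{X}^{(n)}$ and applying the energy identity together with the coercivity condition (\ref{condition-lower bounds of A}), I get
$$
\|\Delta_n(t)\|_H^2 + \alpha\int_0^t \|\Delta_n(s)\|_V^2\,ds \le \lambda_0 \int_0^t \|\Delta_n(s)\|_H^2\,ds + 2\int_0^t \langle F_n(s)-F_{n-1}(s),\Delta_n(s)\rangle\,ds.
$$
Using the Lipschitz parts of Condition \ref{Condition G},
$\|F_n(s) - F_{n-1}(s)\|_H \le \sqrt{K(s)}\,\|f(s)\|_H\,\|\Delta_{n-1}(s)\|_H + \Psi_g(s)\,\|\Delta_{n-1}(s)\|_H$,
where $\Psi_g(s) := \int_{\mathbb{X}} \|G(s,v)\|_{1,H}|g(s,v)-1|\nu(dv)$ is $L^1([0,T])$ with norm bounded uniformly for $g\in S^N$ thanks to Lemma \ref{Lemma-Condition-0,H-1,H}. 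A Young splitting converts the right-hand side into $\int_0^t a(s)\|\Delta_{n-1}(s)\|_H^2\,ds + \int_0^t b(s)\|\Delta_n(s)\|_H^2\,ds$ with $a,b\in L^1([0,T])$; Gronwall then yields $\sup_{t\le T}\|\Delta_n(t)\|_H^2 \le C\, \|a\|_{L^1}^n/n!$, so $\{\widetilde{X}^{(n)}\}$ is Cauchy in $C([0,T],H)$. The identical scheme applied to $\widetilde{X}^{(n+1)}$ itself (using the growth form $\|G(s,u,v)\|_H \le \|G(s,v)\|_{0,H}(1+\|u\|_H)$ and the $L^1$ bound on $\Phi_g(s):=\int_{\mathbb{X}}\|G(s,v)\|_{0,H}|g(s,v)-1|\nu(dv)$) yields, after absorbing the $\alpha\int \|\widetilde{X}^{(n+1)}\|_V^2\,ds$ term on the left, a Gronwall bound uniform in $n$ and in $q\in\bar{S}^N$, establishing (\ref{main-thm-01-ineq}).

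\textbf{Passage to the limit, uniqueness, main obstacle.} Let $\widetilde{X}^q \in C([0,T],H)$ be the uniform limit of $\widetilde{X}^{(n)}$; the $L^2([0,T],V)$ bound lets me extract a weakly convergent subsequence with limit $\widetilde{X}^q \in L^2([0,T],V)$. Linearity of $\mathcal{A}$ and the Lipschitz bounds on $\sigma, G$ (together with $\widetilde{X}^{(n)} \to \widetilde{X}^q$ uniformly in $H$) permit passage to the limit in the weak formulation, giving (\ref{main-thm-01-eq}). Uniqueness follows by applying the same energy-plus-Gronwall argument to the difference of two solutions. The main obstacle is not the abstract variational framework but rather ensuring that the $g$-dependent coefficients $\Phi_g$ and $\Psi_g$ are $L^1$ in $s$ uniformly over $g\in S^N$; this is exactly what Lemma \ref{Lemma-Condition-0,H-1,H} supplies under Condition \ref{Condition Exponential Integrability}, and without the exponential integrability assumption the Gronwall closure (and hence the uniform bound, essential later for the tightness argument in the LDP) would fail.
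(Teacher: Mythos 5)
Your proposal is correct and follows essentially the same route as the paper: Picard iteration built on the Temam lemma, energy estimates combining coercivity, the Lipschitz/growth parts of Condition \ref{Condition G} and the uniform-in-$g$ integrability from Lemma \ref{Lemma-Condition-0,H-1,H}, a Gronwall closure with factorial decay for convergence, and the same energy-plus-Gronwall machinery for uniqueness and the uniform bound (\ref{main-thm-01-ineq}). The only cosmetic difference is that you pass to the limit in the weak formulation via weak $L^2([0,T],V)$ compactness, whereas the paper instead solves the linear equation with the limiting forcing and identifies that solution with the uniform limit by repeating the contraction estimate.
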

\begin{proof}
{\bf Existence\ of\ solution:}
Let $Y_0(t):=X_0,t\geq0$. Suppose $Y_{n-1}$ has been defined,
by Lemma \ref{Lemma-thm-01} and Lemma \ref{Lemma-thm-02}, there exists a unique function $Y_n\in L^2([0,T],V)\cap
C([0,T],H)$
such that
    \begin{eqnarray}\label{Solution-Y_n}
        \langle Y_{n}(t),\phi\rangle
     &=&
        \langle X_0,\phi\rangle
        -
          \int_0^t\langle \mathcal{A}Y_{n}(s),\phi\rangle ds
        +
          \int_0^t\langle\sigma(s,Y_{n-1}(s))f(s),\phi\rangle ds\nonumber\\
       & &+
        \int_0^t\int_{\mathbb{X}}\langle G(s,Y_{n-1}(s),v)(g(s,v)-1),\phi\rangle\nu(dv)ds,\ \phi\in V;
    \end{eqnarray}
and
\begin{eqnarray*}
   &  & \|Y_{n+1}(t)-Y_n(t)\|_H^2\nonumber\\
   & = &
            -2\int_0^t \langle \mathcal{A}(Y_{n+1}(s)-Y_n(s)), Y_{n+1}(s)-Y_n(s)\rangle ds\\
         &&   +2 \int_0^t\langle(\sigma(s,Y_{n}(s))-\sigma(s,Y_{n-1}(s)))f(s),Y_{n+1}(s)-Y_n(s)\rangle ds\\
       &   &+
            2\int_0^t\int_{\mathbb{X}}\langle
            G(s,Y_n(s),v)-G(s,Y_{n-1}(s),v),Y_{n+1}(s)-Y_n(s)\rangle(g(s,v)-1)\nu(dv)ds.\nonumber
\end{eqnarray*}
In view of $(\ref{condition-lower bounds of A})$,
\begin{eqnarray*}
&  & \|Y_{n+1}(t)-Y_n(t)\|_H^2+\alpha\int_0^t\|Y_{n+1}(s)-Y_n(s)\|^2_Vds\\
   &\leq&
         2 \int_0^t\|(\sigma(s,Y_{n}(s))-\sigma(s,Y_{n-1}(s)))f(s)\|_H\|Y_{n+1}(s)-Y_n(s)\|_H ds\\
       &  &+
         2\int_0^t\int_{\mathbb{X}}\|G(s,Y_n(s),v)-G(s,Y_{n-1}(s),v)\|_H\|Y_{n+1}(s)-Y_n(s)\|_H|g(s,v)-1|\nu(dv)ds \\
         &&+\lambda_{0}\int_{0}^{t}\|Y_{n+1}(s)-Y_n(s)\|^2_{H}ds\\
   &\leq&
         2\int_{0}^{t}\sqrt{K(s)}\|Y_{n}(s)-Y_{n-1}(s)\|_H\|Y_{n+1}(s)-Y_n(s)\|_H   \|f(s)\|_{H}\;ds\\
         &&+   \lambda_{0}\int_{0}^{t}\|Y_{n+1}(s)-Y_n(s)\|^2_{H}ds\\
         &&+
         \int_0^t\int_{\mathbb{X}}\frac{\|G(s,Y_n(s),v)-G(s,Y_{n-1}(s),v)\|_H}{\|Y_n(s)-Y_{n-1}(s)\|_H}\|Y_n(s)-Y_{n-1}(s)\|_H\\
         &&\quad\quad \cdot \|Y_{n+1}(s)-Y_n(s)\|_H|g(s,v)-1|\nu(dv)ds\\
   &\leq&
         C\int_0^t{K(s)}\|Y_{n+1}(s)-Y_n(s)\|^{2}_H\;ds\\
         &&+C\int_0^t\|Y_{n}(s)-Y_{n-1}(s)\|^{2}_H\|f(s)\|^{2}_H ds\\
         &&+\lambda_{0}\int_{0}^{t}\|Y_{n+1}(s)-Y_n(s)\|^2_{H}ds\\
         &   &+
         \int_0^t \left(\int_{\mathbb{X}}\|G(s,v)\|_{1,H}|g(s,v)-1|\nu(dv)\right)\|Y_n(s)-Y_{n-1}(s)\|_H \|Y_{n+1}(s)-Y_n(s)\|_{H}ds\\
    &\leq&
         C\int_0^t{K(s)}\|Y_{n+1}(s)-Y_n(s)\|^{2}_H\;ds\\
         &&+C\int_0^t\|Y_{n}(s)-Y_{n-1}(s)\|^{2}_H\|f(s)\|^{2}_H ds\\
         &&+\lambda_{0}\int_{0}^{t}\|Y_{n+1}(s)-Y_n(s)\|^2_{H}ds\\
         &   &+\int_0^t \left(\int_{\mathbb{X}}\|G(s,v)\|_{1,H}|g(s,v)-1|\nu(dv)\right)\|Y_n(s)-Y_{n-1}(s)\|^{2}_H\;ds\\
         &&+\int_0^t \left(\int_{\mathbb{X}}\|G(s,v)\|_{1,H}|g(s,v)-1|\nu(dv)\right) \|Y_{n+1}(s)-Y_n(s)\|^{2}_{H}\;ds
\end{eqnarray*}

We denote $J(s)=K(s)+\|f(s)\|^2_H+\int_{\mathbb{X}}\|G(s,v)\|_{1,H}|g(s,v)-1|\nu(dv)+\lambda_{0}$, and set $a_{n}(t)=\|Y_{n}(t)-Y_{n-1}(t)\|_H^2$.

The above inequality yields that
\begin{eqnarray}\label{3.17}
\label{relation.an and an+1 (1)}
a_{n+1}(t)\leq C\int_{0}^{t} a_{n}(s) J(s)\;ds+ C\int_{0}^{t}a_{n+1}(s) J(s)\;ds,
\end{eqnarray}
and furthermore, we have
\begin{eqnarray*}
a_{n+1}(t)e^{-C\int_{0}^{t}J(s)\;ds}J(t)&\leq& C e^{-C\int_{0}^{t}J(s)\;ds}J(t)\int_{0}^{t} a_{n}(s) J(s)\;ds\\
&&+C e^{-C\int_{0}^{t}J(s)\;ds}J(t)\int_{0}^{t}a_{n+1}(s) J(s)\;ds
\end{eqnarray*}

Set $A_{n+1}(t)=\int_{0}^{t}a_{n+1}(s)J(s)\;ds$. It follows that,
\begin{eqnarray*}
\frac{d}{dt}\left( A_{n+1}(t)e^{-C\int_{0}^{t}J(s)\;ds}\right)\leq C J(t)e^{-C\int_{0}^{t}J(s)\;ds}A_{n}(t).
\end{eqnarray*}

So that,
\begin{eqnarray*}
 A_{n+1}(t)e^{-C\int_{0}^{t}J(s)\;ds}&\leq& C  \int_{0}^{t} J(s)e^{-C\int_{0}^{s}J(u)\;du}A_{n}(s)\;ds.
\end{eqnarray*}

Thus,
\begin{eqnarray*}
 A_{n+1}(t)&\leq& Ce^{C\int_{0}^{t}J(s)\;ds}\int_{0}^{t} J(s)A_{n}(s)\;ds\\
 &\leq& Ce^{C\int_{0}^{t}J(s)\;ds} A_{n}(t)\int_{0}^{t} J(s)\;ds\\
 &\leq& C_{T} A_{n}(t).
\end{eqnarray*}

It follows from (\ref{3.17}) that
\begin{eqnarray}
\label{relation.an and an+1 (2)}
a_{n+1}(t)\leq (C+C_{T})\int_{0}^{t} a_{n}(s) J(s)\;ds.
\end{eqnarray}

Iterating the above inequality, we get
\begin{eqnarray}
\label{relation.an and an+1}
a_{n+1}(t)\leq \frac{(C+C_{T})^{n}(\int_{0}^{T}J(s)\;ds)^{n}}{n!}\times \sup_{s\in[0,T]}a_1(s).
\end{eqnarray}

Therefore, we have
\begin{eqnarray*}
\sum_{n=0}^{\infty}a_{n+1}(t)<\infty.
\end{eqnarray*}
%

Hence there exists $Y\in C([0,T],H)$ such that $\lim_{n\rightarrow\infty}\sup_{s\in[0,T]}\|Y(s)-Y_n(s)\|^2_H=0$.

On the other hand, by Lemma \ref{Lemma-thm-01} and Lemma \ref{Lemma-thm-02}, there exists a unique function $Y'\in
L^2([0,T],V)\cap C([0,T],H)$
such that
    \begin{eqnarray}\label{Solution-Y'}
        \langle Y'(t),\phi\rangle
     &=&
        \langle X_0,\phi\rangle
        -
        \int_0^t\langle \mathcal{A}Y'(s),\phi\rangle ds
        +
        \int_0^t\langle\sigma(s,Y(s))f(s),\phi\rangle ds\nonumber\\
       & &+
        \int_0^t\int_{\mathbb{X}}\langle G(s,Y(s),v)(g(s,v)-1),\phi\rangle\nu(dv)ds,\ \phi\in V.
    \end{eqnarray}

Using the same argument  leading to  (\ref{relation.an and an+1}),
we have
   \begin{eqnarray}\label{Y'-Y}
  \lim_{n\rightarrow\infty} \sup_{s\in[0,T]}\|Y'(s)-Y_n(s)\|^2_H=0.
\end{eqnarray}

Hence $Y'=Y,\ t\in[0,T]$ is a solution to (\ref{main-thm-01-eq}).

We have proved the existence of the solution.

{\bf Uniqueness}:
Assume $X$ and $X'$ are two solutions of equation (\ref{main-thm-01-eq}).  Then, as the proof of (\ref{relation.an and an+1 (2)}),
we have,
\begin{eqnarray}\label{X'-X}
    \sup_{s\in[0,T]}\|X'(s)-X(s)\|^2_H
& \leq&
    (C+C_{T})\int_{0}^{T}\|X'(s)-X(s)\|^2_H J(s)\;ds \nonumber
\end{eqnarray}
By Gronwall's inequality, we conclude $X'=X$.

Finally we prove the estimate  (\ref{main-thm-01-ineq}). By Lemma \ref{Lemma-thm-01}, we have
    \begin{eqnarray}\label{proof-main-thm-01-02-eq-01}
     &  &\|\widetilde{X}^q_t\|^2_H+\alpha\int_0^t\|\widetilde{X}^q_s\|^2_Vds\nonumber\\
     &\leq&
          \|X_0\|^2_H
            +
            2\int_0^t\|\sigma(s,\widetilde{X}^q_s)\|_{L_2(H)}\|f(s)\|_H\|\widetilde{X}^q_s\|_H ds\nonumber\\
           & &+
          2\int_0^t\int_\mathbb{X}\|G(s,\widetilde{X}^q_s,v)\|_H\|\widetilde{X}^q_s\|_H|g(s,v)-1|\nu(dv)ds\nonumber
          +\lambda_{0}\int_{0}^{t}\|\widetilde{X}^q_s\|_{H}^{2}\;ds\\
     &=&
          \|X_0\|^2_H
            +
            2\int_0^t\sqrt{K(s)}\|f(s)\|_H\|\widetilde{X}^q_s\|^2_H ds\nonumber\\
            & &+
          2\int_0^t\int_\mathbb{X}\frac{\|G(s,\widetilde{X}^q_s,v)\|_H}{1+\|\widetilde{X}^q_s\|_H}(1+\|\widetilde{X}^q_s\|_H)\|\widetilde{X}^q_s\|_H|g(s,v)-1|\nu(dv)ds\nonumber\\
          &&+\lambda_{0}\int_{0}^{t}\|\widetilde{X}^q_s\|_{H}^{2}\;ds\\
     &\leq&
          \|X_0\|^2_H
            +
          C\int_0^t\int_\mathbb{X}\|G(s,v)\|_{0,H}|g(s,v)-1|\nu(dv)ds\nonumber\\
            &   &+
          2\int_0^t\|\widetilde{X}^q_s\|^2_H\Big[\lambda_{0}+K(s)+\|f(s)\|^2_H+\int_\mathbb{X}\|G(s,v)\|_{0,H}|g(s,v)-1|\nu(dv)\Big]ds
             \end{eqnarray}
By Gronwall's inequality,
        \begin{eqnarray}\label{proof-main-thm-01-02-eq-02}
             &  & \sup_{s\in[0,t]}\|\widetilde{X}^q_s\|^2_H\nonumber\\
             &\leq&
              C\Big[\|X_0\|^2_H
              +
             \int_0^t\int_\mathbb{X}\|G(s,v)\|_{0,H}|g(s,v)-1|\nu(dv)ds\Big]\\
              &   &\times
             \exp\big(C\Big[\lambda_{0}+\int_0^tK(s)ds+\int_0^t\|f(s)\|^2_Hds+\int_0^t\int_\mathbb{X}\|G(s,v)\|_{0,H}|g(s,v)-1|\nu(dv)ds\Big]\Big).\nonumber
             \end{eqnarray}
By Lemma \ref{Lemma-Condition-0,H-1,H},  (\ref{proof-main-thm-01-02-eq-01}) and  (\ref{proof-main-thm-01-02-eq-02}),
we obtain (\ref{main-thm-01-ineq}).

\end{proof}

\vspace{3mm}

We can now present the main large deviations result. Recall that for $q=(f,g)\in\mathbb{S}$,
$\nu_T^g(dsdv)=g(s,v)\nu(dv)ds$.
Define
     \begin{eqnarray}\label{LDP-eq-01}
       \mathcal{G}^0(\int_0^\cdot f(s)ds,\nu^g_T)=\widetilde{X}^q\ \text{for}\ q=(f,g)\in\mathbb{S}\text{\ as\  given\ in\ Theorem}\
       \ref{Main-thm-01}.
     \end{eqnarray}
Let $I:D([0,T],H)\rightarrow [0,\infty]$ be defined as in (\ref{Rate function I}).

\begin{thm}\label{Main-thm-02}
 Suppose that Condition \ref{Condition G} and Condition \ref{Condition Exponential Integrability}
 hold. Then $I$ is a rate function on $D([0,T],H)$, and the family $\{X^\epsilon\}_{\epsilon>0}$ satisfies
 a large deviation principle on $D([0,T],H)$ with rate function $I$.
\end{thm}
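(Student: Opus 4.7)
The plan is to invoke Theorem \ref{LDP-main-01}, so I would verify Condition \ref{LDP} (with $\mathcal{U}^N$ replaced by $\tilde{\mathcal{U}}^N$) for the choice $\mathbb{U}=D([0,T],H)$, with $\mathcal{G}^\epsilon(\sqrt{\epsilon}\beta,\epsilon N^{\epsilon^{-1}}):=X^\epsilon$ the unique $H$-valued solution of (\ref{SPDE}) and $\mathcal{G}^0$ given by (\ref{LDP-eq-01}) through Theorem \ref{Main-thm-01}.

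For part (a): given $(f_n,g_n)\to (f,g)$ in $\bar{S}^N$, set $\widetilde{X}^n:=\mathcal{G}^0(\int_0^\cdot f_n(s)ds,\nu_T^{g_n})$ and $\widetilde{X}:=\mathcal{G}^0(\int_0^\cdot f(s)ds,\nu_T^g)$. Subtracting the variational formulations and using Lemma \ref{Lemma-thm-01} together with the coercivity (\ref{condition-lower bounds of A}) and the Lipschitz part of Condition \ref{Condition G}, I expect to obtain an integral inequality of the form
\[
\|\widetilde{X}^n_t-\widetilde{X}_t\|_H^2\leq C\int_0^t\|\widetilde{X}^n_s-\widetilde{X}_s\|_H^2 J_n(s)\,ds+\varepsilon_n(t),
\]
where $J_n$ is integrable uniformly in $n$ by Lemma \ref{Lemma-Condition-0,H-1,H} and the error $\varepsilon_n(t)\to 0$ uniformly in $t$, since $f_n\to f$ in $L^2([0,T],H)$ and $g_n\to g$ in the compact topology of $S^N$ (the convergence against bounded/integrable test functions being controlled by the uniform bounds of Lemma \ref{Lemma-Condition-0,H-1,H}). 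Gronwall then gives convergence in $C([0,T],H)$, which is stronger than convergence in $D([0,T],H)$.

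For part (b): writing $\bar{X}^\epsilon:=\mathcal{G}^\epsilon(\sqrt{\epsilon}\beta+\int_0^\cdot\psi_\epsilon(s)ds,\epsilon N^{\epsilon^{-1}\varphi_\epsilon})$, the controlled process satisfies
\begin{align*}
\bar{X}^\epsilon_t &= X_0 - \int_0^t\mathcal{A}\bar{X}^\epsilon_s\,ds + \sqrt{\epsilon}\int_0^t\sigma(s,\bar{X}^\epsilon_s)\,d\beta(s) + \int_0^t\sigma(s,\bar{X}^\epsilon_s)\psi_\epsilon(s)\,ds \\
&\quad + \epsilon\int_0^t\!\!\int_\mathbb{X} G(s,\bar{X}^\epsilon_{s-},v)\,\widetilde{N}^{\epsilon^{-1}\varphi_\epsilon}(ds,dv) + \int_0^t\!\!\int_\mathbb{X} G(s,\bar{X}^\epsilon_s,v)(\varphi_\epsilon(s,v)-1)\,\nu(dv)\,ds,
\end{align*}
and I must prove $\bar{X}^\epsilon\Rightarrow\widetilde{X}^{(\psi,\varphi)}$ in $D([0,T],H)$. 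I would follow the usual three-step recipe: (i) uniform $L^2$ energy estimates via the coercivity, the Burkholder--Davis--Gundy inequality for both the Brownian and compensated Poisson stochastic integrals, the growth bound in Condition \ref{Condition G}, and the controlled quantities of Lemma \ref{Lemma-Condition-0,H-1,H}; (ii) tightness of the laws of $\bar{X}^\epsilon$ on $D([0,T],H)$; (iii) identification of any weak limit: passing to the limit in the weak formulation, the $\sqrt{\epsilon}$ and $\epsilon$ terms vanish (using the energy estimates from (i)), the drift terms converge by the assumed convergence $u_\epsilon\Rightarrow u$ and the continuity established in part (a), and the limit satisfies (\ref{SPDE-determined equation}); the uniqueness in Theorem \ref{Main-thm-01} then forces the limit to equal $\widetilde{X}^{(\psi,\varphi)}$.

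The principal obstacle is step (ii), as the authors themselves flag in the introduction. An Aldous-type criterion would immediately yield tightness in the weaker space $D([0,T],V')$, but lifting to $D([0,T],H)$ requires additional compactness, and both the unbounded operator $\mathcal{A}$ and the jumps make a direct modulus-of-continuity argument awkward. I would implement the splitting strategy announced in the introduction: for arbitrarily small $t_0>0$, on $[0,t_0]$ only a crude $H$-norm modulus estimate is needed because the interval is short; on $[t_0,T]$ the uniform $L^2([0,T],V)$ bound from (i) ensures $\bar{X}^\epsilon_{t_0}\in V$ for some $t_0$ in any short initial interval, after which the compact embedding $V\hookrightarrow H$ (together with the smoothing from $\mathcal{A}$) can be exploited to obtain the stronger modulus required for tightness in $D([t_0,T],H)$. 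Letting $t_0\downarrow 0$ and combining with the initial control yields tightness on the whole interval; together with step (iii) this verifies Condition \ref{LDP}, and Theorem \ref{LDP-main-01} delivers the claimed large deviation principle.
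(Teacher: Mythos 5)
Your overall architecture (invoke Theorem \ref{LDP-main-01} with $\mathbb{U}=D([0,T],H)$ and verify the two halves of Condition \ref{LDP}) is exactly the paper's, but both halves as you sketch them contain genuine gaps. For part (a), the Gronwall-on-the-difference argument does not close. After subtracting the two equations and testing against $\widetilde{X}^{q_n}_s-\widetilde{X}^q_s$, the jump drift produces the cross term
$\int_0^t\int_{\mathbb{X}}\langle G(s,\widetilde{X}^q_s,v),\widetilde{X}^{q_n}_s-\widetilde{X}^q_s\rangle\,(g_n(s,v)-g(s,v))\,\nu(dv)\,ds$,
and your claim that this error tends to $0$ uniformly ``since $g_n\to g$ in the compact topology of $S^N$'' is unjustified: that convergence is weak convergence of the measures $\nu_T^{g_n}$, i.e.\ it controls integrals of \emph{fixed} test functions over $\mathbb{X}_T$, whereas here the integrand depends on $n$ (and, if you instead try to absorb it via Cauchy--Schwarz, you need pointwise-in-$s$ convergence of the $\mathbb{X}$-integral, which the product-space weak convergence also does not give). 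The paper avoids this circularity by a compactness-first argument (Proposition \ref{Prop-1}): it writes $\widetilde{X}^{q_n}$ in mild form, shows the forcing terms are uniformly integrable in $L^1([0,T],H)$, invokes Lemma \ref{lem-thm2-01} (compactness of the semigroup) to get relative compactness of $\{\widetilde{X}^{q_n}\}$ in $C([0,T],H)$, and only then passes to the limit along a convergent subsequence, at which point every test integrand is $n$-independent and Lemma \ref{lem-thm2-02}(a) applies; uniqueness from Theorem \ref{Main-thm-01} identifies the limit.

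For part (b), your steps (i) and (iii) match the paper (uniform second-moment bounds, Skorokhod representation, vanishing of the $\sqrt{\epsilon}$ and $\epsilon$ martingale terms, identification via uniqueness), but your tightness mechanism in (ii) is not viable as stated. The paper nowhere assumes the embedding $V\hookrightarrow H$ is compact, and the assertion that $\bar{X}^\epsilon_{t_0}\in V$ at some (necessarily random, $\epsilon$-dependent) time, followed by ``smoothing from $\mathcal{A}$,'' is not something you can propagate for a jump process; this is precisely the hard point. What the paper actually does is apply Jakubowski's criterion (Lemma \ref{tight-H-cadlag}): tightness of the real-valued projections $\langle\tilde{X}^\epsilon,\phi\rangle$, $\phi\in H_{\mathcal{A}^*}$, via Aldous's criterion (Lemma \ref{tight-R-cadlag}), \emph{plus} uniform smallness of the tails $\sup_t\sum_{i\geq k}\langle\tilde{X}^\epsilon_t,e_i\rangle^2$ in the eigenbasis of $\mathcal{A}^*$. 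The time-splitting enters only in the tail estimate: on $[0,t_0]$ one uses (\ref{estimation-02}), which is small because $t_0$ is small and $\sum_{i\geq k}\langle X_0,e_i\rangle^2\to0$; on $[t_0,T]$ one uses (\ref{estimation-03}), whose bound carries the factor $e^{-2\zeta_k t_0}\to0$ as $k\to\infty$ coming from the divergence of the eigenvalues $\zeta_k$. You would need to replace your compact-embedding/smoothing argument by this (or an equivalent) quantitative tail estimate to complete the proof.
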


The rest of the paper is devoted to the proof of this theorem. According to Theorem \ref{LDP-main-01}, we need to prove that Condition \ref{LDP} is fulfilled. The verification of Condition \ref{LDP} a) will be given by Proposition \ref{Prop-1}. Condition \ref{LDP} b) will be established in Theorem \ref{Prop-2} and a number of preparing lemmas.


Let $T_t,t\geq0$ denote the semigroup generated by $-\mathcal{A}$. It is easy to see that $T_t,t\geq0$ are
    compact operators. For $f\in L^1([0,T],H)$, denote the operator
    \begin{eqnarray*}
        Rf(t)=\int_0^tT_{t-s}f(s)ds,\ t\geq0,
    \end{eqnarray*}
    which is the mild solution of the equation:
    \begin{eqnarray*}
       Z(t)=-\int_0^t\mathcal{A}Z(s)ds+\int_0^tf(s)ds.
    \end{eqnarray*}

The proof of the following lemma was given in \cite{Rockner-Zhang}.
\begin{lem}\label{lem-thm2-01}
    If $\mathcal{D}\subset L^1([0,T],H)$ is uniformly integrable, then $\mathcal{Y}=R(\mathcal{D})$ is
    relatively compact in $C([0,T],H)$.
\end{lem}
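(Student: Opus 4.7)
The plan is to apply the Arzel\`a--Ascoli theorem on $C([0,T],H)$. Two ingredients must be verified: pointwise relative compactness of $\{Rf(t): f \in \mathcal{D}\}$ in $H$ for each fixed $t \in [0,T]$, and equicontinuity of the family $\mathcal{Y}$ in the time variable. Throughout the argument I will exploit two facts about the semigroup: that $\|T_r\|_{L(H)} \leq M$ for some $M$ on $[0,T]$ (since $(T_r)_{r \geq 0}$ is a $C_0$-semigroup), and that $T_r$ is compact for every $r > 0$. I will also use that uniform integrability of $\mathcal{D}$ in $L^1([0,T],H)$ entails in particular a uniform bound $\sup_{f \in \mathcal{D}} \|f\|_{L^1([0,T],H)} < \infty$.

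For pointwise relative compactness, fix $t \in (0,T]$ and $\delta \in (0,t)$, and split
\[
Rf(t) \;=\; T_\delta \!\int_0^{t-\delta} T_{t-\delta-s} f(s)\, ds \;+\; \int_{t-\delta}^t T_{t-s} f(s)\, ds.
\]
The set $\{\int_0^{t-\delta} T_{t-\delta-s} f(s)\, ds : f \in \mathcal{D}\}$ is bounded in $H$ (by $M\|f\|_{L^1}$), so applying the compact operator $T_\delta$ produces a relatively compact subset of $H$. The tail term has $H$-norm at most $M \int_{t-\delta}^t \|f(s)\|_H\, ds$, which tends to $0$ as $\delta \downarrow 0$ uniformly over $\mathcal{D}$ by uniform integrability. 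Hence $\{Rf(t)\}_{f \in \mathcal{D}}$ is totally bounded, and thus relatively compact, in $H$. The case $t = 0$ is trivial since $Rf(0) = 0$.

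For equicontinuity, take $0 \leq s < t \leq T$ and decompose
\[
Rf(t) - Rf(s) \;=\; \int_s^t T_{t-u} f(u)\, du \;+\; \int_0^s \bigl(T_{t-u} - T_{s-u}\bigr) f(u)\, du.
\]
The first term is bounded in $H$ by $M \int_s^t \|f(u)\|_H\, du$, which is small uniformly in $f \in \mathcal{D}$ when $|t-s|$ is small by uniform integrability. For the second term, I further split the integral at $(s-\delta)_+$. The near-diagonal piece is bounded by $2M \int_{(s-\delta)_+}^s \|f(u)\|_H\, du$, again small uniformly by uniform integrability when $\delta$ is small. For the far-from-diagonal piece I invoke the crucial fact that compactness of $T_r$ for every $r > 0$ upgrades strong continuity to norm continuity of $r \mapsto T_r$ on $(0, \infty)$; consequently
\[
\sup_{r \in [\delta, T]} \bigl\| T_{r+(t-s)} - T_r \bigr\|_{L(H)} \;\longrightarrow\; 0 \quad \text{as} \quad t - s \to 0,
\]
and this operator-norm estimate dominates the far-diagonal piece by that supremum times $\|f\|_{L^1}$, uniformly over $f \in \mathcal{D}$. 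Combining the three bounds yields the desired equicontinuity.

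The main obstacle is the equicontinuity step, because uniform integrability alone is not strong enough to produce a uniform modulus of continuity for $u \mapsto T_{s-u} f(u)$ when strong continuity of the semigroup is only pointwise. What rescues the argument is the norm continuity of $r \mapsto T_r$ away from zero, a classical consequence of compactness of each $T_r$. Once pointwise relative compactness and equicontinuity are both established, the Arzel\`a--Ascoli theorem delivers relative compactness of $\mathcal{Y} = R(\mathcal{D})$ in $C([0,T],H)$.
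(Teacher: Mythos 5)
Your proof is correct, and it is essentially the argument of record: the paper does not prove this lemma itself but defers to \cite{Rockner-Zhang}, where the proof runs along the same standard lines you follow --- Arzel\`a--Ascoli in $C([0,T],H)$, pointwise relative compactness via the splitting $Rf(t)=T_\delta\int_0^{t-\delta}T_{t-\delta-s}f(s)\,ds+\int_{t-\delta}^t T_{t-s}f(s)\,ds$ with the compact factor $T_\delta$ and a uniformly small tail, and equicontinuity via uniform absolute continuity near the diagonal together with norm continuity of $r\mapsto T_r$ on $(0,\infty)$ (the classical consequence of compactness of $T_r$, $r>0$). You also correctly flag the one delicate point --- that strong continuity alone would not give the uniform modulus, so the upgrade to operator-norm continuity away from zero is exactly what is needed --- and your use of the uniform $L^1$ bound is consistent with the characterization (I)--(II) of uniform integrability that the paper itself employs in the proof of Proposition 3.7.
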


We also need the following lemma, the proof of which can be found in \cite{Budhiraja-Chen-Dupuis}.
\begin{lem}\label{lem-thm2-02}
Let
$h:[0,T]\times\mathbb{X}\rightarrow\mathbb{R}$ be a measurable function such that
   \begin{eqnarray*}
     \int_{\mathbb{X}_T}|h(s,v)|^2\nu(dv)ds<\infty,
   \end{eqnarray*}
and for all $\delta\in(0,\infty)$
   \begin{eqnarray*}
     \int_{E}\exp(\delta|h(s,v)|)\nu(dv)ds<\infty,
   \end{eqnarray*}
for all $E\in\mathcal{B}([0,T]\times\mathbb{X})$ satisfying $\nu_T(E)<\infty$.

a). Fix $N\in\mathbb{N}$, and let $g_n,g\in S^N$ be such that $g_n\rightarrow g$ as $n\rightarrow\infty$. Then
   \begin{eqnarray*}
     \lim_{n\rightarrow\infty}\int_{\mathbb{X}_T}h(s,v)(g_n(s,v)-1)\nu(dv)ds=\int_{\mathbb{X}_T}h(s,v)(g(s,v)-1)\nu(dv)ds;
   \end{eqnarray*}

b). Fix $N\in\mathbb{N}$. Given $\epsilon>0$, there exists a compact set $K_\epsilon\subset\mathbb{X}$, such that
    \begin{eqnarray*}
      \sup_{g\in S^N}\int_{[0,T]}\int_{K_\epsilon^c}|h(s,v)||g(s,v)-1|\nu(dv)ds\leq\epsilon.
    \end{eqnarray*}
\end{lem}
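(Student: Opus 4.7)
The plan is to prove part (b) first, since it provides the uniform tail control needed to pass to the limit in part (a), and then deduce part (a) by a standard truncation plus weak-convergence argument.

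For part (b), I would split the integrand according to the size of $|g-1|$. Fix $\beta=1$ and decompose
\[
\int_{[0,T]\times K^c}|h||g-1|\,d\nu_T \le I_1(g,K) + I_2(g,K),
\]
where $I_1$ is the integral over $\{|g-1|\le 1\}$ and $I_2$ the integral over $\{|g-1|>1\}$. For $I_1$, inequality (c) gives $(g-1)^2\le c_2(1)l(g)$, so Cauchy--Schwarz yields $I_1 \le \sqrt{c_2(1)N}\bigl(\int_{[0,T]\times K^c}|h|^2\,d\nu_T\bigr)^{1/2}$, which is uniformly small in $g\in S^N$ once $K$ is large, using the $L^2$ integrability of $h$. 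For $I_2$, note that on $\{|g-1|>1\}$ (and $g\ge 0$) one has $g\ge 2$, hence $l(g)\ge l(2)>0$. This gives the crucial uniform bound $\nu_T(\{|g-1|>1\})\le N/l(2)$. Using inequality (b), $|g-1|\le c_1(1)l(g)$ on this set, together with inequality (a) applied to $|h|g$ (say $a=|h|$, $b=g$), one reduces $I_2$ to controlling $C(\sigma)\int_{K^c\cap\{|g-1|>1\}}e^{\sigma|h|}\,d\nu_T + N/\sigma$. Taking $\sigma$ large handles the second term; the first is then controlled by splitting $\{|h|\le M\}$ vs.\ $\{|h|>M\}$, where for $M$ large $\nu_T(\{|h|>M\})<\infty$ by Chebyshev (since $|h|\in L^2(\nu_T)$), so the local exponential integrability hypothesis applies and lets me choose $K_\epsilon$ large to make the tail small.

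For part (a), given $\epsilon>0$ use part (b) to select a compact $K_\epsilon$ so that the tail integrals over $[0,T]\times K_\epsilon^c$ are each $<\epsilon/3$, uniformly in $n$. On $[0,T]\times K_\epsilon$ (of finite $\nu_T$-measure) approximate $h\mathbf{1}_{[0,T]\times K_\epsilon}$ in $L^2(\nu_T)$ by $h_M\in C_c([0,T]\times K_\epsilon)$. Since $g_n\to g$ in the topology of $S^N$ is equivalent to $\nu_T^{g_n}\to\nu_T^g$ in the vague topology of $\mathcal{M}_{FC}(\mathbb{X}_T)$, we have $\int h_M(g_n-1)\,d\nu_T\to\int h_M(g-1)\,d\nu_T$ for each $M$. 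The $h-h_M$ approximation error is bounded, uniformly in $g\in S^N$, by exactly the Cauchy--Schwarz/inequality-(c) estimate used for $I_1$. Sending $n\to\infty$ first and $M\to\infty$ next yields part (a).

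The main obstacle is the estimate for $I_2$ in part (b): the natural Young-type bound $|h|g\le C(\sigma)e^{\sigma|h|}+l(g)/\sigma$ produces an integral of $e^{\sigma|h|}$ over $K^c$, which is a priori infinite because $\nu_T(K^c)$ can be infinite. The trick is to absorb this via the uniform finite-measure bound $\nu_T(\{|g-1|>1\})\le N/l(2)$ coming from the entropy constraint $L_T(g)\le N$, so that the exponential integrability need only be invoked on sets of bounded $\nu_T$-measure (more precisely, on $\{|h|>M\}$, whose $\nu_T$-measure is finite by Chebyshev and $|h|\in L^2(\nu_T)$). This coupling between the entropy bound and the exponential integrability is what makes the tail estimate uniform in $g\in S^N$.
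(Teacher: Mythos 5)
Your overall strategy for part (b) --- decompose according to the size of $|g-1|$, use the entropy bound $L_T(g)\le N$ to get the uniform control $\nu_T(\{|g-1|>1\})\le N/l(2)$, and invoke the exponential integrability of $h$ only on sets of finite $\nu_T$-measure --- is the right one, and is essentially the route of the proof in \cite{Budhiraja-Chen-Dupuis} to which the paper defers. But there is a genuine gap in your estimate of $I_2$. After applying inequality (a) you must control $C(\sigma)\int_{([0,T]\times K^c)\cap\{|g-1|>1\}}e^{\sigma|h|}\,d\nu_T$, and your splitting only disposes of the piece on $\{|h|>M\}$. On the piece $\{|h|\le M\}$ you are left with at most $C(\sigma)e^{\sigma M}\,\nu_T\big(([0,T]\times K^c)\cap\{|g-1|>1\}\big)$, and since $e^{\sigma|h|}\ge 1$ this piece is also bounded \emph{below} by $C(\sigma)\,\nu_T\big(([0,T]\times K^c)\cap\{|g-1|>1\}\big)$. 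That quantity does not tend to $0$ as $K\uparrow\mathbb{X}$ uniformly over $g\in S^N$: for any compact $K$, take $g=1+2\cdot 1_A$ with $A\subset[0,T]\times K^c$ and $\nu_T(A)=N/l(3)$; then $g\in S^N$ and the entire set $\{|g-1|>1\}$ lies outside $[0,T]\times K$. So once $\sigma$ and $M$ are fixed, your bound for $I_2$ retains a positive constant of order $C(\sigma)e^{\sigma M}N/l(2)$ that no choice of $K_\epsilon$ removes. Your appeal to inequality (b) with $\beta=1$ does not help here, since $c_1(1)$ is just a fixed constant.

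The missing ingredient is inequality (b) with a \emph{large} threshold $\beta$, for which $c_1(\beta)\to 0$. Split the region $\{|h|\le M\}$ further: on $\{|h|\le M\}\cap\{|g-1|>\beta\}$ one has $|h||g-1|\le M c_1(\beta)l(g)$, so this contribution is at most $Mc_1(\beta)N$, small for $\beta$ large, uniformly in $K$ and $g$; on $\{|h|\le M\}\cap\{|g-1|\le\beta\}$, Cauchy--Schwarz with $|g-1|^2\le c_2(\beta)l(g)$ gives the bound $(c_2(\beta)N)^{1/2}\bigl(\int_{[0,T]\times K^c}|h|^2\,d\nu_T\bigr)^{1/2}$, which vanishes as $K\uparrow\mathbb{X}$. (One could instead salvage your route by using the sharp form of inequality (a), namely $ab\le\sigma^{-1}(e^{\sigma a}-1)+\sigma^{-1}l(b)$, and choosing $M$ \emph{small} after $\sigma$ so that $\sigma^{-1}(e^{\sigma M}-1)\le 2M$; but as written, with $M$ taken large, the argument does not close.) The same defect propagates into your part (a): the approximation error $\sup_{g\in S^N}\int|h-h_M||g-1|\,d\nu_T$ is controlled by your Cauchy--Schwarz/inequality-(c) estimate only on $\{|g-1|\le\beta\}$, and the complementary region again requires either the $c_1(\beta)\to0$ device or a preliminary truncation of $h$ on its own level sets $\{|h|>M\}$ (where $\nu_T(\{|h|>M\})<\infty$ and the exponential bound applies). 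With these repairs both parts go through; without them the key uniformity over $g\in S^N$ is not established.
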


\vspace{2mm}

We now proceed to verify the first part of Condition \ref{LDP}. Recall the map $\mathcal{G}^0$ defined by
(\ref{LDP-eq-01}).

\begin{prp}\label{Prop-1}
 Fix $N\in\mathbb{N}$, and let $q_n=(f_n,g_n),q=(f,g)\in \bar{S}^N$ be such that $q_n\rightarrow q$ as $n\rightarrow\infty$. Then
     \begin{eqnarray*}
       \mathcal{G}^0(\int_0^\cdot f_n(s)ds,\nu_T^{g_n})\rightarrow\mathcal{G}^0(\int_0^\cdot f(s)ds,\nu_T^{g})\quad \text{in}\quad C([0,T],H).
     \end{eqnarray*}
\end{prp}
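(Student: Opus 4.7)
The plan is to show that $\widetilde{X}^n := \mathcal{G}^0(\int_0^\cdot f_n(s)\,ds,\nu_T^{g_n})$ is relatively compact in $C([0,T],H)$ and that every subsequential limit satisfies (\ref{main-thm-01-eq}) with control $q=(f,g)$; uniqueness from Theorem \ref{Main-thm-01} will then force $\widetilde{X}^n \to \widetilde{X}^q$ along the whole sequence. I would work in the mild formulation $\widetilde{X}^n_t = T_t X_0 + \int_0^t T_{t-s}F_n(s)\,ds$ with
$$F_n(s) = \sigma(s,\widetilde{X}^n_s)f_n(s) + \int_{\mathbb{X}} G(s,\widetilde{X}^n_s,v)(g_n(s,v)-1)\,\nu(dv),$$
so that Lemma \ref{lem-thm2-01} reduces relative compactness in $C([0,T],H)$ to uniform integrability of $\{F_n\}$ in $L^1([0,T],H)$.

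The uniform bound $C_N := \sup_n \sup_{t\leq T}\|\widetilde{X}^n_t\|_H^2 < \infty$ from (\ref{main-thm-01-ineq}) is the starting input. For the Brownian part, Cauchy-Schwarz together with Condition \ref{Condition G}(1) and $\int_0^T\|f_n(s)\|_H^2\,ds\leq 2N$ give $\int_A\|\sigma(s,\widetilde{X}^n_s)f_n(s)\|_H\,ds$ bounded by a multiple of $\bigl(\int_A K(s)\,ds\bigr)^{1/2}$, which vanishes uniformly in $n$ as $|A|\to 0$ since $K\in L^1$. For the Poisson part, Lemma \ref{lem-thm2-02}(b) applied to $h(s,v) = \|G(s,v)\|_{0,H}$ (square-integrable on $\mathbb{X}_T$ by Lemma \ref{Lemma-Condition-0,H-1,H} with $g\equiv 1$, exponentially integrable by Remark \ref{Remark-01}) restricts the $v$-integration uniformly in $n$ to a region $[0,T]\times K_\epsilon$ of finite $\nu_T$-measure; on that region I would use (\ref{Inequality-001}) with $a=\|G(s,v)\|_{0,H}$, $b=g_n(s,v)$, choosing $\sigma$ large so that the $l(g_n)$-term is small uniformly (since $L_T(g_n)\leq N$), leaving an integrable exponential that vanishes over small $A$.

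Having extracted a subsequential limit $\widetilde{X}^{n_k}\to Y$ in $C([0,T],H)$, I would pass to the limit in (\ref{main-thm-01-eq}) tested against $\phi\in V$. For the $\sigma$-term, the decomposition $\sigma(s,\widetilde{X}^{n_k}_s)f_{n_k}(s) - \sigma(s,Y_s)f(s) = [\sigma(s,\widetilde{X}^{n_k}_s)-\sigma(s,Y_s)]f_{n_k}(s) + \sigma(s,Y_s)[f_{n_k}(s)-f(s)]$ separates a Lipschitz piece that vanishes by Condition \ref{Condition G}(2) and the uniform $H$-convergence of $\widetilde{X}^{n_k}$, from a residual piece that vanishes because $f_{n_k}\rightharpoonup f$ weakly in $L^2([0,T],H)$ while $s\mapsto\sigma(s,Y_s)^{\ast}\phi$ lies in $L^2([0,T],H)$. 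The $G$-term is split in the same way: the Lipschitz-in-$x$ piece is dispatched by Condition \ref{Condition G}(2), Lemma \ref{Lemma-Condition-0,H-1,H}, and the uniform convergence of $\widetilde{X}^{n_k}$, while the residual $\int_0^t\int_{\mathbb{X}}\langle G(s,Y_s,v),\phi\rangle\,(g_{n_k}-g)(s,v)\,\nu(dv)\,ds \to 0$ by Lemma \ref{lem-thm2-02}(a) applied to the scalar $h(s,v)=\langle G(s,Y_s,v),\phi\rangle$, whose integrability hypotheses follow from Conditions \ref{Condition G} and \ref{Condition Exponential Integrability} combined with the uniform $H$-bound on $Y$. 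Uniqueness in Theorem \ref{Main-thm-01} then identifies $Y=\widetilde{X}^q$, and since every convergent subsequence shares this limit the full sequence converges.

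The main obstacle is the identification step: the controls $f_n$ and $g_n$ converge only weakly (respectively in the $S^N$-topology), so passing to the limit in the nonlinear products $\sigma(s,\widetilde{X}^n_s)f_n(s)$ and $G(s,\widetilde{X}^n_s,v)(g_n(s,v)-1)$ requires coupling the strong $C([0,T],H)$-convergence of the state, produced by the compactness step, with the equi-integrability and convergence content of Lemmas \ref{lem-thm2-01} and \ref{lem-thm2-02}. The add-and-subtract splitting above is what makes this coupling effective.
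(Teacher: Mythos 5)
Your proposal follows essentially the same route as the paper's proof: relative compactness via the mild formulation and Lemma \ref{lem-thm2-01} reduced to uniform integrability of the forcing terms (checked exactly as you describe, with Lemma \ref{lem-thm2-02}(b) plus (\ref{Inequality-001}) for the jump part and Cauchy--Schwarz for the Brownian part), followed by the same add-and-subtract identification of the limit using the Lipschitz condition, Lemma \ref{lem-thm2-02}(a), and uniqueness from Theorem \ref{Main-thm-01}. The one place you diverge is the residual term $\int_0^t\langle\sigma(s,Y_s)(f_{n}-f)(s),\phi\rangle ds$: the paper bounds it by $(\int_0^T\|f_n-f\|_H^2ds)^{1/2}$, which presumes strong $L^2$ convergence of $f_n$, whereas your pairing of weak $L^2$ convergence against the fixed $L^2$ function $s\mapsto\sigma(s,Y_s)^{*}\phi$ is the more robust argument given that $\tilde S^N$ is compact only in the weak topology.
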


\begin{proof}

Firstly, we prove that $\{\mathcal{G}^0(\int_0^\cdot f_n(s)ds,\nu_T^{g_n})\}_{n\in\mathbb{N}}$ is relatively compact in $C([0,T],H)$.

By Theorem \ref{Main-thm-01} and the relation between mild solution and weak solution,
\begin{eqnarray}
&  &  \mathcal{G}^0(\int_0^\cdot f_n(s)ds,\nu_T^{g_n})(t)\\
& =&
  T(t)X_0
+
\int_0^tT(t-s)\Big[\sigma(s,\widetilde{X}^{q_n}(s))f_n(s)+\int_{\mathbb{X}}G(s,\widetilde{X}^{q_n}(s),v)(g_n(s,v)-1)\nu(dv)\Big]ds.\nonumber
\end{eqnarray}

By Lemma \ref{lem-thm2-01}, it is sufficient to prove that
$$
  \mathcal{D}=\Big\{\sigma(\cdot,\widetilde{X}^{q_n}(\cdot))f_n(\cdot)+\int_{\mathbb{X}}G(\cdot,\widetilde{X}^{q_n}(\cdot),v)(g_n(\cdot,v)-1)\nu(dv)\Big\}\subset
  L^1([0,T],H)
$$
is uniformly integrable.

We know that $\mathcal{D}$ is uniformly integrable in $L^1([0,T],H)$ iff

(I) There exists a finite constant $\widehat{K}$ such that,
$\int_0^T\|h(s)\|_Hds\leq\widehat{K}$, for every $h\in\mathcal{D}$,;

(II) For every $\eta>0$ there exists $\delta>0$ such that, for every measurable subset $A\subset[0,T]$ with
$\lambda_T(A)\leq\delta$
and every $h\in\mathcal{D}$, $\int_A\|h(s)\|_Hds\leq\eta$.

In fact (I) follows from Lemma \ref{Lemma-thm-02} and Theorem \ref{Main-thm-01}.
%
%
We need to check (II).
For any $n\in\mathbb{N}$ and any measurable subset $A\subset[0,T]$,
   \begin{eqnarray}\label{pf-main-thm-02-UI-02-02}
          &  & \int_A\|\int_{\mathbb{X}}G(s,\widetilde{X}^{q_n}(s),v)(g_n(s,v)-1)\nu(dv)\|_{H}ds\nonumber\\
    &\leq&
          \int_A\int_{\mathbb{X}}\|G(s,\widetilde{X}^{q_n}(s),v)\|_{H}|g_n(s,v)-1|\nu(dv)ds\nonumber\\
    &\leq&
          \int_A\int_{\mathbb{X}}\frac{\|G(s,\widetilde{X}^{q_n}(s),v)\|_{H}}{1+\|\widetilde{X}^{q_n}(s)\|_H}(1+\|\widetilde{X}^{q_n}(s)\|_H)|g_n(s,v)-1|\nu(dv)ds\nonumber\\
    &\leq&
          (1+\sup_{s\in[0,T]}\|\widetilde{X}^{q_n}(s)\|_H)\int_A\int_{\mathbb{X}}\|G(s,v)\|_{0,H}|g_n(s,v)-1|\nu(dv)ds.\nonumber
  \end{eqnarray}
  Given $\epsilon>0$, by Lemma \ref{lem-thm2-02} we can find a compact subset $K_\epsilon\subset \mathbb{X}$ such that
  \begin{eqnarray}\label{T2-1}
  \int_A\int_{K_\epsilon^c}\|G(s,v)\|_{0,H}|g_n(s,v)-1|\nu(dv)ds\leq\epsilon,\ \forall n\geq1.
  \end{eqnarray}
On the other hand, by  (\ref{Inequality-001}) for any $L>0$, we have
   \begin{eqnarray}\label{T2-2}
       &   &\int_A\int_{K_\epsilon}\|G(s,v)\|_{0,H}|g_n(s,v)-1|\nu(dv)ds\nonumber\\
  &\leq&
        C(L)\int_A\int_{K_\epsilon}\exp(L\|G(s,v)\|_{0,H})\nu(dv)ds+\frac{1}{L}\int_0^T\int_{\mathbb{X}}l(g_n(s,v))\nu(dv)ds\nonumber\\
      &   &+\int_A\int_{K_\epsilon}\exp(\|G(s,v)\|_{0,H})\nu(dv)ds.
  \end{eqnarray}
We also have
\begin{eqnarray}
\label{T2-3}
&   & \int_A\|\sigma(s,\widetilde{X}^{q_n}(s))f_n(s)\|_Hds\nonumber\\
&\leq&
     \int_A\|\sigma(s,\widetilde{X}^{q_n}(s))\|_{L_2(H)}\|f_n(s)\|_Hds\nonumber\\
&\leq&
     \int_A\sqrt{K(s)}\sqrt{\|\widetilde{X}^{q_n}(s)\|^{2}_{H}+1}\|f_n(s)\|_Hds\nonumber\\
&\leq&
     \sqrt{\sup_{s\in[0,T]}\|\widetilde{X}^{q_n}(s)\|^{2}_{H}+1}\times\Big[(\int_AK(s)ds)(\int_0^T\|f_n(s)\|^2_Hds)\Big]^{1/2}\nonumber\\
&\leq&[\int_AK(s)ds]^{1/2}\times (1+C_N)^{1/2}N^{1/2},
\end{eqnarray}
where $\sup_{s\in[0,T]}\|\widetilde{X}^{q_n}(s)\|^{2}_{H}\leq C_{N}$ and $\int_0^T\|f_n(s)\|^2_H\;ds\leq N$.

Now for any $\eta>0$, first choose $\epsilon>0$ such that $(1+C_N)\epsilon\leq\eta/5$, then select $L>0$ so that
$(1+C_N)N/L\leq\eta/5.$
Finally since $\nu_T([0,T]\times K_\epsilon)=T\nu(K_\epsilon)<\infty$, there exists $\delta>0$ such that for every
measurable subset $A\subset[0,T]$ satisfying $\lambda_T(A)\leq\delta$,
we have
$$
(1+C_N)C(L)\int_A\int_{K_\epsilon}\exp(L\|G(s,v)\|_{0,H})\nu(dv)ds\leq\eta/5,
$$
$$
(1+C_N)\int_A\int_{K_\epsilon}\exp(\|G(s,v)\|_{0,H})\nu(dv)ds\leq\eta/5,
$$
and
$$
[\int_AK(s)ds]^{1/2}\times C_N^{1/2}N^{1/2}\leq\eta/5.
$$
Combining all these inequalities gives (II).

\vspace{3mm}
Let $\widetilde{X}$ be any limit point of the sequence $\{\widetilde{X}^{q_n},n\geq1\}$. Now we will prove that
$\widetilde{X}=\widetilde{X}^q$. Without
loss of generality, we assume the whole sequence $\{\widetilde{X}^{q_n}\}$ converges.

An application of dominated convergence theorem shows that, along the convergent subsequence,
    \begin{eqnarray}\label{pf-main-thm-02-limit-01}
      \int_0^t\langle \widetilde{X}^{q_n}(s),\mathcal{A}^*\phi\rangle ds
            \rightarrow
               \int_0^t\langle \widetilde{X}(s),\mathcal{A}^*\phi\rangle ds,\ \forall\ \phi\in H_{\mathcal{A}^*},
    \end{eqnarray}
as $n\rightarrow\infty$. Furthermore, using the convergence of $\widetilde{X}^{q_n}$ to $\widetilde{X}$, and Lemma
\ref{Lemma-Condition-0,H-1,H}, we have that
    \begin{eqnarray}\label{pf-main-thm-02-limit-02}
     & &\int_0^t\int_{\mathbb{X}}\langle G(s,\widetilde{X}^{q_n}(s),v),\phi\rangle(g_n(s,v)-1)\nu(dv)ds\nonumber\\
      &-&
      \int_0^t\int_{\mathbb{X}}\langle G(s,\widetilde{X}(s),v),\phi\rangle(g_n(s,v)-1)\nu(dv)ds
     \rightarrow
      0,\ \forall\ \phi\in V.
    \end{eqnarray}

Also, since $\widetilde{X}\in C([0,T],H)$, we have for some $\kappa\in(0,\infty)$
$$
 |\langle G(s,\widetilde{X}(s),v),\phi\rangle|\leq\kappa\|\langle G(s,v),\phi\rangle\|_{0,H},\ \forall\phi\in V,\
 \forall(s,v)\in \mathbb{X}_T.
$$

Combining this with Condition \ref{Condition G} and Lemma \ref{lem-thm2-02}, we have, as $n\rightarrow\infty$,
\begin{eqnarray}\label{pf-main-thm-02-limit-03}
    &  &\int_0^t\int_{\mathbb{X}}\langle G(s,\widetilde{X}(s),v),\phi\rangle(g_n(s,v)-1)\nu(dv)ds\nonumber\\
  & \rightarrow&
      \int_0^t\int_{\mathbb{X}}\langle G(s,\widetilde{X}(s),v),\phi\rangle(g(s,v)-1)\nu(dv)ds,\ \forall\phi\in V.
\end{eqnarray}

We also have, for every $\phi\in V$,
\begin{eqnarray}\label{pf-main-thm-02-limit-05}
&    &|\int_0^t\langle \sigma(s,\widetilde{X}^{q_n}(s))f_n(s),\phi\rangle ds-\int_0^t\langle \sigma(s,\widetilde{X}(s))f(s),\phi\rangle ds|\nonumber\\
&\leq&
    \|\phi\|_H
        \Big[
            \int_0^t\|\sigma(s,\widetilde{X}^{q_n}(s))-\sigma(s,\widetilde{X}(s))\|_{L_2(H)}\|f_n(s)\|_Hds\nonumber\\
          &&+
            \int_0^t\|\sigma(s,\widetilde{X}(s))\|_{L_2(H)}\|f_n(s)-f(s)\|_Hds
        \Big]\nonumber\\
&\leq&
    \|\phi\|_H 
        \Big[
            \int_0^t\sqrt{K(s)}\|\widetilde{X}^{q_n}(s)-\widetilde{X}(s)\|_{H}\|f_n(s)\|_Hds\nonumber\\
          &  &\ \ \ \ \ \ \ \ +
            \int_0^t\sqrt{K(s)}\sqrt{\|\widetilde{X}(s)\|^{2}_{H}+1}\|f_n(s)-f(s)\|_Hds
        \Big]\nonumber\\
&\leq&
    \|\phi\|_H
         \Big[
               \sup_{s\in[0,T]}\|\widetilde{X}^{q_n}(s)-\widetilde{X}(s)\|_{H}\times \Big( \int_0^TK(s)ds+\int_0^T\|f_n(s)\|^2_Hds \Big)\nonumber\\
             &  &\ \ \ \ \ \ \ \ +
               \sqrt{\sup_{s\in[0,T]}\|\widetilde{X}(s)\|^{2}_{H}+1}\times(\int_0^TK(s)ds)^{\frac{1}{2}}(\int_0^T\|f_n(s)-f(s)\|^2_Hds )^{1/2}
         \Big]\nonumber\\
&\rightarrow& 0,\ as\ n\rightarrow\infty.\nonumber \\
\end{eqnarray}

Combining (\ref{pf-main-thm-02-limit-01}), (\ref{pf-main-thm-02-limit-02}), (\ref{pf-main-thm-02-limit-03}) and  (\ref{pf-main-thm-02-limit-05}),
we  see that $\widetilde{X}$ must satisfy
\begin{eqnarray}\label{pf-main-thm-02-limit-04}
       \langle \widetilde{X}_t,\phi\rangle
          &=&
            \langle X_0,\phi\rangle
          -
            \int_0^t\langle \mathcal{A}\widetilde{X}_s,\phi\rangle ds
          +
            \int_0^t\langle\sigma(s,\widetilde{X}(s))f(s),\phi\rangle ds\nonumber\\
          & &+
            \int_0^t\int_{\mathbb{X}}\langle G(s,\widetilde{X}_s,v),\phi\rangle(g(s,v)-1)\nu(dv)ds,\ \forall \phi\in
            H_{\mathcal{A}^*}.
\end{eqnarray}


Since $H_{\mathcal{A}^*}$ is dense in $V$, we have $\widetilde{X}=\widetilde{X}^q$, completing the proof.
\end{proof}
\begin{remark}\label{Remark-03}
Fix $N\in\mathbb{N}$. By the proof of (\ref{pf-main-thm-02-UI-02-02}), it is easy to see that, for ever $\eta>0$,
there exists $\delta>0$
such that for any $A\subset[0,T]$ satisfying $\lambda_T(A)<\delta$
\begin{eqnarray}\label{G_0H-inequation}
\sup_{g\in S^N}\int_A\int_{\mathbb{X}}\|G(s,v)\|_{0,H}|g(s,v)-1|\nu(dv)ds\leq \eta.
\end{eqnarray}
\end{remark}

We now verify the second part of Condition $\ref{LDP}$. The next theorem can be proved similarly as in Section 3 in
\cite{Rockner-Zhang}.
\begin{thm}
\label{Thm-solution with random}
Under Condition $\ref{Condition G}$, if $X_{0}\in H$,  there exists a unique $H$-valued progressively
measurable process $(X^{\epsilon}_{t})$ such that, $X^{\epsilon}\in L^{2}(0,T; V)\cap D([0,T]; H)$ for any $T>0$ and
\begin{eqnarray}
\label{Equi-spde}
X^{\epsilon}_{t}=X^{\epsilon}_{0}-\int_{0}^{t}\mathcal{A}X^{\epsilon}_{s}ds+\sqrt{\epsilon}\int_{0}^{t}\sigma(s,X^{\epsilon}_{s})d\beta(s)+\epsilon
\int_{0}^{t}\int_{\mathbb{X}}G(s,X^{\epsilon}_{s},v)\tilde{N}^{\epsilon^{-1}}(dsdv), a.s..
\end{eqnarray}
\end{thm}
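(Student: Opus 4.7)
The plan is to adapt the variational framework for SPDEs in the Gelfand triple $V \subset H \subset V'$ to accommodate both the Brownian and the Poisson components of the noise, following the scheme of \cite{Rockner-Zhang}. I would first construct Galerkin approximations: let $\pi_n$ denote the orthogonal projection of $H$ onto $H_n := \mathrm{span}\{e_1,\dots,e_n\}$, and let $X^{\epsilon,n}$ solve the finite-dimensional equation obtained by projecting (\ref{Equi-spde}), with coefficients $\pi_n \sigma$ and $\pi_n G$. Because the projected coefficients inherit the linear growth and Lipschitz properties of Condition \ref{Condition G}, existence and uniqueness of $X^{\epsilon,n}$ follow from classical SDE theory for finite-dimensional jump diffusions.

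Next I would derive uniform a priori estimates. Applying an It\^o formula for the square norm $\|\cdot\|_H^2$ in the Gelfand triple (extended to c\`adl\`ag processes), one obtains
\begin{eqnarray*}
\|X^{\epsilon,n}(t)\|_H^2 &=& \|\pi_n X_0\|_H^2 - 2\int_0^t \langle \mathcal{A} X^{\epsilon,n}(s),\, X^{\epsilon,n}(s)\rangle\, ds \\
&& +\, \epsilon \int_0^t \|\pi_n \sigma(s, X^{\epsilon,n}(s))\|_{L_2(H)}^2\, ds \\
&& +\, \epsilon \int_0^t \!\!\int_{\mathbb{X}} \|\pi_n G(s, X^{\epsilon,n}(s-), v)\|_H^2\, N^{\epsilon^{-1}}(ds\,dv) + M_n(t),
\end{eqnarray*}
where $M_n$ collects the two martingale terms. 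Combining the coercivity (\ref{condition-lower bounds of A}), the growth bound in Condition \ref{Condition G}, the BDG inequality applied to both the Brownian and the compensated Poisson martingales, and Gronwall's inequality yields
$$
\mathbb{E}\Big[\sup_{t\in[0,T]}\|X^{\epsilon,n}(t)\|_H^2\Big] + \alpha\,\mathbb{E}\int_0^T \|X^{\epsilon,n}(s)\|_V^2\, ds \leq C(T, X_0, \epsilon),
$$
uniformly in $n$.

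Then I would show that $\{X^{\epsilon,n}\}$ is Cauchy in $L^2(\Omega; D([0,T];H) \cap L^2(0,T;V))$ by applying the same It\^o formula to $\|X^{\epsilon,n} - X^{\epsilon,m}\|_H^2$ and invoking the Lipschitz part of Condition \ref{Condition G} together with Gronwall. The limit $X^\epsilon$ is a candidate solution; the Brownian and Poisson stochastic integrals converge via the $L^2$ isometries, while $\mathcal{A}X^{\epsilon,n}$ converges weakly in $L^2(\Omega\times[0,T]; V')$, with the limit identified as $\mathcal{A}X^\epsilon$ by the linearity of $\mathcal{A}$. Pathwise uniqueness follows from an identical It\^o-Gronwall argument applied to the difference of two solutions.

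The main obstacle is the rigorous use of It\^o's formula for $\|\cdot\|_H^2$ along a c\`adl\`ag $V$-valued process whose drift takes values only in $V'$ and whose jumps come from the compensated Poisson integral. A variational-type It\^o formula of this kind is available in the literature and is essentially the one employed in \cite{Rockner-Zhang}; its careful application — in particular, controlling the Poisson quadratic-variation contribution $\epsilon\int_0^t\int_{\mathbb{X}}\|G(s,X^\epsilon_{s-},v)\|_H^2\,\nu(dv)\,ds$ together with a pure-jump remainder using Condition \ref{Condition G}(1) — is the delicate technical point on which the whole argument rests.
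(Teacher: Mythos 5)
Your proposal is correct and is essentially the paper's own route: the paper gives no proof here, simply deferring to Section 3 of \cite{Rockner-Zhang}, and the method there is exactly the Galerkin--It\^o--Gronwall scheme you describe (finite-dimensional projections, coercivity plus Condition \ref{Condition G} plus BDG for both martingales, a Cauchy/limit-identification step, and uniqueness by the same estimate). The only slip is the power of $\epsilon$ on the jump quadratic-variation term, which should be $\epsilon^2\int_0^t\int_{\mathbb{X}}\|\pi_n G\|_H^2\,N^{\epsilon^{-1}}(ds\,dv)$ (compensator $\epsilon\int_0^t\int_{\mathbb{X}}\|\pi_n G\|_H^2\,\nu(dv)\,ds$); this is immaterial since $\epsilon$ is fixed throughout this theorem.
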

\vspace{3mm}

It follows from this theorem that, for every $\epsilon>0$, there exists a measurable map $\mathcal{G}^{\epsilon}$:
$\mathbb{V}\rightarrow D([0,T]; H)$ such that, for any Poisson random measure $n^{\epsilon^{-1}}$ on $[0,T]\times
\mathbb{X}$ with mean measure $\epsilon^{-1} \lambda_{T}\times v$ given on some probability space,
$\mathcal{G}^{\epsilon}(\sqrt{\epsilon}\beta, \epsilon n^{\epsilon^{-1}})$ is the unique solution of
$(\ref{Equi-spde})$ with
$\tilde{N}^{\epsilon^{-1}}$ replaced by $\tilde{n}^{\epsilon^{-1}}$.

Let $\phi_{\epsilon}=(\psi_{\epsilon}, \varphi_{\epsilon})\in \tilde{\mathcal{U}}^N$ and $\phi_{\epsilon}=\frac{1}{\varphi_{\epsilon}}\in \tilde{\mathcal{U}}^{N}$. The following
lemma follows from Lemma 2.3 in \cite{Budhiraja-Dupuis-Maroulas.}.
\begin{lem}
\begin{eqnarray*}
\mathcal{E}^{\epsilon}_{t}(\phi_{\epsilon}):=\exp\big\{&\int_{[0,t]\times \mathbb{X}\times
[0,\epsilon^{-1}]}&\log(\phi_{\epsilon}(s,x))\bar{N}(\;ds\;dx\;dr)\\
&&+\int_{[0,t]\times \mathbb{X}\times
[0,\epsilon^{-1}]}(-\phi_{\epsilon}(s,x)+1)\bar{\nu}_{T}(\;ds\;dx\;dr)\big\}
\end{eqnarray*}
and
$$
\tilde{\mathcal{E}}^{\epsilon}_{t}(\psi_{\epsilon})
:=\exp\{\frac{1}{\sqrt{\epsilon}}\int_{0}^{t}\psi_{\epsilon}(s)\;d\beta(s)-\frac{1}{2\epsilon}\int_{0}^{t}\|\psi_{\epsilon}(s)\|^{2}\;ds\}
$$
are $\{\bar{\mathcal{F}}_{t}\}-$ martingales. Set
$$
\bar{\mathcal{E}}^{\epsilon}_{t}(\psi_{\epsilon},\phi_{\epsilon}):=\tilde{\mathcal{E}}^{\epsilon}_{t}(\psi_{\epsilon})\mathcal{E}^{\epsilon}_{t}(\phi_{\epsilon})
$$
then
$$
\mathbb{Q}^{\epsilon}_{t}(G)=\int_{G}\bar{\mathcal{E}}^{\epsilon}_{t}(\psi_{\epsilon},\phi_{\epsilon})\;d\bar{\mathbb{P}},
\quad
\text{for}\quad G\in \mathcal{B}(\bar{\mathbb{V}})
$$
defines a probability measure on $\bar{\mathbb{V}}$.
\end{lem}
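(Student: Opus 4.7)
The plan is to verify that each of the two factors $\tilde{\mathcal{E}}^{\epsilon}_{t}(\psi_{\epsilon})$ and $\mathcal{E}^{\epsilon}_{t}(\phi_{\epsilon})$ is a true $\{\bar{\mathcal{F}}_t\}$-martingale under $\bar{\mathbb{P}}$ separately, then combine them using the independence of the Brownian motion $\beta$ and the marked Poisson point process $\bar{N}$. Since both factors start at $1$ and are non-negative, the product $\bar{\mathcal{E}}^\epsilon_t$ has expectation one, which immediately yields that $\mathbb{Q}^\epsilon_t$ is a probability measure on $\bar{\mathbb{V}}$.

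For the Brownian factor, observe that the assumption $\phi_\epsilon=(\psi_\epsilon,\varphi_\epsilon)\in\tilde{\mathcal{U}}^N$ forces $\tilde{L}_T(\psi_\epsilon)=\tfrac12\int_0^T\|\psi_\epsilon(s)\|_H^2\,ds\leq N$ pathwise. Consequently,
\[
\bar{\mathbb{E}}\exp\!\Bigl(\tfrac{1}{2\epsilon}\!\int_0^T\!\|\psi_\epsilon(s)\|_H^2\,ds\Bigr)\leq e^{N/\epsilon}<\infty,
\]
so Novikov's condition is satisfied trivially, and the classical Girsanov theorem for the cylindrical Brownian motion $\beta$ yields that $\tilde{\mathcal{E}}^{\epsilon}_{t}(\psi_{\epsilon})$ is a true $\{\bar{\mathcal{F}}_t\}$-martingale.

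For the Poisson factor, the key input is the additional restriction $\varphi_\epsilon\in\bar{\mathcal{A}}_b$ coming from $\tilde{\mathcal{U}}^N$: there exists $n$ with $n^{-1}\leq\varphi_\epsilon\leq n$ on $K_n$ and $\varphi_\epsilon\equiv 1$ on $K_n^c$, which in particular bounds $|\log\varphi_\epsilon|$ uniformly and confines the support of $\varphi_\epsilon-1$ to the finite-measure set $[0,T]\times K_n$. Because $\mathcal{E}^{\epsilon}_{t}(\phi_{\epsilon})$ is exactly the Doléans--Dade exponential of the compensated integral $\int_{[0,t]\times\mathbb{X}\times[0,\epsilon^{-1}]}(\phi_\epsilon(s,x)-1)\,\widetilde{\bar{N}}(ds\,dx\,dr)$, these bounds let one verify the hypotheses of the standard criterion (e.g.\ Br\'emaud or Jacod--Shiryaev) for such exponentials of compensated Poisson random measures to be true martingales on $[0,T]$. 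This is precisely the content of Lemma 2.3 of \cite{Budhiraja-Dupuis-Maroulas.}, which one may invoke directly after noting the boundedness afforded by $\bar{\mathcal{A}}_b$.

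Finally, to pass from the two factors to the product, one uses integration by parts: because $\beta$ is continuous and $\bar{N}$ has no common jumps with $\beta$ (and, more substantively, because the two driving processes are independent under $\bar{\mathbb{P}}$), the cross-variation $[\tilde{\mathcal{E}}^\epsilon,\mathcal{E}^\epsilon]$ vanishes, so
\[
d\bar{\mathcal{E}}^\epsilon_t=\tilde{\mathcal{E}}^\epsilon_{t-}\,d\mathcal{E}^\epsilon_t+\mathcal{E}^\epsilon_{t-}\,d\tilde{\mathcal{E}}^\epsilon_t,\qquad \bar{\mathcal{E}}^\epsilon_0=1,
\]
which exhibits $\bar{\mathcal{E}}^\epsilon_t$ as a local martingale; the uniform integrability needed to upgrade it to a true martingale is provided by the same Novikov bound on the Brownian side and the boundedness of $\varphi_\epsilon$ on the Poisson side. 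Hence $\bar{\mathbb{E}}[\bar{\mathcal{E}}^\epsilon_t]=1$ and $\mathbb{Q}^\epsilon_t$ is a probability measure. The main obstacle in principle is the Poisson martingale step, since the martingale property of jump-type stochastic exponentials can fail without integrability control; however, the very definition of $\bar{\mathcal{A}}_b$ is tailored to neutralize this obstacle, so here the difficulty reduces to citing (or reproving) Lemma 2.3 of \cite{Budhiraja-Dupuis-Maroulas.}.
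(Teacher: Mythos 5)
Your proof is correct and follows essentially the same route as the paper: the paper disposes of this lemma in a single line by citing Lemma 2.3 of \cite{Budhiraja-Dupuis-Maroulas.}, which is exactly the reference you fall back on for the hard (Poisson exponential) step. The extra details you supply --- Novikov's condition via the pathwise bound $\tilde{L}_T(\psi_\epsilon)\le N$, the role of $\bar{\mathcal{A}}_b$ in controlling the jump part, and the product-of-independent-martingales argument --- are all consistent with what that citation packages up.
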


By the fact that $\epsilon N^{\epsilon^{-1}\varphi_{\epsilon}}$ under $\mathbb{Q}^{\epsilon}_{T}$ has the same law as
that of $\epsilon N^{\epsilon^{-1}}$ under $\bar{\mathbb{P}}$, we know that
$\tilde{X}^{\epsilon}=\mathcal{G}^{\epsilon}(\sqrt{\epsilon}\beta+\int_{0}^{\cdot}\psi_{\epsilon}(s)\;ds,\epsilon
N^{\epsilon^{-1}\varphi_{\epsilon}})$ is the unique solution of
the following controlled stochastic differential equation:
\begin{eqnarray}
\tilde{X}^{\epsilon}_{t}&=&X_{0}-\int_{0}^{t}\mathcal{A}\tilde{X}^{\epsilon}_{s}ds
+\int_{0}^{t}\sigma(s,\tilde{X}^{\epsilon}_{s})\varphi_{\epsilon}(s)ds
+\sqrt{\epsilon}\int_{0}^{t}\sigma(s,X^{\epsilon}_{s})d\beta(s)\nonumber\\
&&+\int_{0}^{t}\int_{\mathbb{X}}G(s,\tilde{X}^{\epsilon}_{s-},v)\left(\epsilon N^{\epsilon^{-1}\varphi_{\epsilon}}(ds
dv)-\nu(dv)ds\right)\nonumber\\
&=&X_{0}-\int_{0}^{t}\mathcal{A}\tilde{X}^{\epsilon}_{s}ds+\int_{0}^{t}\sigma(s,\tilde{X}^{\epsilon}_{s})\varphi_{\epsilon}(s)ds
+\sqrt{\epsilon}\int_{0}^{t}\sigma(s,X^{\epsilon}_{s})d\beta(s)\nonumber\\
&&+\int_{0}^{t}\int_{\mathbb{X}}G(s,\tilde{X}^{\epsilon}_{s},v)
(\varphi_{\epsilon}(s,v)-1)\nu(dv)ds\nonumber\\
&&+\int_{0}^{t}\int_{\mathbb{X}}\epsilon G(s,\tilde{X}^{\epsilon}_{s-},v)\left(
N^{\epsilon^{-1}\varphi_{\epsilon}}(ds
dv)-\epsilon^{-1}\varphi_{\epsilon}(s,v)\nu(dv)ds\right)
\end{eqnarray}

The following estimates will be used later.
\begin{lem}\label{Lemma-estimation-SPDE}
There exists $\epsilon_{0}>0$, such that,
\begin{eqnarray}\label{estimation-01}
\sup_{0<\epsilon<\epsilon_{0}}\mathbb{E}\sup_{0\leq t\leq T}\|\tilde{X}^{\epsilon}_{t}\|_{H}^{2}<\infty.
\end{eqnarray}
\end{lem}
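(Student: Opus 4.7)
The plan is to apply It\^o's formula to $t\mapsto\|\tilde X^\epsilon_t\|_H^2$, use the coercivity assumption (\ref{condition-lower bounds of A}) on $\mathcal{A}$ to absorb a dissipative term, bound each of the five remaining contributions---two control drifts, two stochastic integrals, and the quadratic-variation correction coming from the jump integral---and close with Gronwall's inequality. To make the application of It\^o and BDG rigorous before we have the \emph{a priori} bound, I would first localize with the stopping times $\tau_n=\inf\{t\in[0,T]:\|\tilde X^\epsilon_t\|_H>n\}\wedge T$ and pass $n\to\infty$ at the end by monotone convergence.

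After It\^o and (\ref{condition-lower bounds of A}), the two control drifts are handled by Cauchy--Schwarz together with Condition \ref{Condition G}(1) and the factorisation $\|G(s,\tilde X^\epsilon_s,v)\|_H\leq\|G(s,v)\|_{0,H}(1+\|\tilde X^\epsilon_s\|_H)$: the $\psi_\epsilon$-drift is dominated by the integrable weight $K(s)+\|\psi_\epsilon(s)\|_H^2$ (using $\tilde L_T(\psi_\epsilon)\leq N$), while the $(\varphi_\epsilon-1)$-drift is bounded by $C_N\bigl(1+\sup_{s\leq t}\|\tilde X^\epsilon_s\|_H^2\bigr)$ via (\ref{Inq-G-0-2}). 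For the Brownian stochastic integral, BDG followed by Young's inequality produces
$$
\mathbb{E}\sup_{s\leq t\wedge\tau_n}\Big|2\sqrt\epsilon\int_0^s\langle \tilde X^\epsilon_r,\sigma(r,\tilde X^\epsilon_r)d\beta(r)\rangle\Big|\leq \tfrac14\mathbb{E}\sup_{s\leq t\wedge\tau_n}\|\tilde X^\epsilon_s\|_H^2+C\epsilon\int_0^tK(s)\bigl(1+\mathbb{E}\sup_{r\leq s\wedge\tau_n}\|\tilde X^\epsilon_r\|_H^2\bigr)ds.
$$
For the compensated Poisson martingale $M^\epsilon$, the predictable quadratic variation equals $\epsilon\int_0^t\!\int_\mathbb{X}\|G(s,\tilde X^\epsilon_s,v)\|_H^2\varphi_\epsilon(s,v)\nu(dv)\,ds$, which by the same factorisation and (\ref{Inq-G-0-1}) is at most $C_N\epsilon\bigl(1+\sup_{s\leq t}\|\tilde X^\epsilon_s\|_H^2\bigr)$; Kunita's first inequality (the BDG inequality for purely discontinuous Hilbert-space-valued martingales) then converts this into the same bound on $\mathbb{E}\sup_{s\leq t\wedge\tau_n}|M^\epsilon_s|$, and after a further Young estimate a $\tfrac14\mathbb{E}\sup\|\tilde X^\epsilon\|_H^2$ term is absorbable for small $\epsilon$. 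The jump quadratic-variation correction coming from It\^o's formula is of the same order and handled identically.

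Choosing $\epsilon_0$ so small that the coefficients absorbed from the two martingale estimates sum to at most $\tfrac12$ for all $\epsilon\in(0,\epsilon_0)$, I move them to the left-hand side and apply Gronwall's lemma with the integrable weight $\lambda_0+K(s)+\|\psi_\epsilon(s)\|_H^2+\int_\mathbb{X}\|G(s,v)\|_{0,H}|\varphi_\epsilon(s,v)-1|\nu(dv)$, whose $L^1([0,T])$ norm is uniformly bounded in $\epsilon$ by Lemma \ref{Lemma-Condition-0,H-1,H} and the definitions of $\tilde{S}^N$ and $S^N$. Letting $n\to\infty$ yields (\ref{estimation-01}). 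I expect the principal obstacle to be the jump martingale: because $\epsilon G$ is not a uniformly bounded jump size and the compensator $\epsilon^{-1}\varphi_\epsilon\nu$ explodes as $\epsilon\to 0$, it is essential that the $\epsilon^2\cdot\epsilon^{-1}=\epsilon$ scaling of the quadratic variation win, and one must invoke the Kunita/BDG inequality for Hilbert-space-valued purely discontinuous martingales (rather than Doob's inequality alone) to convert the quadratic-variation bound into a bound on the supremum without any boundedness assumption on $G$.
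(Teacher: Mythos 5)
Your proposal follows essentially the same route as the paper's proof: It\^o's formula for $\|\tilde X^\epsilon_t\|_H^2$, coercivity to discard the $V$-norm term, the factorisation through $\|G(s,v)\|_{0,H}$ together with Lemma \ref{Lemma-Condition-0,H-1,H}, BDG plus Young for both the Brownian and the compensated Poisson martingales (where the $\epsilon^2\cdot\epsilon^{-1}=\epsilon$ scaling is indeed the decisive point), absorption of the resulting $\mathbb{E}\sup_t\|\tilde X^\epsilon_t\|_H^2$ contributions for $\epsilon<\epsilon_0$, and Gronwall with the weight you describe. The one point to be careful about is the order of operations at the end: since the Gronwall weight contains the random quantities $\|\psi_\epsilon(s)\|_H^2$ and $\int_{\mathbb{X}}\|G(s,v)\|_{0,H}|\varphi_\epsilon(s,v)-1|\nu(dv)$, Gronwall must be applied pathwise first (its exponential factor is then bounded by a deterministic constant because $u_\epsilon\in\widetilde{\mathcal{U}}^N$ and by Lemma \ref{Lemma-Condition-0,H-1,H}) and expectations taken afterwards, which is exactly how the paper proceeds.
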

\begin{proof}
By Ito's formula,
\begin{eqnarray}
\label{Ito formula}
&&\|\tilde{X}^{\epsilon}_{t}\|_{H}^{2}\nonumber\\
&=&\|X_{0}\|_{H}^{2}
-2\int_{0}^{t}\langle \tilde{X}^{\epsilon}_{s}, \mathcal{A}^*\tilde{X}^{\epsilon}_{t}\rangle ds
+2\int_{0}^{t}\langle \tilde{X}^{\epsilon}_{s}, \sigma(s,\tilde{X}^{\epsilon}_{t})\psi_{\epsilon}(s)\rangle
\;ds+2\sqrt{\epsilon}\int_{0}^{t}\sigma(s,\tilde{X}^{\epsilon}_{s})\tilde{X}^{\epsilon}_{s}\;d\beta_{s}\nonumber\\
&+&2\int_{0}^{t}\int_{\mathbb{X}}<\tilde{X}^{\epsilon}_{s},G(s,\tilde{X}^{\epsilon}_{s},v)>(\varphi_{\epsilon}(s,v)-1)\nu(dv)ds\nonumber\\
&+&\int_{0}^{t}\int_{\mathbb{X}}\left(2<\epsilon
G(s,\tilde{X}^{\epsilon}_{s-},v),\tilde{X}^{\epsilon}_{s}>+\|\epsilon
G(s,\tilde{X}^{\epsilon}_{s-},v)\|_{H}^{2}\right)\left( N^{\epsilon^{-1}\varphi_{\epsilon}}(ds
dv)-\epsilon^{-1}\varphi_{\epsilon}(s,v)\nu(dv)ds\right)\nonumber\\
&+&\epsilon\int_{0}^{t}\int_{\mathbb{X}}\|G(s,\tilde{X}^{\epsilon}_{s},v)\|_{H}^{2}\varphi_{\epsilon}(s,v)\nu(dv)ds
+\epsilon\int_{0}^{t}\|\sigma(s,\tilde{X}^{\epsilon}_{s})\|_{L_{2}(H)}^{2}\;ds.
\end{eqnarray}

The third term on right hand side of last equation is estimated as follows.
\begin{eqnarray}
\label{estimates-third term}
2\int_{0}^{t}\langle \tilde{X}^{\epsilon}_{s}, \sigma(s,\tilde{X}^{\epsilon}_{t})\psi_{\epsilon}(s)\rangle \;ds
&\leq& 2\int_{0}^{t}\| \tilde{X}^{\epsilon}_{s}\|_{H}
\|\sigma(s,\tilde{X}^{\epsilon}_{t})\|_{L_{2}(H)}\|\psi_{\epsilon}(s)\|_{H} \;ds\nonumber\\
&\leq& \int_{0}^{t}\| \tilde{X}^{\epsilon}_{s}\|^{2}_{H}\|\psi_{\epsilon}(s)\|_{H}^{2}\;ds
+\int_{0}^{t} K(s)(1+\|\tilde{X}^{\epsilon}_{s}\|_{H}^{2})\;ds\nonumber\\
&\leq& \int_{0}^{t}\|
\tilde{X}^{\epsilon}_{s}\|^{2}_{H}\left(\|\psi_{\epsilon}(s)\|_{H}^{2}+K(s)\right)\;ds+\int_{0}^{t}K(s)\;ds.
\end{eqnarray}

Denote the forth term on the right hand side of equation $(\ref{Ito formula})$ by $W_{t}$. Then we have
\begin{eqnarray}
\label{estimates-martingale 1}
E[\sup_{0\leq s \leq T}|W_{s}|]&\leq& 4E\sqrt{[W]_{T}}\nonumber\\
&\leq& 8\sqrt{\epsilon}
E[\sqrt{\int_{0}^{T}\|\sigma(s,\tilde{X}^{\epsilon}_{s})\|_{L_{2}(H)}^{2}\|\tilde{X}^{\epsilon}_{s}\|_{H}^{2}\;ds}]\nonumber\\
&\leq& 8\sqrt{\epsilon} E[\sup_{0\leq s \leq T}\|\tilde{X}^{\epsilon}_{s}\|_{H}\cdot
\sqrt{\int_{0}^{T}\|\sigma(s,\tilde{X}^{\epsilon}_{s})\|_{L_{2}(H)}^{2}\;ds}]\nonumber\\
&\leq& 4\sqrt{\epsilon} E[\sup_{0\leq s \leq T}\|\tilde{X}^{\epsilon}_{s}\|_{H}^{2}]+4\sqrt{\epsilon}
E[\int_{0}^{T}\|\sigma(s,\tilde{X}^{\epsilon}_{s})\|_{L_{2}(H)}^{2}\;ds]\nonumber\\
&\leq&  4\sqrt{\epsilon} E[\sup_{0\leq s \leq T}\|\tilde{X}^{\epsilon}_{s}\|_{H}^{2}]+4\sqrt{\epsilon}
E[\int_{0}^{T}K(s)(1+\|\tilde{X}^{\epsilon}_{s}\|_{H}^{2})\;ds]\nonumber\\
&\leq&  4\sqrt{\epsilon}(1+\int_{0}^{T}K(s)\;ds)E[\sup_{0\leq s \leq
T}\|\tilde{X}^{\epsilon}_{s}\|_{H}^{2}]+4\sqrt{\epsilon} E[\int_{0}^{T}K(s)\;ds].
\end{eqnarray}

The fifth term on the right hand side of equation $(\ref{Ito formula})$ has the following bound.
\begin{eqnarray}
\label{estimates-fifth term}
&&2\int_{0}^{t}\int_{\mathbb{X}}<\tilde{X}^{\epsilon}_{s},G(s,\tilde{X}^{\epsilon}_{s},v)>(\varphi_{\epsilon}(s,v)-1)\nu(dv)ds\nonumber\\
&\leq&
2\int_{0}^{t}\int_{\mathbb{X}}\|\tilde{X}^{\epsilon}_{s}\|_{H}\|G(s,\tilde{X}^{\epsilon}_{s},v)\|_{H}|\varphi_{\epsilon}(s,v)-1|\nu(dv)ds\nonumber\\
&\leq&
2\int_{0}^{t}\int_{\mathbb{X}}\|G(s,v)\|_{0,H}(1+\|\tilde{X}^{\epsilon}_{s}\|_{H})\|\tilde{X}^{\epsilon}_{s}\|_{H}|\varphi_{\epsilon}(s,v)-1|\nu(dv)ds\nonumber\\
&\leq&
2\int_{0}^{t}\int_{\mathbb{X}}\|G(s,v)\|_{0,H}(1+2\|\tilde{X}^{\epsilon}_{s}\|^{2}_{H})|\varphi_{\epsilon}(s,v)-1|\nu(dv)ds\nonumber\\
&\leq&
2C^{N}_{0,1}+4\int_{0}^{t}\int_{\mathbb{X}}\|G(s,v)\|_{0,H}\|\tilde{X}^{\epsilon}_{s}\|^{2}_{H}|\varphi_{\epsilon}(s,v)-1|\nu(dv)ds,
\end{eqnarray}
where  $C^{N}_{0,1}$ is the constant from Lemma $\ref{Lemma-Condition-0,H-1,H}$.

The seventh term in $(\ref{Ito formula})$ is bounded by,
\begin{eqnarray}
\label{estimates-seventh term}
&&\epsilon\int_{0}^{t}\int_{\mathbb{X}}\|G(s,\tilde{X}^{\epsilon}_{s},v)\|_{H}^{2}\varphi_{\epsilon}(s,v)\nu(dv)ds\nonumber\\
&&\leq 2\epsilon \int_{0}^{t}\int_{\mathbb{X}}\|
G(s,v)\|_{0,H}^{2}(1+\|\tilde{X}^{\epsilon}_{s}\|_{H}^{2})\varphi_{\epsilon}(s,v)\nu(dv)ds\nonumber\\
&&\leq 2\epsilon C_{0,2}^{N}+ 2\epsilon \int_{0}^{t}\int_{\mathbb{X}}\|
G(s,v)\|_{0,H}^{2}\|\tilde{X}^{\epsilon}_{s}\|_{H}^{2}\varphi_{\epsilon}(s,v)\nu(dv)ds,
\end{eqnarray}
where constant $C^{N}_{0,2}$ was defined in Lemma $\ref{Lemma-Condition-0,H-1,H}$.

The last term in $(\ref{Ito formula})$ is bounded by,
\begin{eqnarray}
\label{estimates-last term}
\epsilon\int_{0}^{t}\|\sigma(s,\tilde{X}^{\epsilon}_{s})\|_{L_{2}(H)}^{2}\;ds
&\leq& \epsilon\int_{0}^{t}K(s)(1+\|\tilde{X}^{\epsilon}_{s}\|^{2}_{H})\;ds\nonumber\\
&\leq&  \epsilon\int_{0}^{t}\|\tilde{X}^{\epsilon}_{s}\|^{2}_{H}K(s)\;ds+\epsilon\int_{0}^{t}K(s)\;ds.
\end{eqnarray}

Set
$$
M_{t}=\int_{0}^{t}\int_{\mathbb{X}}<2\epsilon G(s,\tilde{X}^{\epsilon}_{s-},v),\tilde{X}^{\epsilon}_{s}>\left(
N^{\epsilon^{-1}\varphi_{\epsilon}}(ds dv)-\epsilon^{-1}\varphi_{\epsilon}(s,v)\nu(dv)ds\right)
$$
and
$$
K_{t}=\int_{0}^{t}\int_{\mathbb{X}}\|\epsilon G(s,\tilde{X}^{\epsilon}_{s-},v)\|_{H}^{2}\left(
N^{\epsilon^{-1}\varphi_{\epsilon}}(ds dv)-\epsilon^{-1}\varphi_{\epsilon}(s,v)\nu(dv)ds\right).
$$

We have
\begin{eqnarray}
\label{estimates-martingale 2}
\mathbb{E}[\sup_{0\leq t \leq T}|K_{t}|]&\leq&\mathbb{E}[\sup_{0\leq t \leq T}\int_{0}^{t}\int_{\mathbb{X}}\|\epsilon
G(s,\tilde{X}^{\epsilon}_{s-},v)\|_{H}^{2}N^{\epsilon^{-1}\varphi_{\epsilon}}(ds dv)]
\nonumber\\
&&+\epsilon \mathbb{E}[\int_{0}^{T}\int_{\mathbb{X}}\|
G(s,\tilde{X}^{\epsilon}_{s},v)\|_{H}^{2}\varphi_{\epsilon}(s,v)\nu(dv)ds]\nonumber\\
&&\leq 2\epsilon \mathbb{E}[\int_{0}^{T}\int_{\mathbb{X}}\|
G(s,\tilde{X}^{\epsilon}_{s},v)\|_{H}^{2}\varphi_{\epsilon}(s,v)\nu(dv)ds]\nonumber\\
&&\leq 4\epsilon \mathbb{E}[\int_{0}^{T}\int_{\mathbb{X}}\|
G(s,v)\|_{0,H}^{2}(1+\|\tilde{X}^{\epsilon}_{s}\|_{H}^{2})\varphi_{\epsilon}(s,v)\nu(dv)ds]\nonumber\\
&&\leq 4\epsilon C_{0,2}^{N}(1+\mathbb{E}[\sup_{0\leq t \leq T}\|\tilde{X}^{\epsilon}_{s}\|_{H}^{2}]).
\end{eqnarray}

For the martingale $M_{t}$, we have
\begin{eqnarray}
\label{estimates-martingale 3}
\mathbb{E}[\sup_{0\leq t \leq T}|M_{t}|]&\leq& 4\mathbb{E}\sqrt{[M]_{T}}\nonumber\\
&&\leq 4\mathbb{E}\left\{\int_{0}^{T}\int_{\mathbb{X}}|<2\epsilon
G(s,\tilde{X}^{\epsilon}_{s-},v),\tilde{X}^{\epsilon}_{s}>|^{2} N^{\epsilon^{-1}\varphi_{\epsilon}}(ds
dv)\right\}^{\frac{1}{2}}\nonumber\\
&&\leq 8\epsilon \mathbb{E}\left\{\int_{0}^{T}\int_{\mathbb{X}}
\|G(s,\tilde{X}^{\epsilon}_{s-},v)\|_{H}^{2}\|\tilde{X}^{\epsilon}_{s}\|_{H}^{2}
N^{\epsilon^{-1}\varphi_{\epsilon}}(ds dv)\right\}^{\frac{1}{2}}\nonumber\\
&&\leq 8\epsilon \mathbb{E}(\sup_{0\leq t \leq
T}\|\tilde{X}^{\epsilon}_{s}\|_{H})\left\{\int_{0}^{T}\int_{\mathbb{X}}
\|G(s,\tilde{X}^{\epsilon}_{s-},v)\|_{H}^{2} N^{\epsilon^{-1}\varphi_{\epsilon}}(ds
dv)\right\}^{\frac{1}{2}}\nonumber\\
&&\leq \frac{1}{C}\mathbb{E}[\sup_{0\leq t \leq
T}\|\tilde{X}^{\epsilon}_{s}\|^{2}_{H}]+16\epsilon^{2}C\mathbb{E}[\int_{0}^{T}\int_{\mathbb{X}}
\|G(s,\tilde{X}^{\epsilon}_{s-},v)\|_{H}^{2} N^{\epsilon^{-1}\varphi_{\epsilon}}(ds dv)]\nonumber\\
&&=\frac{1}{C}\mathbb{E}[\sup_{0\leq t \leq T}\|\tilde{X}^{\epsilon}_{s}\|^{2}_{H}]+16\epsilon
C\mathbb{E}[\int_{0}^{T}\int_{\mathbb{X}} \|G(s,\tilde{X}^{\epsilon}_{s-},v)\|_{H}^{2}
\varphi_{\epsilon}(s,v)\nu(dv)ds]\nonumber\\
&&\leq \frac{1}{C}\mathbb{E}[\sup_{0\leq t \leq T}\|\tilde{X}^{\epsilon}_{s}\|^{2}_{H}]+32\epsilon C
C_{0,2}^{N}(1+\mathbb{E}[\sup_{0\leq t \leq T}\|\tilde{X}^{\epsilon}_{s}\|_{H}^{2}])\nonumber\\
&&\leq 32\epsilon C C_{0,2}^{N}+(\frac{1}{C}+32\epsilon C C_{0,2}^{N})\mathbb{E}[\sup_{0\leq t \leq
T}\|\tilde{X}^{\epsilon}_{s}\|_{H}^{2}],
\end{eqnarray}
where $C$ is any positive number.

Set $K=\int_{0}^{T}K(s)ds$. Combining the estimates $(\ref{estimates-third term})$, $(\ref{estimates-fifth term})$,
$(\ref{estimates-seventh term})$ and $(\ref{estimates-last term})$, we have
\begin{eqnarray}
&&\|\tilde{X}^{\epsilon}_{t}\|_{H}^{2}\nonumber\\
&\leq& (\|\tilde{X}_{0}\|_{H}^{2}+(1+\epsilon)K+\sup_{0\leq t \leq T}|W_{t}|+
2C^{N}_{0,1}+ 2\epsilon C_{0,2}^{N}+\sup_{0\leq t \leq T}|K_{t}|+\sup_{0\leq t \leq T}|M_{t}|)\nonumber\\
&&+\int_{0}^{t}\|\tilde{X}^{\epsilon}_{s}\|^{2}_{H} \cdot(\|\psi_{\epsilon}(s)\|_{H}^{2}+(1+\epsilon)K(s)
+\int_{\mathbb{X}}(4\|G(s,v)\|_{0,H}|\varphi_{\epsilon}(s,v)-1| \nonumber\\
&&\quad +2\epsilon \| G(s,v)\|_{0,H}^{2}\varphi_{\epsilon}(s,v))\nu(dv))\;ds.\nonumber
\end{eqnarray}

By Grownwall's inequality and Lemma $\ref{Lemma-Condition-0,H-1,H}$, we get
\begin{eqnarray}
\|\tilde{X}^{\epsilon}_{t}\|_{H}^{2}
&\leq& (\|\tilde{X}_{0}\|_{H}^{2}+(1+\epsilon)K+\sup_{0\leq t \leq T}|W_{t}|+
2C^{N}_{0,1}+ 2\epsilon C_{0,2}^{N}+\sup_{0\leq t \leq T}|K_{t}|+\sup_{0\leq t \leq T}|M_{t}|)\nonumber\\
&&\cdot e^{(N+(1+\epsilon)K+4C^{N}_{0,1}+2\epsilon C_{0,2}^{N})}.\nonumber
\end{eqnarray}

Set $C_{0}=e^{(N+(1+\epsilon)K+4C^{N}_{0,1}+2\epsilon C_{0,2}^{N})}$. By $(\ref{estimates-martingale 1})$,
$(\ref{estimates-martingale 2})$ and
$(\ref{estimates-martingale 3})$, we have
\begin{eqnarray}
\mathbb{E}[\sup_{0\leq t \leq T}\|\tilde{X}^{\epsilon}_{t}\|_{H}^{2}]&\leq&
C_{0}\left(\mathbb{E}\|\tilde{X}_{0}\|_{H}^{2}+2C^{N}_{0,1}+ 2\epsilon C_{0,2}^{N}+4\epsilon C_{0,2}^{N}+32\epsilon C
C_{0,2}^{N}+(1+\epsilon+4\sqrt{\epsilon})K\right)\nonumber\\
&&+C_{0}\left(4\epsilon C_{0,2}^{N}+\frac{1}{C}+32\epsilon C
C_{0,2}^{N}+4\sqrt{\epsilon}(1+K)\right)\mathbb{E}[\sup_{0\leq t \leq
T}\|\tilde{X}^{\epsilon}_{s}\|_{H}^{2}].\nonumber
\end{eqnarray}

Since constant $C$ can be arbitrarily large, we can select $C$ and  $\epsilon_{0}$ small enough,
such that
$$
C_{0}\left(4\epsilon_{0} C_{0,2}^{N}+\frac{1}{C}+32\epsilon_{0} C
C_{0,2}^{N}+4\sqrt{\epsilon_{0}}(1+K)\right)<\frac{1}{2}
$$

Therefore, we have
\begin{eqnarray*}
&&\sup_{0<\epsilon<\epsilon_{0}}\mathbb{E}\sup_{0\leq t\leq T}\|\tilde{X}^{\epsilon}_{t}\|_{H}^{2}\nonumber\\
&\leq&
2C_{0}\left(\mathbb{E}\|\tilde{X}_{0}\|_{H}^{2}+2C^{N}_{0,1}+ 2\epsilon_{0} C_{0,2}^{N}+4\epsilon_{0}
C_{0,2}^{N}+32\epsilon_{0} C C_{0,2}^{N}+(1+\epsilon_{0}+4\sqrt{\epsilon_{0}})K\right)\nonumber\\
&<&\infty.
\end{eqnarray*}
\end{proof}

The following proof of estimates (\ref{estimation-02}) and (\ref{estimation-03}) was inspired by the method in \cite{Rockner-Zhang}.
\begin{lem}
There exists $\epsilon_{0}>0$, such that, for any $t_0\in[0,T]$, $0<\epsilon<\epsilon_0$,
\begin{eqnarray}\label{estimation-02}
     &  &(\frac{1}{2}-4\epsilon^{1/2})\mathbb{E}[\sup_{0\leq t\leq t_0}\sum_{i=k}^\infty|\langle
     \tilde{X}^{\epsilon}_{t},e_i\rangle|^2]\\
&\leq&
        \sum_{i=k}^\infty\langle \tilde{X}^{\epsilon}_{0},e_i\rangle^2
     +
            (8N+129\epsilon)(\int_{0}^{t_{0}}K(s)ds)\mathbb{E}[(1+\sup_{0\leq t\leq
            t_0}\|\tilde{X}^{\epsilon}_{t}\|_H)^2]\nonumber\\
    && +4\mathbb{E}[(1+\sup_{0\leq t\leq t_0}\|\tilde{X}^{\epsilon}_{t}\|_H)^2]
        \cdot
            \sup_{g\in
            S^N}\left(\int_0^{t_0}\int_{\mathbb{X}}\|G(s,v)\|_{0,H}|g(s,v)-1|\nu(dv)ds\right)^{2}\nonumber\\
     &  &+
             (4\epsilon^{1/2}+2\epsilon)\mathbb{E}[(1+\sup_{0\leq t\leq t_0}\|\tilde{X}^{\epsilon}_{t}\|_H)^2]
        \cdot
             \sup_{g\in S^N}\int_0^{t_0}\int_{\mathbb{X}}\|G(s,v)\|^2_{0,H}g(s,v)\nu(dv)ds.\nonumber
\end{eqnarray}
\end{lem}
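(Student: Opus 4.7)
The plan is to apply It\^o's formula to $S_t:=\sum_{i=k}^\infty\langle\tilde X^\epsilon_t,e_i\rangle^2=\|P_k\tilde X^\epsilon_t\|_H^2$, where $P_k$ denotes orthogonal projection onto $\overline{\mathrm{span}}\{e_i:i\geq k\}$, using the controlled SDE satisfied by $\tilde X^\epsilon$ introduced before Lemma \ref{Lemma-estimation-SPDE}. Because $\mathcal{A}^{*}e_i=\zeta_i e_i$ with $\zeta_i\geq 0$, the drift contribution of $-\mathcal{A}$ to $dS_t$ is $-2\sum_{i\geq k}\zeta_i\langle\tilde X^\epsilon_s,e_i\rangle^2\leq 0$, which I would simply drop to obtain an upper bound. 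The remainder of the argument follows the blueprint of Lemma \ref{Lemma-estimation-SPDE}, but with every expression passed through $P_k$ and with Young's inequality scalings chosen to produce the exact constants appearing in the claim.

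After taking $\sup_{t\leq t_0}$ and then $\mathbb{E}$, I would treat the surviving terms in two groups. The deterministic drift $2\int_0^t\langle P_k\tilde X^\epsilon_s,\sigma(s,\tilde X^\epsilon_s)\psi_\epsilon(s)\rangle ds$ is bounded by Cauchy--Schwarz together with Condition \ref{Condition G}(1) and $\int_0^T\|\psi_\epsilon\|_H^2 ds\leq 2N$; this accounts for the $8N$ factor inside $(8N+129\epsilon)\int_0^{t_0}K(s)ds$. The piece $2\int_0^t\int_{\mathbb{X}}\langle P_k\tilde X^\epsilon_s,G\rangle(\varphi_\epsilon-1)\nu(dv)ds$ I would majorize by $2\int_0^{t_0}S_s^{1/2}(1+\|\tilde X^\epsilon_s\|_H)\|G(s,v)\|_{0,H}|\varphi_\epsilon-1|\nu(dv)ds$; Young's inequality $2ab\leq\tfrac14 a^2+4b^2$ with $a=\sup_t S_t^{1/2}$ then absorbs $\tfrac14\mathbb{E}[\sup_t S_t]$ onto the LHS while producing the RHS term $4\mathbb{E}[(1+\sup_t\|\tilde X^\epsilon_t\|_H)^2]\sup_{g\in S^N}(\int\int\|G\|_{0,H}|g-1|\nu)^2$. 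The It\^o corrections $\epsilon\|P_k\sigma\|_{L_2(H)}^2$ and $\epsilon\int\|P_kG\|_H^2\varphi_\epsilon\nu(dv)$ contribute the remaining $\epsilon$-parts of the $(8N+129\epsilon)\int K$ and $(4\sqrt\epsilon+2\epsilon)\sup_g\int\int\|G\|_{0,H}^2 g\,\nu$ coefficients via Condition \ref{Condition G}(1) and Lemma \ref{Lemma-Condition-0,H-1,H}.

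For the martingale terms I would invoke Burkholder--Davis--Gundy. The Brownian integral $2\sqrt\epsilon\int_0^t\langle P_k\tilde X^\epsilon_s,\sigma d\beta\rangle$ satisfies
\[
\mathbb{E}\sup_{t\leq t_0}|\cdot|\leq 8\sqrt\epsilon\,\mathbb{E}\Bigl[\sup_{t\leq t_0}S_t^{1/2}\Bigl(\int_0^{t_0}K(s)(1+\|\tilde X^\epsilon_s\|_H^2)ds\Bigr)^{1/2}\Bigr],
\]
and the scaling $2ab\leq 4\sqrt\epsilon\,a^2+(4\sqrt\epsilon)^{-1}b^2$ yields the $4\sqrt\epsilon\,\mathbb{E}[\sup_t S_t]$ absorption together with a term of order $\sqrt\epsilon(\int K)\mathbb{E}[(1+\sup_t\|\tilde X^\epsilon_t\|)^2]$ on the right. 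The compensated Poisson martingale I would split into the linear part $2\epsilon\int\int\langle G,P_k\tilde X^\epsilon_{s-}\rangle dN^c$ and the quadratic part $\epsilon^2\int\int\|P_kG\|_H^2 dN^c$; BDG on the linear part, treated as in Lemma \ref{Lemma-estimation-SPDE}, supplies the remaining $\tfrac14\mathbb{E}[\sup_t S_t]$ absorption, while the quadratic part feeds into the $(4\sqrt\epsilon+2\epsilon)$ coefficient on the right. Summing the absorptions one obtains $1-(\tfrac14+\tfrac14+4\sqrt\epsilon)=\tfrac12-4\sqrt\epsilon$ on the LHS, as claimed, and Lemma \ref{Lemma-estimation-SPDE} guarantees that $\mathbb{E}[(1+\sup_t\|\tilde X^\epsilon_t\|_H)^2]$ is finite.

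The main obstacle is the numerical bookkeeping: producing exactly $\tfrac12-4\sqrt\epsilon$ on the LHS together with the prescribed combinations $(8N+129\epsilon)\int K$, the bare factor $4$, and $(4\sqrt\epsilon+2\epsilon)$ on the RHS forces a specific choice of Young's inequality scalings. The most delicate technical step is the BDG estimate for the Poisson martingale, where one must convert between integrals against $N^{\epsilon^{-1}\varphi_\epsilon}$ and its compensator $\epsilon^{-1}\varphi_\epsilon\nu(dv)ds$ while keeping track of the $\sqrt\epsilon$ factors needed for the absorption.
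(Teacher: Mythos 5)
Your overall strategy is exactly the paper's: apply It\^o's formula to $\sum_{i\ge k}|\langle\tilde X^\epsilon_t,e_i\rangle|^2$, discard the $-2\sum_{i\ge k}\zeta_i|\langle\tilde X^\epsilon_s,e_i\rangle|^2$ drift since $\zeta_i\ge 0$, and then close the estimate by Young/BDG with partial absorption of $\mathbb{E}[\sup_{t\le t_0}\sum_{i\ge k}|Y^\epsilon_i(t)|^2]$ into the left-hand side. The decomposition into terms and the treatment of each one (Cauchy--Schwarz plus Condition \ref{Condition G}(1) for the drifts, BDG for the two martingales, direct compensation for the quadratic variation terms) is the same as in the paper.

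However, your absorption budget does not close as described, and this matters because the statement carries explicit constants. First, you claim the $\psi_\epsilon$-drift $2\int_0^{t_0}\langle P_k\tilde X^\epsilon_s,\sigma(s,\tilde X^\epsilon_s)\psi_\epsilon(s)\rangle ds$ is bounded by $8N(\int_0^{t_0}K)\,\mathbb{E}[(1+\sup_t\|\tilde X^\epsilon_t\|_H)^2]$ with no absorption. That is not obtainable: the Young split $2ab\le \tfrac{1}{8N}a^2+8Nb^2$ with $a=\|\psi_\epsilon(s)\|_H\|P_k\tilde X^\epsilon_s\|_H$ necessarily leaves a term $\tfrac{1}{8N}\int_0^{t_0}\|\psi_\epsilon\|_H^2\,\|P_k\tilde X^\epsilon_s\|_H^2\,ds\le \tfrac18\sup_t\sum_{i\ge k}|Y^\epsilon_i(t)|^2$ (using $\int_0^T\|\psi_\epsilon\|_H^2ds\le N$), which must be absorbed on the left; avoiding absorption forces either a $(\int K)^{1/2}$ factor or a bare $\mathbb{E}[\sup\|\tilde X^\epsilon\|_H^2]$ term, neither of which appears on the claimed right-hand side. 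Second, you attach the $4\sqrt\epsilon$ absorption to the Brownian martingale and a constant $\tfrac14$ to the Poisson linear martingale; the paper does the opposite, and for good reason. The BDG bound for the Brownian term carries an overall $\sqrt\epsilon$ prefactor, so one absorbs a constant $\tfrac18$ there and pays $128\epsilon\int K$ (this, plus the $\epsilon\int K$ It\^o correction, is precisely the $129\epsilon$). The BDG bound for the Poisson linear martingale carries a prefactor $\epsilon$ inside the square root, and splitting it as $\epsilon=\epsilon^{1/2}\cdot\epsilon^{1/2}$ yields $2\epsilon^{1/2}$ on each side, hence the $4\epsilon^{1/2}$ absorption and the $4\epsilon^{1/2}$ contribution to $(4\epsilon^{1/2}+2\epsilon)$; absorbing a constant $\tfrac14$ there instead produces an $O(\epsilon)$ remainder and leaves the $4\epsilon^{1/2}$ on the right unaccounted for. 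With your allocation the left-hand coefficient and the right-hand coefficients $(8N+129\epsilon)\int K$ and $(4\epsilon^{1/2}+2\epsilon)$ cannot simultaneously come out as stated. The correct budget is $\tfrac18$ ($\psi_\epsilon$-drift) $+\tfrac18$ (Brownian martingale) $+\tfrac14$ ($(\varphi_\epsilon-1)$-drift) $+4\epsilon^{1/2}$ (Poisson martingale) $=\tfrac12+4\epsilon^{1/2}$.
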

\begin{proof}
Recall that  $\{e_k,\ k\geq 1\}$ is an orthonormal basis of the Hilbert space $H$. For convenience, let
${Y}_i^\epsilon=\langle \widetilde{X}^\epsilon,e_i\rangle$. Then
\begin{eqnarray*}
Y^{\epsilon}_{i}(t)&=&<\tilde{X}^{\epsilon}_{t},e_{i}>\nonumber\\
&=&<X_{0},e_{i}>-\zeta_{i}\int_{0}^{t}Y^{\epsilon}_{i}(s)ds
+\int_{0}^{t}\langle\sigma(s,\tilde{X}^{\epsilon}_{s})\psi_{\epsilon}(s),e_{i}\rangle \;ds
+\sqrt{\epsilon}\langle\int_{0}^{t}\sigma(s,\tilde{X}^{\epsilon}_{s})d\beta(s),e_{i}\rangle\nonumber\\
&&+\int_{0}^{t}\int_{\mathbb{X}}<G(s,\tilde{X}^{\epsilon}_{s},v),e_{i}>
(\varphi_{\epsilon}(s,v)-1)\nu(dv)ds\nonumber\\
&&+\int_{0}^{t}\int_{\mathbb{X}} <\epsilon G(s,\tilde{X}^{\epsilon}_{s-},v),e_{i}>
\tilde{N}^{\epsilon^{-1}\varphi_{\epsilon}}(ds dv).
\end{eqnarray*}

By Ito's formula,
\begin{eqnarray}
\label{ITO formula 2}
|Y^{\epsilon}_{i}(t)|^{2}&=&|<X_{0},e_{i}>|^{2}-2\zeta_{i}\int_{0}^{t}|Y^{\epsilon}_{i}|^{2}(s)ds
+2\int_{0}^{t}Y^{\epsilon}_{i}(s)\langle\sigma(s,\tilde{X}^{\epsilon}_{s})\psi_{\epsilon}(s),e_{i}\rangle \;ds
\nonumber\\
&&+2\sqrt{\epsilon}\langle \int_{0}^{t}Y^{\epsilon}_{i}(s)\sigma(s,\tilde{X}^{\epsilon}_{s})d\beta(s),e_{i}\rangle
+\epsilon\int_{0}^{t}\|\sigma^{*}(s,\tilde{X}^{\epsilon}_{s})e_{i}\|_{H}^{2}\;ds\nonumber\\
&&+2\int_{0}^{t}\int_{\mathbb{X}}Y^{\epsilon}_{i}(s)<G(s,\tilde{X}^{\epsilon}_{s},v),e_{i}>
(\varphi_{\epsilon}(s,v)-1)\nu(dv)ds\nonumber\\
&&+\int_{0}^{t}\int_{\mathbb{X}}\left(<2\epsilon G(s,\tilde{X}^{\epsilon}_{s-},v),Y^{\epsilon}_{i}(s-)e_{i}>
+|\epsilon <G(s,\tilde{X}^{\epsilon}_{s-},v),e_{i}>|^{2}\right)\tilde{N}^{\epsilon^{-1}\varphi_{\epsilon}}(ds
dv)\nonumber\\
&&+\epsilon
\int_{0}^{t}\int_{\mathbb{X}}|<G(s,\tilde{X}^{\epsilon}_{s-},v),e_{i}>|^{2}\varphi_{\epsilon}(s,v)\nu(dv)ds.
\end{eqnarray}

Therefore, for $t_{0}>0$, we obtain
\begin{eqnarray}
\label{suprem of Yi}
&&\sup_{0\leq t\leq t_{0}}\sum_{i=k}^{\infty}|Y^{\epsilon}_{i}(t)|^{2}\nonumber\\
&\leq& \sum_{i=k}^{\infty}|<X_{0},e_{i}>|^{2}
+2\int_{0}^{t_{0}}\|\sigma(s,\tilde{X}^{\epsilon}_{s})\|_{L_{2}(H)}\|\psi_{\epsilon}(s)\|_{H}
\|\sum_{i=k}^{\infty}Y^{\epsilon}_{i}(s)e_{i}\|_{H}\;ds\nonumber\\
&&+2\sqrt{\epsilon}\sup_{0\leq t\leq t_{0}}\sum_{i=k}^{\infty}\langle
\int_{0}^{t}Y^{\epsilon}_{i}(s)\sigma(s,\tilde{X}^{\epsilon}_{s})d\beta(s),e_{i}\rangle
+\epsilon\int_{0}^{t_{0}}\sum_{i=k}^{\infty}\|\sigma^{*}(s,\tilde{X}^{\epsilon}_{s})e_{i}\|_{H}^{2}\;ds\nonumber\\
&&+2\int_{0}^{t_{0}}\int_{\mathbb{X}}|<G(s,\tilde{X}^{\epsilon}_{s},v),\sum_{i=k}^{\infty}Y^{\epsilon}_{i}(s)e_{i}>
(\varphi_{\epsilon}(s,v)-1)|\nu(dv)ds\nonumber\\
&&+2\sup_{0\leq t\leq t_{0}}\int_{0}^{t}\int_{\mathbb{X}}|<\epsilon G(s,\tilde{X}^{\epsilon}_{s-},v),
\sum_{i=k}^{\infty}Y^{\epsilon}_{i}(s-)e_{i}> |\tilde{N}^{\epsilon^{-1}\varphi_{\epsilon}}(ds dv)\nonumber\\
&&+2\int_{0}^{t_{0}}\int_{\mathbb{X}}\|\epsilon
G(s,\tilde{X}^{\epsilon}_{s},v)\|_{H}^{2}N^{\epsilon^{-1}\varphi_{\epsilon}}(ds dv).
\end{eqnarray}

Firstly, we estimate the second term on the right hand side of the above inequality.
\begin{eqnarray}
&&2E[\int_{0}^{t_{0}}\|\sigma(s,\tilde{X}^{\epsilon}_{s})\|_{L_{2}(H)}\|\psi_{\epsilon}(s)\|_{H}
\|\sum_{i=k}^{\infty}Y^{\epsilon}_{i}(s)e_{i}\|_{H}\;ds]\nonumber\\
&\leq&  \frac{1}{8N}E[\int_{0}^{t_{0}}
\|\psi_{\epsilon}(s)\|_{H}^{2}\|\sum_{i=k}^{\infty}Y^{\epsilon}_{i}(s)e_{i}\|_{H}^{2}\;ds]
+8N E[\int_{0}^{t_{0}}K(s)(1+\|\tilde{X}^{\epsilon}_{s}\|_{H}^{2})\;ds]\nonumber\\
&\leq& \frac{1}{8} E[\sup_{0\leq t\leq t_{0}}\sum_{i=k}^{\infty}|Y^{\epsilon}_{i}(t)|^{2}]+8N(\int_{0}^{t_{0}}K(s)ds)
E[(1+\sup_{0\leq t\leq t_{0}}\|\tilde{X}^{\epsilon}_{t}\|_{H})^{2}].
\end{eqnarray}

The third term is bounded by,
\begin{eqnarray}
&&2\sqrt{\epsilon}E[\sup_{0\leq t\leq t_{0}}\sum_{i=k}^{\infty}\langle
\int_{0}^{t}Y^{\epsilon}_{i}(s)\sigma(s,\tilde{X}^{\epsilon}_{s})d\beta(s),e_{i}\rangle]\nonumber\\
&\leq& 8\sqrt{\epsilon}
E[\sqrt{\int_{0}^{t_{0}}\|\sigma(s,\tilde{X}^{\epsilon}_{s})\|^{2}_{L_{2}(H)}\|\sum_{i=k}^{\infty}|Y^{\epsilon}_{i}(t)e_{i}\|^{2}_{H}\;ds}]\nonumber\\
&\leq& 8\sqrt{\epsilon} E[(\sup_{0\leq t\leq
t_{0}}\|\sum_{i=k}^{\infty}Y^{\epsilon}_{i}(t)e_{i}\|_{H})\sqrt{\int_{0}^{t_{0}}\|\sigma(s,\tilde{X}^{\epsilon}_{s})\|^{2}_{L_{2}(H)}\;ds}]\nonumber\\
&\leq& \frac{1}{8} E[\sup_{0\leq t\leq t_{0}}\sum_{i=k}^{\infty}|Y^{\epsilon}_{i}(t)|^{2}]+128\epsilon
E[\int_{0}^{t_{0}}\|\sigma(s,\tilde{X}^{\epsilon}_{s})\|^{2}_{L_{2}(H)}\;ds]\nonumber\\
&\leq& \frac{1}{8} E[\sup_{0\leq t\leq t_{0}}\sum_{i=k}^{\infty}|Y^{\epsilon}_{i}(t)|^{2}]+128\epsilon
(\int_{0}^{t_{0}}K(s)ds) E[(1+\sup_{0\leq t\leq t_{0}}\|\tilde{X}^{\epsilon}_{t}\|_{H})^{2}].
\end{eqnarray}

The forth term is estimated as follows,
\begin{eqnarray}
&&\epsilon
E[\int_{0}^{t_{0}}\sum_{i=k}^{\infty}\|\sigma^{*}(s,\tilde{X}^{\epsilon}_{s})e_{i}\|_{H}^{2}\;ds]\nonumber\\
&\leq& \epsilon E[\int_{0}^{t_{0}}\|\sigma(s,\tilde{X}^{\epsilon}_{s})\|_{L_{2}(H)}^{2}\;ds]\nonumber\\
&\leq&  \epsilon(\int_{0}^{t_{0}}K(s)ds) E[(1+\sup_{0\leq t\leq t_{0}}\|\tilde{X}^{\epsilon}_{t}\|_{H})^{2}].
\end{eqnarray}

The fifth term is bounded by,
\begin{eqnarray}
&&E[\int_{0}^{t_{0}}\int_{\mathbb{X}}|<G(s,\tilde{X}^{\epsilon}_{s},v),\sum_{i=k}^{\infty}Y^{\epsilon}_{i}(s)e_{i}>|\cdot
|\varphi_{\epsilon}(s,v)-1|\nu(dv)ds]\nonumber\\
&\leq&
E[\int_{0}^{t_{0}}\int_{\mathbb{X}}\|G(s,\tilde{X}^{\epsilon}_{s},v)\|_{H}(\sum_{i=k}^{\infty}|Y^{\epsilon}_{i}(s)|^{2})^{\frac{1}{2}}
|\varphi_{\epsilon}(s,v)-1|\nu(dv)ds]\nonumber\\
&\leq& E[\sup_{0\leq t\leq t_{0}}(\sum_{i=k}^{\infty}|Y^{\epsilon}_{i}(s)|^{2})^{\frac{1}{2}}\cdot
\int_{0}^{t_{0}}\int_{\mathbb{X}}\|G(s,\tilde{X}^{\epsilon}_{s},v)\|_{H}|\varphi_{\epsilon}(s,v)-1|\nu(dv)ds]\nonumber\\
&\leq& \frac{1}{8}E[\sup_{0\leq t\leq
t_{0}}(\sum_{i=k}^{\infty}|Y^{\epsilon}_{i}(s)|^{2})]+2E[(\int_{0}^{t_{0}}\int_{\mathbb{X}}\|G(s,\tilde{X}^{\epsilon}_{s},v)\|_{H}|\varphi_{\epsilon}(s,v)-1|\nu(dv)ds)^{2}]\nonumber\\
&\leq&\frac{1}{8}E[\sup_{0\leq t\leq t_{0}}(\sum_{i=k}^{\infty}|Y^{\epsilon}_{i}(s)|^{2})]\nonumber\\
&+&2E[(1+\sup_{0\leq t\leq t_{0}}\|\tilde{X}^{\epsilon}_{t}\|)^{2}]\cdot
\sup_{g\in \mathbb{S}^{N}}(\int_{0}^{t_{0}}\int_{\mathbb{X}}\|G(s,v)\|_{0,H}|g(s,v)-1|\nu(dv)ds)^{2}
\end{eqnarray}

Set
$$
M_{t}:=\int_{0}^{t}\int_{\mathbb{X}}<\epsilon G(s,\tilde{X}^{\epsilon}_{s-},v),
\sum_{i=k}^{\infty}Y^{\epsilon}_{i}(s-)e_{i}> \tilde{N}^{\epsilon^{-1}\varphi_{\epsilon}}(ds dv).
$$

Then we have,
\begin{eqnarray}
E[\sup_{0\leq t\leq t_{0}}|M_{t}|]
&\leq& 4E\sqrt{[M]_{t_{0}}} \nonumber\\
&\leq & 4E\{\int_{0}^{t_{0}}\int_{\mathbb{X}}|<\epsilon G(s,\tilde{X}^{\epsilon}_{s-},v),
\sum_{i=k}^{\infty}Y^{\epsilon}_{i}(s-)e_{i}>|^{2} N^{\epsilon^{-1}\varphi_{\epsilon}}(ds
dv)\}^{\frac{1}{2}}\nonumber\\
&\leq& 4E\{\int_{0}^{t_{0}}\int_{\mathbb{X}}\|\epsilon G(s,\tilde{X}^{\epsilon}_{s-},v)\|^{2}
(\sum_{i=k}^{\infty}|Y^{\epsilon}_{i}(s-)|^{2} ) N^{\epsilon^{-1}\varphi_{\epsilon}}(ds
dv)\}^{\frac{1}{2}}\nonumber\\
&\leq & 4E[\{\epsilon^{\frac{1}{2}}\sup_{0\leq t\leq t_{0}}\sum_{i=k}^{\infty}|Y^{\epsilon}_{i}(t)|^{2}
\}^{\frac{1}{2}}\cdot
\{\int_{0}^{t_{0}}\int_{\mathbb{X}}\epsilon^{\frac{3}{2}}\| G(s,\tilde{X}^{\epsilon}_{s-},v)\|^{2}
N^{\epsilon^{-1}\varphi_{\epsilon}}(ds dv)\}^{\frac{1}{2}}]\nonumber\\
&\leq & 2E[\epsilon^{\frac{1}{2}}\sup_{0\leq t\leq t_{0}}\sum_{i=k}^{\infty}|Y^{\epsilon}_{i}(t)|^{2}]+
2E[\int_{0}^{t_{0}}\int_{\mathbb{X}}\epsilon^{\frac{3}{2}}\| G(s,\tilde{X}^{\epsilon}_{s-},v)\|^{2}
N^{\epsilon^{-1}\varphi_{\epsilon}}(ds dv)]\nonumber\\
&\leq & 2\epsilon^{\frac{1}{2}}E[\sup_{0\leq t\leq t_{0}}\sum_{i=k}^{\infty}|Y^{\epsilon}_{i}(t)|^{2}]+
2\epsilon^{\frac{1}{2}}E[\int_{0}^{t_{0}}\int_{\mathbb{X}}\| G(s,\tilde{X}^{\epsilon}_{s-},v)\|^{2}
\varphi_{\epsilon}(s,v)\nu(v)ds]\nonumber\\
&\leq & 2\epsilon^{\frac{1}{2}}E[\sup_{0\leq t\leq t_{0}}\sum_{i=k}^{\infty}|Y^{\epsilon}_{i}(t)|^{2}]\nonumber\\
&&+2\epsilon^{\frac{1}{2}}E[(1+\sup_{0\leq t\leq
t_{0}}\|\tilde{X}^{\epsilon}_{t}\|)^{2}](\int_{0}^{t_{0}}\int_{\mathbb{X}}\| G(s,v)\|^{2}_{0,H}
\varphi_{\epsilon}(s,v)\nu(v)ds)\nonumber\\
&\leq & 2\epsilon^{\frac{1}{2}}E[\sup_{0\leq t\leq t_{0}}\sum_{i=k}^{\infty}|Y^{\epsilon}_{i}(t)|^{2}]\nonumber\\
&&+
2\epsilon^{\frac{1}{2}}E[(1+\sup_{0\leq t\leq t_{0}}\|\tilde{X}^{\epsilon}_{t}\|)^{2}]\cdot\sup_{g\in
\mathbb{S}^{N}}(\int_{0}^{t_{0}}\int_{\mathbb{X}}\| G(s,v)\|^{2}_{0,H} g(s,v)\nu(v)ds)\nonumber\\
\end{eqnarray}

The upper bound of the last term in $(\ref{suprem of Yi})$ is given by,
\begin{eqnarray}
&&E[\int_{0}^{t_{0}}\int_{\mathbb{X}}\|\epsilon
G(s,\tilde{X}^{\epsilon}_{s},v)\|_{H}^{2}N^{\epsilon^{-1}\varphi_{\epsilon}}(ds dv)]\nonumber\\
&\leq& \epsilon E[ \int_{0}^{t_{0}}\int_{\mathbb{X}}\|
G(s,\tilde{X}^{\epsilon}_{s},v)\|_{H}^{2}\varphi_{\epsilon}(s,v)\nu(dv)ds]\nonumber\\
&\leq& \epsilon E[(1+\sup_{0\leq t\leq t_{0}}\|\tilde{X}^{\epsilon}_{t}\|)^{2}]\cdot\sup_{g\in
\mathbb{S}^{N}}(\int_{0}^{t_{0}}\int_{\mathbb{X}}\| G(s,v)\|^{2}_{0,H} g(s,v)\nu(v)ds).
\end{eqnarray}

Therefore, combining the above inequalities, we get
\begin{eqnarray}
&&E[\sup_{0\leq t\leq t_{0}}\sum_{i=k}^{\infty}|Y^{\epsilon}_{i}(t)|^{2}]\nonumber\\
&\leq& \sum_{i=k}^{\infty}|<X_{0},e_{i}>|^{2}+(\frac{1}{2}+4\epsilon^{\frac{1}{2}})E[\sup_{0\leq t\leq
t_{0}}(\sum_{i=k}^{\infty}|Y^{\epsilon}_{i}(s)|^{2})]\nonumber\\
&&+(8N+129\epsilon)(\int_{0}^{t_{0}}K(s)ds)E[(1+\sup_{0\leq t\leq t_{0}}\|\tilde{X}^{\epsilon}_{t}\|)^{2}]\nonumber\\
&&+4E[(1+\sup_{0\leq t\leq t_{0}}\|\tilde{X}^{\epsilon}_{t}\|)^{2}]\cdot \sup_{g\in
\mathbb{S}^{N}}(\int_{0}^{t_{0}}\int_{\mathbb{X}}\|G(s,v)\|_{0,H}|g(s,v)-1|\nu(dv)ds)^{2}\nonumber\\
&&+(4\epsilon^{\frac{1}{2}}+2\epsilon)E[(1+\sup_{0\leq t\leq t_{0}}\|\tilde{X}^{\epsilon}_{t}\|)^{2}]\cdot \sup_{g\in
\mathbb{S}^{N}}(\int_{0}^{t_{0}}\int_{\mathbb{X}}\| G(s,v)\|^{2}_{0,H} g(s,v)\nu(v)ds)
\end{eqnarray}
\end{proof}
\begin{lem}
There exists $\epsilon_{0}>0$, such that, for any $t_0\in[0,T]$,
\begin{eqnarray}\label{estimation-03}
   \sup_{0<\epsilon<\epsilon_0}\sup_{\phi_\epsilon=(\psi_\epsilon,\varphi_\epsilon)\in\widetilde{\mathcal{U}}^N}\mathbb{E}\sup_{t_0\leq t\leq T}\sum_{i=k}^\infty|\langle
   \tilde{X}^{\epsilon}_{t},e_i\rangle|^2
\leq
    e^{-2\zeta_kt_0}C.
\end{eqnarray}
\end{lem}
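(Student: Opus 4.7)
The plan is to imitate the Itô-formula computation of the preceding lemma but with an integrating factor $e^{2\zeta_i t}$ that exploits the eigenequation $\mathcal{A}^{*}e_i=\zeta_i e_i$. Since $\langle\mathcal{A}\tilde X^{\epsilon}_s,e_i\rangle=\zeta_i Y^{\epsilon}_i(s)$, the identity (\ref{ITO formula 2}) can be rewritten as
\begin{equation*}
d|Y^{\epsilon}_i(t)|^{2}+2\zeta_i|Y^{\epsilon}_i(t)|^{2}dt=dA_i(t),
\end{equation*}
where $dA_i$ collects the remaining drift terms (the $2Y^{\epsilon}_i\langle\sigma\psi_{\epsilon},e_i\rangle$ drift, the Brownian quadratic-variation contribution $\epsilon\|\sigma^{*}e_i\|_H^{2}$, the controlled-Poisson drift $2Y^{\epsilon}_i\int\langle G,e_i\rangle(\varphi_{\epsilon}-1)\nu$, and the compensator $\epsilon\int\langle G,e_i\rangle^{2}\varphi_{\epsilon}\nu$) together with the Brownian and Poisson martingale differentials. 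Multiplying through by $e^{2\zeta_i t}$ and integrating gives
\begin{equation*}
|Y^{\epsilon}_i(t)|^{2}=e^{-2\zeta_i t}|\langle X_0,e_i\rangle|^{2}+\int_{0}^{t}e^{-2\zeta_i(t-s)}dA_i(s).
\end{equation*}

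The exponential decay in the statement comes from the elementary observation that for $t\geq t_0$ and $i\geq k$ one has $e^{-2\zeta_i t}\leq e^{-2\zeta_k t_0}$. Summing the initial-condition term over $i\geq k$ immediately produces $\sum_{i\geq k}e^{-2\zeta_i t}|\langle X_0,e_i\rangle|^{2}\leq e^{-2\zeta_k t_0}\|X_0\|_H^{2}$. For the remaining pieces, I would first apply Fubini and the pointwise bound $e^{-2\zeta_i(t-s)}\leq e^{-2\zeta_k(t-s)}$ (valid for $i\geq k$), so that the $i$-sums collapse via the Hilbert--Schmidt identity $\sum_{i\geq k}\|\sigma^{*}e_i\|_H^{2}\leq\|\sigma\|_{L_2(H)}^{2}$ and the pointwise control $|\langle G(s,u,v),e_i\rangle|\leq\|G(s,v)\|_{0,H}(1+\|u\|_H)$ from the definitions. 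Then I would repeat, term by term, the Young-/Cauchy-Schwarz-type estimates, the BDG inequality, and the truncation to $\widetilde{\mathcal{U}}^{N}$ used in the proof of Lemma~\ref{Lemma-estimation-SPDE}, but now applied to the convolved quantities $\int_{0}^{t}e^{-2\zeta_k(t-s)}(\cdot)ds$ and the reweighted martingales. Finally, Lemma~\ref{Lemma-Condition-0,H-1,H}, estimation (\ref{estimation-01}), and Remark~\ref{Remark-03} would be used to absorb the self-referential occurrences of $\sup_{t\leq T}\|\tilde X^{\epsilon}_t\|_H^{2}$ into a single constant $C$ independent of $k$, $t_0$, $\epsilon$, and $\phi_{\epsilon}$.

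The main obstacle is propagating the $e^{-2\zeta_k t_0}$ weight through the noise-generated terms rather than only through the initial condition. For the drift contributions this requires splitting $\int_0^t=\int_0^{t_0}+\int_{t_0}^{t}$ and estimating each piece with the appropriate form of the factor $e^{-2\zeta_i(t-s)}$ (together with an $\epsilon_0$ chosen small enough that the self-referential coefficient of $\mathbb{E}\sup_{t_0\leq t\leq T}\sum_{i\geq k}|Y^{\epsilon}_i(t)|^{2}$ can be absorbed on the left-hand side, exactly as the coefficient $\tfrac12-4\epsilon^{1/2}$ is absorbed in (\ref{estimation-02})). For the martingale contributions, the point is to write them as $e^{-2\zeta_i t}\int_{0}^{t}e^{2\zeta_i s}d\mathcal{M}_i(s)$, apply Doob/BDG to the true martingale inside, and combine with the $e^{-2\zeta_i t}\leq e^{-2\zeta_k t_0}$ bound, again after summing over $i\geq k$ and using the Hilbert--Schmidt/$\|G\|_{0,H}$ controls. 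Once all the contributions are gathered, the final estimate has exactly the announced form $Ce^{-2\zeta_k t_0}$ with $C$ uniform over $0<\epsilon<\epsilon_0$ and $\phi_{\epsilon}\in\widetilde{\mathcal{U}}^{N}$.
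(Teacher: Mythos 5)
Your variation-of-constants identity is correct and disposes of the initial-condition term cleanly, but the argument does not close for the drift, compensator and martingale contributions, and that is exactly where the claimed factor $e^{-2\zeta_k t_0}$ must come from. The kernel $e^{-2\zeta_i(t-s)}$ equals $1$ at $s=t$, so for every $t\geq t_0$ the contributions from $s$ near $t$ receive no exponential damping at all: after your bounds $e^{-2\zeta_i(t-s)}\leq e^{-2\zeta_k(t-s)}\leq 1$ and the collapse of the $i$-sums via $\sum_{i\geq k}\|\sigma^{*}e_i\|_H^{2}\leq\|\sigma\|_{L_2(H)}^{2}$ (which also discards all $k$-dependence), what survives is precisely the crude estimate of Lemma \ref{Lemma-estimation-SPDE}, i.e.\ a constant independent of $k$, not $Ce^{-2\zeta_k t_0}$. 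The proposed split $\int_0^t=\int_0^{t_0}+\int_{t_0}^{t}$ does not repair this: on $[t_0,t]$ the kernel still attains the value $1$ at $s=t$, and even on $[0,t_0]$ one only gets $e^{-2\zeta_k(t-t_0)}$, which is $1$ when $t=t_0$. The martingale terms are worse. The quantity $e^{-2\zeta_i t}\int_0^t e^{2\zeta_i s}d\mathcal{M}_i(s)$ is a stochastic convolution, not a martingale; the route you propose (Doob/BDG applied to the ``true martingale inside'') yields at best $e^{-2\zeta_i t_0}\,\mathbb{E}\sup_{t\leq T}|\int_0^t e^{2\zeta_i s}d\mathcal{M}_i(s)|$, and the BDG bound for the inner object carries a factor of order $e^{2\zeta_i T}$ from its quadratic variation, which overwhelms $e^{-2\zeta_i t_0}$ and blows up as $i\to\infty$. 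Getting uniform control of such convolutions requires a genuine maximal inequality for stochastic convolutions (e.g.\ the factorization method), which you do not invoke.

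The paper avoids the convolution altogether. Starting from (\ref{ITO formula 2}) it keeps the dissipative term in integrated form, arriving at the pathwise inequality $\sum_{i\geq k}|Y_i^{\epsilon}(t)|^{2}\leq -2\zeta_k\int_0^t\sum_{i\geq k}|Y_i^{\epsilon}(s)|^{2}ds+R$, where $R=I_1+2I_3+2I_4+I_5+2I_6+2I_7+I_8$ is a single $t$-independent random variable (every other term having been majorized by its supremum over $[0,T]$). A Gronwall-type comparison then places the global prefactor $e^{-2\zeta_k t}$ in front of $R$, and only afterwards are expectations taken, with $\mathbb{E}R\leq C_N$ coming from (\ref{estimation-01}) and Lemma \ref{Lemma-Condition-0,H-1,H}. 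Whatever one thinks of that comparison step for a c\`adl\`ag process, it is the mechanism by which the exponential factor gets attached to the noise terms, and your proposal contains no substitute for it; as written, your final assertion that ``the estimate has exactly the announced form $Ce^{-2\zeta_k t_0}$'' is unsupported for every term except the initial condition.
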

\begin{proof}
By (\ref{ITO formula 2}), we have
\begin{eqnarray*}
      \sum_{i=k}^\infty|Y_i^\epsilon(t)|^2
&\leq&
        \sum_{i=k}^\infty\langle X_0,e_i\rangle^2-2\zeta_k\int_0^t\sum_{i=k}^\infty|Y_i^\epsilon(s)|^2ds\\
    &  &+
        2\int_0^t\int_\mathbb{X}\langle G(s,\widetilde{X}_s^\epsilon,v),\sum_{i=k}^\infty
        Y_i^\epsilon(s)e_i)\rangle(\varphi_\epsilon(s,v)-1)\nu(dv)ds\\
    &  &+
        2\int_0^t\int_\mathbb{X}\langle \epsilon G(s,\widetilde{X}_s^\epsilon,v),\sum_{i=k}^\infty
        Y_i^\epsilon(s)e_i)\rangle \widetilde{N}^{\epsilon^{-1}\varphi_\epsilon}(dv,ds)\\
    &  &+
         \int_0^t\int_\mathbb{X}\sum_{i=k}^\infty|\langle \epsilon G(s,\widetilde{X}_s^\epsilon,v),e_i)\rangle|^2
         N^{\epsilon^{-1}\varphi_\epsilon}(dv,ds)\nonumber\\
     &  &+
         2\int_0^t\langle\sqrt{\epsilon}\sigma(s,\widetilde{X}_s^\epsilon)d\beta_s,\sum_{i=k}^\infty
         Y_i^\epsilon(s)e_i\rangle\\
     &   &+
          2\int_0^t\langle\sigma(s,\widetilde{X}_s^\epsilon)\psi_\epsilon(s),\sum_{i=k}^\infty
          Y_i^\epsilon(s)e_i\rangle ds\\
     &   &+
          \epsilon\int_0^t\|\sigma(s,\widetilde{X}_s^\epsilon)\|^2_{L_2(H)}ds\\
&\leq&
        \sum_{i=k}^\infty\langle X_0,e_i\rangle^2-2\zeta_k\int_0^t\sum_{i=k}^\infty|Y_i^\epsilon(s)|^2ds\\
    &  &+
        2\int_0^T\int_\mathbb{X}|\langle G(s,\widetilde{X}_s^\epsilon,v),\sum_{i=k}^\infty
        Y_i^\epsilon(s)e_i)\rangle(\varphi_\epsilon(s,v)-1)|\nu(dv)ds\\
    &  &+
        2\sup_{0\leq t'\leq T}|\int_0^{t'}\int_\mathbb{X}\langle \epsilon
        G(s,\widetilde{X}_s^\epsilon,v),\sum_{i=k}^\infty Y_i^\epsilon(s)e_i)\rangle
        \widetilde{N}^{\epsilon^{-1}\varphi_\epsilon}(dv,ds)|\\
    &  &+
         \int_0^T\int_\mathbb{X}\sum_{i=k}^\infty|\langle \epsilon G(s,\widetilde{X}_s^\epsilon,v),e_i)\rangle|^2
         N^{\epsilon^{-1}\varphi_\epsilon}(dv,ds)\\
     &  &+
         2\sup_{t'\in[0,T]}|\int_0^{t'}\langle\sqrt{\epsilon}\sigma(s,\widetilde{X}_s^\epsilon)d\beta_s,\sum_{i=k}^\infty
         Y_i^\epsilon(s)e_i\rangle|\\
     &   &+
          2\int_0^T\|\sigma(s,\widetilde{X}_s^\epsilon)\psi_\epsilon(s)\|_H\|\sum_{i=k}^\infty Y_i^\epsilon(s)e_i\|_H
          ds\\
     &   &+
          \epsilon\int_0^T\|\sigma(s,\widetilde{X}_s^\epsilon)\|^2_{L_2(H)}ds\\
&=& I_1-I_2+2I_3+2I_4+I_5+2I_6+2I_7+I_8,
\end{eqnarray*}
by the Gronwall's inequality this implies that
\begin{eqnarray*}
    \sum_{i=k}^\infty|Y_i^\epsilon(t)|^2
\leq
    e^{-2\zeta_kt}[I_1+2I_3+2I_4+I_5+2I_6+2I_7+I_8].
\end{eqnarray*}

Hence, for any $t_0>0$, we have
\begin{eqnarray}\label{I1-5}
    \sup_{t_0\leq t\leq T}\sum_{i=k}^\infty|Y_i^\epsilon(t)|^2
\leq
    e^{-2\zeta_kt_0}[I_1+2I_3+2I_4+I_5+2I_6+2I_7+I_8].
\end{eqnarray}

\begin{eqnarray}\label{Eq.2-I3}
       \mathbb{E}I_3
 &\leq&
       \mathbb{E}\int_0^T\int_{\mathbb{X}}\|G(s,\widetilde{X}_s^\epsilon,v)\|_H\|\widetilde{X}_s^\epsilon\|_H|\varphi_\epsilon(s,v)-1|\nu(dv)ds\nonumber\\
 &\leq&
       \mathbb{E}\Big[\sup_{0\leq s\leq T}(\|\widetilde{X}_s^\epsilon\|_H+\|\widetilde{X}_s^\epsilon\|^2_H)
       \cdot
       \int_0^T\int_{\mathbb{X}}\|G(s,v)\|_{0,H}|\varphi_\epsilon(s,v)-1|\nu(dv)ds\Big]\\
 &\leq&
       \mathbb{E}\Big[(1+2\sup_{0\leq s\leq T}\|\widetilde{X}_s^\epsilon\|^2_H)\Big]
       \cdot
       \sup_{g\in S^N}\int_0^T\int_{\mathbb{X}}\|G(s,v)\|_{0,H}|g(s,v)-1|\nu(dv)ds\nonumber,
\end{eqnarray}

\begin{eqnarray}\label{Eq.2-I4}
     \mathbb{E}I_4
&\leq&
     \mathbb{E}\Big[\Big(\int_0^T\int_{\mathbb{X}}|\langle \epsilon G(s,\widetilde{X}_s^\epsilon,v),\sum_{i=k}^\infty
     Y_i^\epsilon(s)e_i)\rangle|^2N^{\epsilon^{-1}\varphi_\epsilon}(dv,ds)\Big)^{1/2}\Big]\nonumber\\
&\leq&
     \mathbb{E}\Big[\Big(\int_0^T\int_{\mathbb{X}}\epsilon^2\|
     G(s,\widetilde{X}_s^\epsilon,v)\|^2_H\|\widetilde{X}_s^\epsilon\|_H^2N^{\epsilon^{-1}\varphi_\epsilon}(dv,ds)\Big)^{1/2}\Big]\nonumber\\
&\leq&
     \mathbb{E}\Big[\Big(\epsilon^{1/2}\sup_{0\leq s\leq
     T}\|\widetilde{X}_s^\epsilon\|_H^2\Big)^{1/2}\Big(\int_0^T\int_{\mathbb{X}}\epsilon^{3/2}\|
     G(s,\widetilde{X}_s^\epsilon,v)\|^2_HN^{\epsilon^{-1}\varphi_\epsilon}(dv,ds)\Big)^{1/2}\Big]\nonumber\\
&\leq&
      \epsilon^{1/2}\mathbb{E}\Big(\sup_{0\leq s\leq T}\|\widetilde{X}_s^\epsilon\|_H^2\Big)
     +
      \epsilon^{1/2}\mathbb{E}\Big(\int_0^T\int_{\mathbb{X}}\|
      G(s,\widetilde{X}_s^\epsilon,v)\|^2_H\varphi_\epsilon(s,v)\nu(dv)ds\Big)\nonumber \\
&\leq&
       \epsilon^{1/2}\mathbb{E}\Big(\sup_{0\leq s\leq T}\|\widetilde{X}_s^\epsilon\|_H^2\Big)
     +
           \epsilon^{1/2}\mathbb{E}\Big[\Big(1+\sup_{0\leq s\leq T}\|\widetilde{X}_s^\epsilon\|_H\Big)^2\nonumber\\
      &&\quad \quad \quad \quad \quad \quad \quad \quad \quad \quad \quad \quad \quad \cdot
          \Big(\int_0^T\int_{\mathbb{X}}\|G(s,v)\|^2_{0,H}\varphi_\epsilon(s,v)\nu(dv)ds\Big)\Big]\nonumber\\
&\leq&
       \epsilon^{1/2}\mathbb{E}\Big(\sup_{0\leq s\leq T}\|\widetilde{X}_s^\epsilon\|_H^2\Big)
     +
           \epsilon^{1/2}\mathbb{E}\Big(1+\sup_{0\leq s\leq T}\|\widetilde{X}_s^\epsilon\|_H\Big)^2\nonumber\\
     &&\quad \quad \quad \quad \quad \quad \quad \quad \quad \quad \quad \quad \quad  \cdot
          \sup_{g\in S^N}\int_0^T\int_{\mathbb{X}}\|G(s,v)\|^2_{0,H}g(s,v)\nu(dv)ds,
     \end{eqnarray}

\begin{eqnarray}\label{Eq.2-I5}
   \mathbb{E}I_5
&\leq&
     \epsilon\mathbb{E}\Big(\int_0^T\int_{\mathbb{X}}\|
     G(s,\widetilde{X}_s^\epsilon,v)\|^2_H\varphi_\epsilon(s,v)\nu(dv)ds\Big)\nonumber\\
&\leq&
     \epsilon\mathbb{E}\Big(1+\sup_{0\leq s\leq T}\|\widetilde{X}_s^\epsilon\|_H\Big)^2
        \cdot
          \sup_{g\in S^N}\int_0^T\int_{\mathbb{X}}\|G(s,v)\|^2_{0,H}g(s,v)\nu(dv)ds,
\end{eqnarray}

\begin{eqnarray}\label{Eq.2-I6}
   \mathbb{E}I_6
&\leq&
   \sqrt{\epsilon}\mathbb{E}\Big\{[\int_0^T\|\sigma(s,\widetilde{X}_s^\epsilon)\|^2_{L_2(H)}\|\widetilde{X}_s^\epsilon\|^2_Hds]^{1/2}\Big\}\nonumber\\
&\leq&
   \sqrt{\epsilon}\mathbb{E}[\sup_{s\in[0,T]}\|\widetilde{X}_s^\epsilon\|^2_H+1]\times[\int_0^TK(s)ds]^{1/2},
\end{eqnarray}

\begin{eqnarray}\label{Eq.2-I7}
   \sup_{\psi_\epsilon\in \widetilde{S}^N}\mathbb{E}I_7
&\leq&
   \mathbb{E}\int_0^T\sqrt{K(s)}\sqrt{\|\widetilde{X}_s^\epsilon\|^2_H+1}\|\widetilde{X}_s^\epsilon\|_H\|\psi_\epsilon(s)\|_Hds\nonumber\\
&\leq&
     \mathbb{E}[\sup_{s\in[0,T]}\|\widetilde{X}_s^\epsilon\|^2_H+1]\times \Big(\int_0^TK(s)ds+N\Big),
\end{eqnarray}

\begin{eqnarray}\label{Eq.2-I8}
   \mathbb{E}I_8
&\leq&\epsilon
     \mathbb{E}[\sup_{s\in[0,T]}\|\widetilde{X}_s^\epsilon\|^2_H+1]\times \int_0^TK(s)ds,
\end{eqnarray}

%
%
By (\ref{I1-5})-(\ref{Eq.2-I8}), Lemma \ref{Lemma-Condition-0,H-1,H} and  (\ref{estimation-01}),
we have
$$
   \sup_{0<\epsilon\leq\epsilon_0}\sup_{\phi_\epsilon=(\psi_\epsilon,\varphi_\epsilon)\in\widetilde{\mathcal{U}}^N}\mathbb{E}\sup_{t_0\leq
   t\leq T}\sum_{i=k}^\infty|Y_i^\epsilon(t)|^2
\leq
    e^{-2\zeta_kt_0}C_N.
$$
\end{proof}

\begin{remark}\label{remark-04}
By Lemma \ref{Lemma-Condition-0,H-1,H}, Remark \ref{Remark-03},  (\ref{estimation-01}),
(\ref{estimation-02}) and the integrability of $K(s)$,
for every $0<\epsilon<\epsilon_0$ and $\eta>0$, there exists $t_0\in[0,T]$ such that
\begin{eqnarray}\label{estimation-02-01}
     (1/2-4\epsilon^{1/2})\mathbb{E}[\sup_{0\leq t\leq t_0}\sum_{i=k}^\infty|\langle
     \tilde{X}^{\epsilon}_{t},e_i\rangle|^2]
\leq
        \sum_{i=k}^\infty\langle \tilde{X}^{\epsilon}_{0},e_i\rangle^2
     +
            C\eta
     +
             C(\epsilon^{1/2}+\epsilon)
\end{eqnarray}
\end{remark}

In the sequel, the next two tightness results in $D([0,T],H)$ and $D([0,T],\mathbb{R})$ will be used.

\begin{lem}(\cite{Adam})\label{tight-H-cadlag}
Let $H$ be a separable Hilbert space with the inner product $\langle\cdot,\cdot \rangle$. For an orthonormal basis
$\{e_k\}_{k\in\mathbb{N}}$ in $H$, define the function $r^2_N:H\rightarrow R^+$ by
      $$
       r^2_N(x)=\sum_{k\geq N+1}\langle x,e_k\rangle^2,\ \ N\in\mathbb{N}.
      $$

Let $D$ be closed under addition which is a total subset of $H$.

Then the sequence $\{X^n\}_{n\in\mathbb{N}}$ of stochastic processes with trajectories in $D([0,T],H)$
is tight iff it is $D$-weakly tight and for every $\epsilon>0$
     \begin{eqnarray}\label{tight-H-cadlag-eq}
       \lim_{N\rightarrow\infty}\limsup_{n\rightarrow\infty}\mathbb{P}\Big(r^2_N(X^n(s))>\epsilon\ for\ some\ s,\
       0\leq s\leq T\Big)\rightarrow 0.
     \end{eqnarray}
\end{lem}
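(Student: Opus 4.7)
The plan is to prove both directions of this equivalence, treating the two implications separately.

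For necessity, suppose $\{X^n\}$ is tight in $D([0,T],H)$. Then $D$-weak tightness follows immediately: for every $h\in D$, the mapping $x(\cdot)\mapsto\langle x(\cdot),h\rangle$ is continuous from $D([0,T],H)$ to $D([0,T],\mathbb{R})$, and tightness is preserved under continuous pushforward. To verify the tail condition, I would use the fact that every compact $K\subset D([0,T],H)$ has relatively compact range $\bigcup_{x\in K}\{x(t):t\in[0,T]\}\subset H$, a standard consequence of the Skorokhod compactness characterization. For any relatively compact subset $A$ of $H$, $\sup_{x\in A}r^2_N(x)\to 0$ as $N\to\infty$ by Parseval's identity applied uniformly on $A$.

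For sufficiency, which is the substantive direction, my approach is approximation by finite-dimensional projections. Fix $\delta>0$. Since $D$ is total and closed under addition, $D$-weak tightness propagates to tightness of the $\mathbb{R}^M$-valued process $P_M X^n:=(\langle X^n,e_j\rangle)_{j=1}^M$ in $D([0,T],\mathbb{R}^M)$ for every $M$. I would select compact sets $K_M\subset D([0,T],\mathbb{R}^M)$ with $\mathbb{P}(P_M X^n\in K_M)>1-\delta 2^{-M-1}$ uniformly in $n$, and then use the tail condition to pick an increasing sequence $N_k\uparrow\infty$ satisfying
\[
\limsup_{n\to\infty}\mathbb{P}\Big(\sup_{0\le s\le T}r^2_{N_k}(X^n(s))>1/k^2\Big)<\delta 2^{-k-1}.
\]
The candidate tight set is then
\[
K_\delta=\Big\{x\in D([0,T],H):P_{N_k}x\in K_{N_k}\ \text{and}\ \sup_{0\le s\le T}r^2_{N_k}(x(s))\le 1/k^2\ \forall k\Big\},
\]
which by a Borel--Cantelli style union bound satisfies $\liminf_{n\to\infty}\mathbb{P}(X^n\in K_\delta)\ge 1-\delta$.

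The hard part will be showing that $K_\delta$ is genuinely relatively compact in $D([0,T],H)$. I plan to proceed by a diagonal extraction: given any sequence $\{x^m\}\subset K_\delta$, successively extract a subsequence along which $P_{N_k}x^m$ converges in $D([0,T],\mathbb{R}^{N_k})$ for every $k$. The consistency relation $P_{N_k}(P_{N_{k+1}}x)=P_{N_k}x$ together with a careful reconciliation of the Skorokhod time changes at each level yields a coherent limit $x\in D([0,T],H)$. The uniform tail estimate $\sup_s r^2_{N_k}(x^m(s))\le 1/k^2$ will then upgrade convergence of the finite-dimensional marginals to convergence in $H$-norm (uniformly on $[0,T]$ modulo time changes), completing the argument.
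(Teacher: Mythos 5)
This lemma is quoted verbatim from Jakubowski \cite{Adam}; the paper offers no proof of it, so the only meaningful comparison is with Jakubowski's original argument. Your necessity direction is correct and standard: continuity of $x\mapsto\langle x(\cdot),h\rangle$ from $D([0,T],H)$ to $D([0,T],\mathbb{R})$ gives $D$-weak tightness, and the fact that a compact subset of $D([0,T],H)$ has relatively compact range in $H$, combined with the uniform decay of $r_N^2$ on relatively compact subsets of $H$, gives the tail condition.

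The sufficiency direction, however, contains a genuine gap at its very first step, and it is exactly the step where the whole content of the theorem lives. You assert that ``$D$-weak tightness propagates to tightness of the $\mathbb{R}^M$-valued process $P_MX^n$ in $D([0,T],\mathbb{R}^M)$.'' This does not follow from tightness of the one-dimensional projections alone: the Skorokhod $J_1$ topology on $D([0,T],\mathbb{R}^M)$ is strictly finer than the product of the coordinatewise Skorokhod topologies, and coordinatewise tightness does not imply joint tightness. The standard counterexample is $x_n=1_{[1/2,1]}$, $y_n=1_{[1/2+1/n,1]}$: each coordinate sequence converges in $D([0,1],\mathbb{R})$, but the pair $(x_n,y_n)$ has no convergent subsequence in $D([0,1],\mathbb{R}^2)$, since no single time change can align both jumps. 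This is precisely why the hypothesis that $D$ is \emph{closed under addition} appears in the statement: Jakubowski's proof uses tightness of $\langle X^n,h_1\rangle$, $\langle X^n,h_2\rangle$ \emph{and} $\langle X^n,h_1+h_2\rangle$ together to control the joint modulus $w'$ of the pair, via an inequality relating the modulus of a vector-valued path to the moduli of its coordinates and of their sum. Your proposal never invokes the addition-closedness, which is a strong sign the key idea is missing. A secondary issue in the same step is that the basis vectors $e_j$ need not belong to $D$ ($D$ is only assumed total and closed under addition), so even one-dimensional tightness of $\langle X^n,e_j\rangle$ requires an approximation argument using the tail condition. The subsequent diagonal extraction and reconciliation of time changes is plausible in outline once finite-dimensional joint tightness is secured, but as written the proof does not establish the theorem.
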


Here we say a $H$-valued sequence $\{X^n\}_{n\in\mathbb{N}}$ is`` $D$-weakly tight" (in (\cite{Adam})) if $\langle X^n, \phi \rangle$ as a $\mathbb{R}$-valued sequence is tight, for every $\phi\in D$.

In order to prove ``$D$-weakly tight" in Lemma \ref{tight-H-cadlag}, we need the tightness result in
$D([0,T],\mathbb{R})$;
and one can refer to \cite{Aldous}.

Let $\{Y^\epsilon\}_{\epsilon\in(0,\epsilon_0]}$ be a sequence of random elements of $D([0,T],\mathbb{R})$, and
$\{\tau_\epsilon,\delta_\epsilon\}$ be such that:

$(a)$ for each $\epsilon$, $\tau_\epsilon$ is a stopping time with respect to the
natural $\sigma$-fields, and takes only finitely many values.

$(b)$ for each $\epsilon$, the constant $\delta_\epsilon\in[0,T]$ and
$\delta_\epsilon\rightarrow 0$ as $\epsilon\rightarrow 0$.

We introduce the following condition on $\{Y^\epsilon\}$ : for each
sequence $\{\tau_\epsilon,\delta_\epsilon\}$ satisfying $(a)(b),$
\[\text{\textbf{Condition (A)}}\ \ \ \ \ \
\ Y^\epsilon(\tau_\epsilon+\delta_\epsilon)-Y^\epsilon(\tau_\epsilon)\rightarrow 0,\ as\ \epsilon\rightarrow0,\text{\
in\ probability}.
\]

For $f\in D([0,T],\mathbb{R})$, let $J(f)$ denote the maximum of the jump
$|f(t)-f(t-)|$.

\begin{lem}(\cite{Adam})\label{tight-R-cadlag}
Suppose that $\{Y^\epsilon\}_{\epsilon\in\mathbb{N}}$ satisfies \textbf{Condition $(A)$}, and either
$\{Y^\epsilon(0)\}$ and $\{J(Y^\epsilon)\}$ are tight on the line; or $\{Y^\epsilon(t)\}$
is tight on the line for each $t\in[0,T]$, then $\{Y^\epsilon\}$ is tight
in $D([0,T],\mathbb{R})$.
\end{lem}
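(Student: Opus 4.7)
The plan is to reduce the statement to Skorokhod's compactness characterization of $D([0,T],\mathbb{R})$: a sequence $\{Y^\epsilon\}$ is tight in $D([0,T],\mathbb{R})$ if and only if (i) the one-dimensional marginals $\{Y^\epsilon(t)\}$ are tight in $\mathbb{R}$ for each $t$ in a dense subset of $[0,T]$, and (ii) for every $\eta>0$,
\begin{equation*}
\lim_{\delta\to 0}\limsup_{\epsilon\to 0}\mathbb{P}\bigl(w'_\delta(Y^\epsilon)\geq\eta\bigr)=0,
\end{equation*}
where $w'_\delta$ denotes the c\`adl\`ag modulus of continuity (infimum over partitions with mesh exceeding $\delta$ of the maximum oscillation on a subinterval). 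So I would derive (i) and (ii) from Condition (A) together with the stated hypotheses.

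For (i), the second alternative gives it directly. Under the first alternative, I would use deterministic-valued stopping times $\tau_\epsilon\equiv t_k$ on a dyadic grid $\{t_k\}$ of $[0,T]$; Condition (A) applied with $\delta_\epsilon=t_{k+1}-t_k$ yields $Y^\epsilon(t_{k+1})-Y^\epsilon(t_k)\to 0$ in probability for each fixed $k$. Combining tightness of $\{Y^\epsilon(0)\}$ with the telescoping representation $Y^\epsilon(t_k)=Y^\epsilon(0)+\sum_{j<k}(Y^\epsilon(t_{j+1})-Y^\epsilon(t_j))$ then yields tightness of each $\{Y^\epsilon(t_k)\}$; tightness of $\{J(Y^\epsilon)\}$ allows interpolation between grid points.

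The heart of the proof is (ii), for which I argue by contradiction. Suppose (ii) fails: there exist $\eta,\alpha>0$, a subsequence $\epsilon_n\to 0$, and $\delta_n\downarrow 0$ with $\mathbb{P}(w'_{\delta_n}(Y^{\epsilon_n})\geq\eta)\geq\alpha$. Failure of the c\`adl\`ag modulus on this event produces random times $\sigma_n<\rho_n$ with $\rho_n-\sigma_n\leq\delta_n$ and $|Y^{\epsilon_n}(\rho_n)-Y^{\epsilon_n}(\sigma_n)|\geq\eta$ that are not witnessed by a single jump. Pick a deterministic grid of $[0,T]$ with mesh $\delta_n$ and replace $\sigma_n$ by the largest grid point not exceeding $\sigma_n$; this yields a stopping time $\tau_n$ taking only finitely many values, and with the deterministic choice $\delta'_n=2\delta_n\to 0$ the increment $|Y^{\epsilon_n}(\tau_n+\delta'_n)-Y^{\epsilon_n}(\tau_n)|$ is bounded below by $\eta/2$ on an event of probability at least $\alpha/2$, contradicting Condition (A).

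The main obstacle is the stopping-time construction in step (ii): one must simultaneously guarantee that $\tau_n$ takes finitely many values (so that hypothesis (a) of Condition (A) applies), that $\delta'_n$ is a deterministic constant tending to zero, and that the resulting increment genuinely captures the failure of the modulus rather than being absorbed by a single large jump. This requires a careful two-time-point extraction from the event $\{w'_{\delta_n}(Y^{\epsilon_n})\geq\eta\}$ together with a discretization argument to satisfy the measurability and finiteness requirements without losing probability mass.
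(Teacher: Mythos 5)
The paper offers no proof of this lemma: it is Aldous's tightness criterion, quoted from the literature (the text points the reader to \cite{Aldous}), so there is no in-paper argument to compare yours against. Judged on its own terms, your reduction to one-dimensional tightness plus control of the c\`adl\`ag modulus $w'_\delta$ is the right architecture, but the central step (ii) has a genuine gap, and step (i) under the first alternative has a smaller one.

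The gap in (ii): from the event $\{w'_{\delta_n}(Y^{\epsilon_n})\ge\eta\}$ you extract $\sigma_n<\rho_n$ with $\rho_n-\sigma_n\le\delta_n$ and $|Y^{\epsilon_n}(\rho_n)-Y^{\epsilon_n}(\sigma_n)|\ge\eta$, round $\sigma_n$ down to a grid point $\tau_n$ of mesh $\delta_n$, and claim $|Y^{\epsilon_n}(\tau_n+2\delta_n)-Y^{\epsilon_n}(\tau_n)|\ge\eta/2$ with probability at least $\alpha/2$. This is false in general. Consider a path equal to $0$ before $\sigma_n$, equal to $\eta$ on $[\sigma_n,\rho_n)$, and equal to $0$ from $\rho_n$ on: since the two jumps lie within $\delta_n$ of each other, no partition of mesh exceeding $\delta_n$ can place points at both jump times, so $w'_{\delta_n}\ge\eta$; yet for $\tau_n<\sigma_n$ one has $Y^{\epsilon_n}(\tau_n)=0$ and, because $\tau_n+2\delta_n\ge\sigma_n+\delta_n\ge\rho_n$, also $Y^{\epsilon_n}(\tau_n+2\delta_n)=0$, so the increment vanishes. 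A single deterministic lag from a single stopping time cannot detect an excursion that begins and ends inside the window. This is precisely where Aldous's proof needs its two characteristic devices: work with a \emph{pair} of stopping times $\sigma\le\rho\le\sigma+2\delta$, note that $|Y(\rho)-Y(\sigma)|\ge\eta$ forces, for every lag $u$, either $|Y(\sigma+u)-Y(\sigma)|\ge\eta/2$ or $|Y(\rho+u')-Y(\rho)|\ge\eta/2$ for a suitable $u'$, and then \emph{average over the lag} $u\in[\delta,2\delta]$ (a Fubini argument) to produce one deterministic lag at which the post-stopping-time increment is large with probability bounded below. Without that averaging step no contradiction with Condition (A) is reached. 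A secondary issue in (i): Condition (A) only constrains increments over lags $\delta_\epsilon\to0$, so it cannot be invoked with the fixed lag $\delta_\epsilon=t_{k+1}-t_k$ of a fixed dyadic grid; in the standard scheme one first proves the modulus bound (ii) and only then deduces one-dimensional tightness at every $t$ from tightness of $\{Y^\epsilon(0)\}$ and of $\{J(Y^\epsilon)\}$.
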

\begin{thm}\label{Prop-2}
Fix $M\in\mathbb{N}$, and let $\phi_\epsilon=(\psi_\epsilon,\varphi_\epsilon),\phi=(\psi,\varphi)\in \widetilde{\mathcal{U}}^M$ be such that
 $\phi_\epsilon$ converges in distribution to $u$ as $\epsilon\rightarrow 0$. Then
 $$\mathcal{G}^{\epsilon}(\sqrt{\epsilon}\beta+\int_0^\cdot\psi_\epsilon(s)ds, \epsilon N^{\epsilon^{-1}\varphi_\epsilon})\Rightarrow \mathcal{G}^0(\int_0^\cdot\psi(s)ds, \nu^\varphi).$$

%
%
\end{thm}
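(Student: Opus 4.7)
The plan is to follow the two-step weak convergence strategy: first establish tightness of $\{\tilde{X}^\epsilon\}$ in $D([0,T],H)$, then identify every weak limit as $\mathcal{G}^0(\int_0^\cdot \psi(s)ds,\nu_T^\varphi)=\widetilde{X}^\phi$. Pathwise uniqueness for the skeleton equation (Theorem \ref{Main-thm-01}) then upgrades convergence of subsequences to convergence of the whole family.

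For tightness I would invoke Lemma \ref{tight-H-cadlag} with the orthonormal basis $\{e_k\}$ of $H$ coming from the eigenvectors of $\mathcal{A}^*$ and with $D=H_{\mathcal{A}^*}$, which is dense in $H$ and closed under addition. The ``$D$-weak tightness'' reduces to tightness of $\{\langle \tilde{X}^\epsilon,\phi\rangle\}$ in $D([0,T],\mathbb{R})$ for each $\phi\in H_{\mathcal{A}^*}$. I would verify Aldous's Condition (A) for $\langle \tilde{X}^\epsilon_\cdot,\phi\rangle$ by writing the process in the weak form: each deterministic term is controlled via Lemma \ref{Lemma-Condition-0,H-1,H} and Cauchy--Schwarz using $\int_0^T\|\psi_\epsilon\|_H^2 ds\le 2M$, while the two stochastic integrals vanish in $L^2$ on intervals of length $\delta_\epsilon\to 0$ because they carry the factor $\sqrt{\epsilon}$ or $\epsilon$ together with the a priori bound (\ref{estimation-01}). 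Pointwise tightness of $\langle \tilde{X}^\epsilon(t),\phi\rangle$ on $\mathbb{R}$ follows at once from $\sup_\epsilon \mathbb{E}\sup_{t\le T}\|\tilde{X}^\epsilon_t\|_H^2<\infty$. Hence Lemma \ref{tight-R-cadlag} applies.

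The tail condition (\ref{tight-H-cadlag-eq}) is the place where the argument splits the interval $[0,T]=[0,t_0]\cup[t_0,T]$. Given $\eta>0$, I choose $t_0>0$ so small that Remark \ref{remark-04} (i.e.\ estimate (\ref{estimation-02-01})) forces $\mathbb{E}\sup_{t\le t_0}\sum_{i\ge k}|\langle \tilde{X}^\epsilon_t,e_i\rangle|^2\le C\eta+C(\epsilon^{1/2}+\epsilon)+\sum_{i\ge k}\langle X_0,e_i\rangle^2$ uniformly in $\epsilon,k$; then on $[t_0,T]$ I use the exponential bound (\ref{estimation-03}) which gives a factor $e^{-2\zeta_k t_0}\to 0$. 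Combining the two intervals and taking first $k\to\infty$ and then $\eta\to 0$ yields (\ref{tight-H-cadlag-eq}). Joint tightness of $(\tilde{X}^\epsilon,\phi_\epsilon,\sqrt{\epsilon}\beta,\epsilon N^{\epsilon^{-1}\varphi_\epsilon})$ on $D([0,T],H)\times\bar{S}^M\times C([0,T],\mathbb{R}^\infty)\times\mathbb{M}$ then follows since $\bar{S}^M$ is compact, $\sqrt{\epsilon}\beta\Rightarrow 0$ in $C([0,T],\mathbb{R}^\infty)$, and $\epsilon N^{\epsilon^{-1}\varphi_\epsilon}\Rightarrow \nu_T^\varphi$ in $\mathbb{M}$.

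The identification of the limit is the principal obstacle. By Skorokhod's representation I pass to a new probability space on which $(\tilde{X}^\epsilon,\psi_\epsilon,\varphi_\epsilon,\sqrt{\epsilon}\beta,\epsilon N^{\epsilon^{-1}\varphi_\epsilon})\to (\tilde{X},\psi,\varphi,0,\nu_T^\varphi)$ almost surely along a subsequence. Testing the controlled SPDE against $\phi\in H_{\mathcal{A}^*}$, the two genuinely stochastic pieces $\sqrt{\epsilon}\int_0^t\langle \sigma(s,\tilde{X}_s^\epsilon)d\beta(s),\phi\rangle$ and $\epsilon\int_0^t\!\int_\mathbb{X}\langle G(s,\tilde{X}_{s-}^\epsilon,v),\phi\rangle\,\widetilde{N}^{\epsilon^{-1}\varphi_\epsilon}(ds\,dv)$ converge to $0$ in $L^2$ by the It\^o/Poisson isometries together with the growth bounds from Condition \ref{Condition G} and Lemma \ref{Lemma-Condition-0,H-1,H}. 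For the deterministic control term $\int_0^t\langle\sigma(s,\tilde{X}_s^\epsilon)\psi_\epsilon(s),\phi\rangle ds$ I split into $\sigma(s,\tilde{X}_s^\epsilon)-\sigma(s,\tilde{X}_s)$ handled by the Lipschitz bound and Cauchy--Schwarz using $\int_0^T\|\psi_\epsilon\|_H^2 ds\le 2M$, plus $\int_0^t\langle\sigma(s,\tilde{X}_s)(\psi_\epsilon-\psi)(s),\phi\rangle ds$, which tends to $0$ by weak convergence in $L^2([0,T];H)$ once one notices that $s\mapsto \sigma(s,\tilde{X}_s)^*\phi$ lies in $L^2([0,T];H)$; the computation mirrors (\ref{pf-main-thm-02-limit-05}). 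The Poisson drift $\int_0^t\!\int_\mathbb{X}\langle G(s,\tilde{X}_s^\epsilon,v),\phi\rangle(\varphi_\epsilon(s,v)-1)\nu(dv)ds$ is the most delicate: I handle it exactly as in (\ref{pf-main-thm-02-limit-02})--(\ref{pf-main-thm-02-limit-03}) by first replacing $\tilde{X}^\epsilon$ by $\tilde{X}$ using the Lipschitz bound in Lemma \ref{Lemma-Condition-0,H-1,H} and Remark \ref{Remark-03}, and then applying Lemma \ref{lem-thm2-02}(a) to the fixed integrand $\langle G(s,\tilde{X}_s,v),\phi\rangle$ whose exponential integrability is guaranteed by Condition \ref{Condition Exponential Integrability} and Remark \ref{Remark-01}. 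The limit $\tilde{X}$ therefore solves the skeleton equation (\ref{main-thm-01-eq}) tested against every $\phi\in H_{\mathcal{A}^*}$, hence, by density, against every $\phi\in V$, and uniqueness in Theorem \ref{Main-thm-01} gives $\tilde{X}=\widetilde{X}^\phi=\mathcal{G}^0(\int_0^\cdot\psi(s)ds,\nu_T^\varphi)$, completing the proof.
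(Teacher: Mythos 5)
Your proposal follows essentially the same route as the paper: tightness via Lemma \ref{tight-H-cadlag} and the Aldous criterion of Lemma \ref{tight-R-cadlag} (with the tail condition obtained by splitting $[0,T]$ into $[0,t_0]$ and $[t_0,T]$ using (\ref{estimation-02-01}) and (\ref{estimation-03})), followed by Skorokhod representation, vanishing of the two stochastic integrals, identification of the limit with the skeleton equation as in Proposition \ref{Prop-1}, and uniqueness from Theorem \ref{Main-thm-01}. The argument is correct and matches the paper's proof in all essential steps.
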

\begin{proof}
 First, we prove that $\mathcal{G}^{\epsilon}(\sqrt{\epsilon}\beta+\int_0^\cdot\psi_\epsilon(s)ds, \epsilon N^{\epsilon^{-1}\varphi_\epsilon})$
is tight in $D([0,T],H)$. We will use Lemma \ref{tight-H-cadlag} and Lemma \ref{tight-R-cadlag}
to prove this result.

By (\ref{estimation-02-01}) and (\ref{estimation-03}), it follows that for any $\delta>0$ and $t_{0}\leq T$,
\begin{eqnarray*}
&&\sup_{0\leq \epsilon\leq\epsilon_{0}}\sup_{\phi_{\epsilon}=(\psi_{\epsilon},\varphi_{\epsilon})\in \tilde{\mathcal{U}}^{N}}
P(\sup_{0\leq t\leq T}\sum_{i=k}^{\infty}\langle\tilde{X}_{t}^{\epsilon},e_{i}\rangle^{2}>\delta)\\
&\leq& \delta^{-1}\sup_{0\leq \epsilon\leq\epsilon_{0}}\sup_{\phi_{\epsilon}=(\psi_{\epsilon},\varphi_{\epsilon})\in \tilde{\mathcal{U}}^{N}}
\mathbb{E}(\sup_{0\leq t\leq t_{0}}\sum_{i=k}^{\infty}\langle\tilde{X}_{t}^{\epsilon},e_{i}\rangle^{2})\\
&&+\delta^{-1}\sup_{0\leq \epsilon\leq\epsilon_{0}}\sup_{\phi_{\epsilon}=(\psi_{\epsilon},\varphi_{\epsilon})\in \tilde{\mathcal{U}}^{N}}
\mathbb{E}(\sup_{t_{0}\leq t\leq T}\sum_{i=k}^{\infty}\langle\tilde{X}_{t}^{\epsilon},e_{i}\rangle^{2})\\
&=:&(I)+(II).
\end{eqnarray*}

By estimates (\ref{estimation-02}), for any $\tilde{\epsilon}>0$, there exists $K_{1}>0$ and $\tilde{t}_{0}>0$, such that for any $k>K_{1}$ and $t_{0}<\tilde{t}_{0}$, we have $(I)\leq \frac{\tilde{\epsilon}}{2}$.

Fixing a constant $0<t_{0}\leq \tilde{t}_{0}$, by estimates (\ref{estimation-03}), we know that there exists constant $K_{2}>0$, such that for any $k>K_{2}$,
\begin{eqnarray}
(II)\leq  e^{-2\zeta_kt_0}C\leq \frac{\tilde{\epsilon}}{2}.
\end{eqnarray}

Since $\tilde{\epsilon}$, we have
$$
\lim_{k\rightarrow\infty}\sup_{0\leq \epsilon\leq\epsilon_{0}}\sup_{q_{\epsilon}=(\psi_{\epsilon},\varphi_{\epsilon})\in \tilde{\mathcal{U}}^{N}}
P(\sup_{0\leq t\leq T}\sum_{i=k}^{\infty}\langle\tilde{X}_{t}^{\epsilon},e_{i}\rangle^{2}>\delta)=0.
$$

 Hence by Lemma
\ref{tight-H-cadlag}, we only need to prove that, for every $\phi\in H_{\mathcal{A}^*}$,
$\langle\mathcal{G}^{\epsilon}(\sqrt{\epsilon}\beta+\int_0^\cdot\psi_\epsilon(s)ds, \epsilon N^{\epsilon^{-1}\varphi_\epsilon}),\phi\rangle$ is tight in
$D([0,T],\mathbb{R})$.

For convenience, denote $\langle\mathcal{G}^{\epsilon}(\sqrt{\epsilon}\beta+\int_0^\cdot\psi_\epsilon(s)ds, \epsilon N^{\epsilon^{-1}\varphi_\epsilon}),\phi\rangle$ by
$Y^\epsilon$.
We will check that $Y^\epsilon$ satisfies the condition of Lemma \ref{tight-R-cadlag}.

By (\ref{estimation-01}),
$$
\sup_{0<\epsilon<\epsilon_0}\mathbb{P}(|Y^\epsilon(t)|>L)\leq C/{L^2},
$$
hence $\{Y^\epsilon(t)\}$ is tight on the line for each $t\in[0,T]$.

Hence it remains to prove $Y^\epsilon$ satisfy \textbf{Condition $(A)$}. For each sequence
$\{\tau_\epsilon,\delta_\epsilon\}$ satisfying $(a)(b),$
   \begin{eqnarray}
       Y^\epsilon(\tau_\epsilon+\delta_\epsilon)-Y^\epsilon(\tau_\epsilon)
     &=&
         -\int_{\tau_\epsilon}^{\tau_\epsilon+\delta_\epsilon}\langle \widetilde{X}^\epsilon(s),\mathcal{A}^*\phi\rangle
         ds\nonumber\\
       &  &+
         \int_{\tau_\epsilon}^{\tau_\epsilon+\delta_\epsilon}\int_\mathbb{X}
                              \langle G(s,\widetilde{X}^\epsilon(s),v),\phi\rangle
              \Big(\epsilon N^{\epsilon^{-1}\varphi_\epsilon}(dv,ds)-\nu(dv)ds\Big)\nonumber\\
       &  &+
           \int_{\tau_\epsilon}^{\tau_\epsilon+\delta_\epsilon}\langle\sqrt{\epsilon}\sigma(s,\widetilde{X}^\epsilon(s))d\beta_s,\phi\rangle\nonumber\\
       &  &+
           \int_{\tau_\epsilon}^{\tau_\epsilon+\delta_\epsilon}\langle\sigma(s,\widetilde{X}^\epsilon(s))\psi_\epsilon(s),\phi\rangle ds\nonumber\\
     &=&
        -\int_{\tau_\epsilon}^{\tau_\epsilon+\delta_\epsilon}\langle \widetilde{X}^\epsilon(s),\mathcal{A}^*\phi\rangle
        ds\nonumber\\
       &  &+
         \int_{\tau_\epsilon}^{\tau_\epsilon+\delta_\epsilon}\int_\mathbb{X}
                              \langle G(s,\widetilde{X}^\epsilon(s),v),\phi\rangle
              \epsilon \widetilde{N}^{\epsilon^{-1}\varphi_\epsilon}(dv,ds)\nonumber\\
       &   &+
         \int_{\tau_\epsilon}^{\tau_\epsilon+\delta_\epsilon}\int_\mathbb{X}
                              \langle G(s,\widetilde{X}^\epsilon(s),v),\phi\rangle
              (\varphi_\epsilon-1)\nu(dv)ds \nonumber\\
       &  &+
           \int_{\tau_\epsilon}^{\tau_\epsilon+\delta_\epsilon}\langle\sqrt{\epsilon}\sigma(s,\widetilde{X}^\epsilon(s))d\beta_s,\phi\rangle\nonumber\\
       &  &+
           \int_{\tau_\epsilon}^{\tau_\epsilon+\delta_\epsilon}\langle\sigma(s,\widetilde{X}^\epsilon(s))\psi_\epsilon(s),\phi\rangle ds\nonumber\\
     &=&
        I_1+I_2+I_3+I_4+I_5,
   \end{eqnarray}

\begin{eqnarray}\label{Condition A-01}
       \mathbb{E}(|I_1|)
\leq
      \delta_\epsilon\|\mathcal{A}^*\phi\|^2_{H}+\mathbb{E}\int_{\tau_\epsilon}^{\tau_\epsilon+\delta_\epsilon}\|\widetilde{X}^\epsilon(s)\|^2_H
      ds
\leq
     \delta_\epsilon\Big(\|\mathcal{A}^*\phi\|^2_{H}+\mathbb{E}(\sup_{s\in[0,T]}\|\widetilde{X}^\epsilon(s)\|^2_H)\Big),
\end{eqnarray}

\begin{eqnarray}\label{Condition A-02}
       \mathbb{E}(|I_2|^2)
&\leq&
      \mathbb{E}\Big(\int_{\tau_\epsilon}^{\tau_\epsilon+\delta_\epsilon}\int_\mathbb{X}
                              \|\epsilon G(s,\widetilde{X}^\epsilon(s),v)\|_H^2\|\phi\|_H^2
              \epsilon^{-1}\varphi_\epsilon(v,s)\nu(dv)ds\Big)\\
&\leq&
      \epsilon\|\phi\|_H^2\Big(\mathbb{E}\sup_{s\in[0,T]}\|\widetilde{X}^\epsilon(s)\|^2_H+1\Big)\sup_{g\in
      S^M}\int_{0}^{T}\int_\mathbb{X}
                              \|G(s,v)\|_{0,H}^2
              g(v,s)\nu(dv)ds,\nonumber
\end{eqnarray}

\begin{eqnarray*}\label{Condition A-03-01}
       \mathbb{E}(|I_3|)
&\leq&
      \mathbb{E}\Big(\int_{\tau_\epsilon}^{\tau_\epsilon+\delta_\epsilon}\int_\mathbb{X}
                              \|G(s,\widetilde{X}^\epsilon(s),v)\|_H\|\phi\|_H
              |\varphi_\epsilon-1|\nu(dv)ds\Big)\nonumber\\
&\leq&
      \|\phi\|_H\mathbb{E}\Big((1+\sup_{s\in[0,T]}\|\widetilde{X}^\epsilon(s)\|_H)\int_{\tau_\epsilon}^{\tau_\epsilon+\delta_\epsilon}\int_\mathbb{X}
                              \|G(s,v)\|_{0,H}
              |\varphi_\epsilon-1|\nu(dv)ds\Big),
\end{eqnarray*}

By the same argument leading to (\ref{pf-main-thm-02-UI-02-02}), we can show that, for every $\eta>0$,
there exists $\delta>0$, such that
\begin{eqnarray*}\label{Condition A-03-02}
\sup_{g\in S^M}\int_A\int_\mathbb{X}\|G(s,v)\|_{0,H}|g(v,s)-1|\nu(dv)ds\leq \eta,\ \forall\ A\in[0,T],\
\lambda_T(A)\leq\delta.
\end{eqnarray*}

Hence, if $\delta_\epsilon\leq\delta$, we deduce that
\begin{eqnarray}\label{Condition A-03-03}
       \mathbb{E}(|I_3|)
&\leq&
      \eta\|\phi\|_H\mathbb{E}(1+\sup_{s\in[0,T]}\|\widetilde{X}^\epsilon(s)\|_H).
\end{eqnarray}

For the terms in $I_{4}$ and $I_{5}$, we have
\begin{eqnarray}\label{Condition A-04}
       \mathbb{E}(|I_4|^2)
&\leq&
      \epsilon\|\phi\|^2_H\mathbb{E}\int_0^T\|\sigma(s,\widetilde{X}^\epsilon(s))\|^2_{L_2(H)}ds\nonumber\\
&\leq&
      \epsilon\|\phi\|^2_H(\mathbb{E}[\sup_{s\in[0,T]}\|\widetilde{X}^\epsilon(s)\|^2_H]+1)\times\int_0^TK(s)ds;
\end{eqnarray}

\begin{eqnarray*}\label{Condition A-05-01}
       \mathbb{E}(|I_5|)
&\leq&
      \|\phi\|_H\mathbb{E}\int_{\tau_\epsilon}^{\tau_\epsilon+\delta_\epsilon}\|\sigma(s,\widetilde{X}^\epsilon(s))\|_{L_2(H)}\|\psi_\epsilon(s)\|_{H}ds\nonumber\\
&\leq&
      \|\phi\|_H\mathbb{E}\int_{\tau_\epsilon}^{\tau_\epsilon+\delta_\epsilon}\sqrt{K(s)}\sqrt{\|\widetilde{X}^\epsilon(s)\|^2_{H}+1}\|\psi_\epsilon(s)\|_{H}ds\nonumber\\
&\leq&
      M^{1/2}\|\phi\|_H\mathbb{E}\Big\{\sup_{s\in[0,T]}\sqrt{\|\widetilde{X}^\epsilon(s)\|^2_H+1}\times[\int_{\tau_\epsilon}^{\tau_\epsilon+\delta_\epsilon}K(s)ds]^{1/2}\Big\},
\end{eqnarray*}
where $M=\int_{0}^{T}\|\psi_{\epsilon}(s)\|_{H}^{2}ds$.

Since $K(\cdot)\in L^1([0,T],\mathbb{R})$, there exists $\widetilde{\delta}>0$ such that, if $\delta_\epsilon\leq\widetilde{\delta}$,
\begin{eqnarray}\label{Condition A-05-02}
       \mathbb{E}(|I_5|)
&\leq&
       \eta M^{1/2}\|\phi\|_H\mathbb{E}[\sqrt{\sup_{s\in[0,T]}\|\widetilde{X}^\epsilon(s)\|^2_H+1}].
\end{eqnarray}

By (\ref{Condition A-01})-(\ref{Condition A-05-02}) and Chebyshev inequality, we see that \textbf{Condition (A)} holds.
Thus we have proved that $\mathcal{G}^{\epsilon}(\sqrt{\epsilon}\beta+\int_0^\cdot\psi_\epsilon(s)ds, \epsilon N^{\epsilon^{-1}\varphi_\epsilon})$  is tight in $D([0,T],H)$.

\vspace{3mm}
Finally, we prove that $\mathcal{G}^0(\int_0^\cdot\psi(s)ds, \nu^\varphi)$ is the unique limit point of $\mathcal{G}^{\epsilon}(\sqrt{\epsilon}\beta+\int_0^\cdot\psi_\epsilon(s)ds, \epsilon N^{\epsilon^{-1}\varphi_\epsilon})$.

Note that $\widetilde{X}^\epsilon$ satisfies
\begin{eqnarray}
   \langle\widetilde{X}^\epsilon(t),\phi\rangle
&=&
      \langle X_0,\phi\rangle
      -
      \int_{0}^{t}\langle \widetilde{X}^\epsilon(s),\mathcal{A}^*\phi\rangle ds\nonumber\\
       &  &+
         \int_{0}^{t}\int_\mathbb{X}
                              \langle G(s,\widetilde{X}^\epsilon(s-),v),\phi\rangle
              \epsilon \widetilde{N}^{\epsilon^{-1}\varphi_\epsilon}(dv,ds)\nonumber\\
       &   &+
         \int_{0}^{t}\int_\mathbb{X}
                              \langle G(s,\widetilde{X}^\epsilon(s),v),\phi\rangle
              (\varphi_\epsilon-1)\nu(dv)ds\\
        &  &+
           \int_{0}^{t}\langle\sqrt{\epsilon}\sigma(s,\widetilde{X}^\epsilon(s))d\beta_s,\phi\rangle\nonumber\\
       &  &+
           \int_{0}^{t}\langle\sigma(s,\widetilde{X}^\epsilon(s))\psi_\epsilon(s),\phi\rangle ds\nonumber,\ \forall\phi\in V.
\end{eqnarray}

Denote $\overline{M}^\epsilon(t)=\int_{0}^{t}\sqrt{\epsilon}\sigma(s,\widetilde{X}^\epsilon(s))d\beta_s$ and $M^\epsilon(t)=\int_{0}^{t}\int_\mathbb{X}
                               G(s,\widetilde{X}^\epsilon(s-),v)
              \epsilon \widetilde{N}^{\epsilon^{-1}\varphi_\epsilon}(dv,ds)$.
Since
\begin{eqnarray*}
     \mathbb{E}(\sup_{s\in[0,T]}\|\overline{M}^\epsilon(s)\|^2_H)
&\leq&
     \epsilon\mathbb{E}\int_0^T\|\sigma(s,\widetilde{X}^\epsilon(s))\|^2_{L_2(H)}ds\\
&\leq&
     \epsilon\int_0^TK(s)ds(\mathbb{E}\sup_{s\in[0,T]}\|\widetilde{X}^\epsilon(s)\|^2_H+1),
\end{eqnarray*}
and
\begin{eqnarray*}
   \mathbb{E}(\sup_{s\in[0,T]}\|M^\epsilon(s)\|^2_H)
&\leq&
    \mathbb{E}\Big(\int_{0}^{T}\int_\mathbb{X}\|G(s,\widetilde{X}^\epsilon(s-),v)\epsilon\|^2_H\epsilon^{-1}\varphi_\epsilon\nu(dv)ds\Big)\\
&=&
    \epsilon\mathbb{E}\Big(\int_{0}^{T}\int_\mathbb{X}\frac{\|G(s,\widetilde{X}^\epsilon(s-),v)\|^2_H}{(1+\|\widetilde{X}^\epsilon(s-)\|_H)^2}(1+\|\widetilde{X}^\epsilon(s-)\|_H)^2\varphi_\epsilon\nu(dv)ds\Big)\\
&\leq&
     \epsilon\mathbb{E}\Big((1+\sup_{s\in[0,T]}\|\widetilde{X}^\epsilon(s)\|_H)^2\int_{0}^{T}\int_\mathbb{X}\|G(s,v)\|^2_{0,H}\varphi_\epsilon\nu(dv)ds\Big),
\end{eqnarray*}
by Lemma \ref{Lemma-Condition-0,H-1,H} and (\ref{estimation-01}),$\overline{M}^\epsilon\Rightarrow 0$ and $M^\epsilon\Rightarrow 0$, as
$\epsilon\rightarrow0.$

Choose a subsequence along which $(\widetilde{X}^\epsilon,u_\epsilon,\overline{M}^\epsilon,M^\epsilon)$ converges to $(\widetilde{X},u,0,0)$ in distribution. By the Skorokhod representation theorem, we may assume $(\widetilde{X}^\epsilon,u_\epsilon,\overline{M}^\epsilon,M^\epsilon)\rightarrow(\widetilde{X},u,0,0)$ almost surely

Note that convergence in Skorokhod topology to a continuous limit is equivalent to the uniform convergence,
and $C([0,T],H)$ is a closed subset of $D([0,T],H)$. Hence
$$
\lim_{\epsilon\rightarrow0}\sup_{s\in[0,T]}\|M^\epsilon(s)\|^2_H=0,\ \mathbb{P}-a.s..
$$

Since $\widetilde{X}^\epsilon-M^\epsilon\in C([0,T],H)$ and
$\widetilde{X}^\epsilon-M^\epsilon\rightarrow\widetilde{X}$ almost surely in $D([0,T],H)$, we have $\widetilde{X}\in
C([0,T],H)$, and
$$\lim_{\epsilon\rightarrow0}\sup_{s\in[0,T]}\|\widetilde{X}^\epsilon(s)-\widetilde{X}(s)\|^2_H=0,\
\mathbb{P}-a.s..$$

Along the same lines of the proof of Theorem \ref{Main-thm-01} and Proposition \ref{Prop-1}, letting, $\epsilon\rightarrow 0$, we see that $\widetilde{X}$
must
solve
\begin{eqnarray}
   \langle\widetilde{X}(t),\phi\rangle
&=&
      \langle X_0,\phi\rangle
      -
      \int_{0}^{t}\langle \widetilde{X}(s),\mathcal{A}^*\phi\rangle ds
      +
      \int_0^t\langle\sigma(s,\widetilde{X}(s))\psi(s),\phi\rangle ds\nonumber\\
       &   &+
         \int_{0}^{t}\int_\mathbb{X}
                              \langle G(s,\widetilde{X}(s),v),\phi\rangle
              (\varphi-1)\nu(dv)ds,\ \forall\phi\in V.
\end{eqnarray}

By the uniqueness, this gives that $\widetilde{X}=\mathcal{G}^0(\int_0^\cdot\psi(s)ds,\nu^\varphi)$. Proof is completed.
\end{proof}

%

\newpage

\def\refname{ References}


\begin{thebibliography}{2}

\bibitem{A1}
A.de Acosta, \emph{A genegral non-convex large deviation result with applications to stochastic equations}, Probability Theorey and Related Fields, (2000), 483-521.

\bibitem{A2}
A.de Acosta, \emph{Large deviations for vector valued L\'e{}vy processes}, Stochastic Processes and Their Applications, 51 (1994), 75-115.

\bibitem{Aldous} D. Aldous, \emph{Stopping times and tightness}, Ann. Probab. 6 (1978), 335-340.

\bibitem{Dupuis-Ellis} P. Dupuis, R. Ellis, \emph{A weak Convergence Approach to the Theory of Large Deviations},
    Wiley, New-York, (1997).

\bibitem{Budhiraja-Dupuis-Maroulas.} A. Budhiraja, P.Dupuis and V. Maroulas, \emph{Variational representations for
    continuous time processes},
 Annales de l'Institut Henri Poincar\'e(B) Probabilit\'es Statistiques. 47(3) (2011), 725-747.

\bibitem{Budhiraja-Chen-Dupuis} A. Budhiraja, J. Chen and P.Dupuis, \emph{Large Deviations for Stochastic Partial
    Differential Equations Driven by a Poisson Random
Measure}, arXiv:1203.4020v1[math.PR].

\bibitem{CW}  C. Cardon-Weber, \emph{ Large deviations for a Burgers'-type SPDE},
Stochastic processes and their applications, 84 (1999), 53-70.

\bibitem{CM2}  F. Chenal and A. Millet, \emph{ Uniform large deviations for
parabolic SPDEs and applications}, Stochastic Process. Appl. 72(1997), 161-186.

\bibitem{CR}  S. Cerrai and M. R\"{o}ckner, \emph{ Large deviations for stochastic reaction-diffusion systems with multiplicative
noise and non-Lipschtiz reaction term}, Ann. Probab. 32 (2004), 1100-1139.

\bibitem{C} P. Chow, \emph{ Large deviation problem for some parabolic It\^o{} equations}, Communications on Pure and Applied Mathematics, XLV (1992), 97-120.

\bibitem{DZ}
A. Dembo and O. Zeitouni,\emph{ Large deviations techniques and applications}, Jones and Bartlett Publishers, Boston, London, (1992).

\bibitem{FOT}  M. Fukushima, Y. Oshima and M.Takeda,
\emph{ Dirichlet Forms and Symmetric Markov Processes},
De Gruyter, New York, (1994).

\bibitem{Ikeda-Watanabe} N. Ikeda and S. Watanabe, \emph{Stochastic Differential Equations and Diffusion Processes},
    North-Holland Publishing Co., Amsterdam, (1981).

\bibitem{Adam} A.Jakubowski. \emph{On the Skorohod topology}, Ann. Inst. Henri Poincar\'e, 22 (1986), 263-285.

\bibitem{S} R.B. Sowers, \emph{ Large deviations for a
reaction-diffusion equation with non-Gaussian perturbations}, Ann. Probab. 20 (1992), 504-537.

\bibitem{Swiech-Zabczyk} A. Swiech and J. Zabczyk, \emph{Large deviations for stochastic PDE with L\'evy noise}, J. Funct. Anal. 260 (2011), 674-723.

\bibitem{Temam} R. Temam, \emph{Navier-Stokes Equations Theory and Numerical Analysis}, North-Holland Pub. Company,
    Second revised edition,(1979).

\bibitem{Rockner-Zhang} M. R\"ockner and T, Zhang, \emph{Stochastic evolution equations of jump type: existence,
    uniqueness and large deviation principles},
Potential Anal. 26 (2007), 255-279.

\bibitem{Z} Tusheng Zhang, \emph{On small time asymptotics of diffusions on Hilbert spaces}, Ann.Probab. 28 (2002), 537-557.

\end{thebibliography}
\end{document}